\makeatletter \@addtoreset{equation}{section} \makeatother
\renewcommand\thetable{\thesection.\@arabic\c@table}
\theoremstyle{plain}
\newtheorem{theorem}{Theorem }[section]
\newtheorem{proposition}{Proposition}[section]
\newtheorem{lemma}{Lemma}[section]
\newtheorem{corollary}{Corollary}[section]
\theoremstyle{definition} \theoremstyle{remark}
\newtheorem{remark}[theorem]{Remark}
\newtheorem{example}[theorem]{Example}
\newtheorem{definition}[theorem]{Definition}
\newcommand{\vep}{\varepsilon}
\newcommand{\diam}{\operatorname{diam}}
\newcommand{\cP}{\mathcal{P}}
\begin{document}

\title[Specification and thermodynamical properties of semigroup actions]
{Specification and thermodynamical properties\\ of semigroup actions}

\author{Fagner B. Rodrigues}
\address{Departamento de Matem\'atica, Universidade Federal do Rio Grande do Sul, Brazil. \& CMUP, University of Porto, Portugal}
\email{fagnerbernardini@gmail.com}

\author{Paulo Varandas}
\address{Departamento de Matem\'atica, Universidade Federal da Bahia\\
  Av. Ademar de Barros s/n, 40170-110 Salvador, Brazil. \& CMUP, University of Porto, Portugal}
\email{paulo.varandas@ufba.br}

\date{\today}

\begin{abstract}
In the present paper we study the thermodynamical properties of
finitely generated continuous subgroup actions. We propose a notion of topological
entropy and pressure functions that does not depend on the growth rate of the semigroup
and introduce strong and orbital specification properties, under which, the semigroup
actions have positive topological entropy and all points are entropy points.
Moreover, we study the convergence and Lipschitz regularity of the pressure function
and obtain relations between topological entropy and exponential growth rate of periodic
points in the context of semigroups of expanding maps, obtaining a partial extension of the results
obtained by Ruelle for $\mathbb Z^d$-actions~\cite{Ruelle} .
The specification properties for semigroup actions and
the corresponding one for its generators and the action of push-forward maps
is also discussed.
\end{abstract}

\keywords{Group actions, specification properties, thermodynamical formalism, topological entropy, semigroups of expanding maps}
\maketitle

\section{\label{sec:level1}Introduction}

The thermodynamical formalism was brought from statistical mechanics to dynamical systems
by the pioneering works of Sinai, Ruelle and Bowen \cite{Bo75,BR75,Si72, RuelleS} in the mid seventies.
The correspondance between one-dimensional lattices and uniformly hyperbolic maps allowed
to translate and introduce several notions of Gibbs measures and equilibrium states in the realm
of dynamical systems. The present study of the thermodynamical formalism for non-uniformly
hyperbolic dynamical systems is now paralel to the development of a thermodynamical formalism
of gases with infinitely many states, a hard subject not yet completely understood. Moreover,
the notion of entropy constitutes one of the most important in the study of dynamical systems
(we refer the reader to Katok~\cite{Katok} and references therein for a survey on the state of the art).

An extension of the thermodynamical formalism for continuous finitely generated group actions
has revealed fundamental difficulties and the global description of the theory is still incomplete.
A first attempt was to consider continuous actions associated to finitely generated abelian groups.
The statistical mechanics of expansive $\mathbb{Z}^d$-actions
satisfying a specification property was studied by Ruelle~\cite{Ruelle}, where he introduced
a pressure function, defined on the space of continuous functions, and discussed its relations with measure
theoretical entropy and free energy.
The notion of specification was introduced in the seventies as a property of uniformly hyperbolic
basic pieces and became a characterization of complexity in dynamical systems.
The crucial fact that continuous $\mathbb{Z}^d$-actions
on compact spaces admit probability measures invariant by every continuous maps associated to
the group action, allowed Ruelle to prove a variational principle for the topological pressure and
to build equilibrium states as the class of pressure maximizing invariant probability measures.
This duality between topological and measure theoretical complexity of the dynamical system
is very fruitfull, e.g. was used later by Eizenberg, Kifer and Weiss~\cite{EKW}  to establish large
deviations principles to $\mathbb{Z}^d$-actions satisfying the specification property.
Other specification properties of interest have been introduced recently
(see e.g.~\cite{ChuLi,Var10}).

A unified approach to the thermodynamical formalism of continuous group actions is still unavailable,
while still few definitions of topological pressure exists and most of them unrelated.
Moreover the connection between topological and ergodic properties of group actions still
fails to provide a complete description the complexity of the dynamical system.
In many cases the existent definitions for topological entropy take into account either abelianity, amenability
or growth rate of the corresponding group. A non-extensive list of contributions by many authors
include important contributions by Ghys, Langevin, Walczak, Friedland, Lind, Schmidt,
Bufetov, Bi\'s, Urbanski, Ma, Wu, Miles, Ward, Chen, Zheng and Schneider among others
(see e.g. \cite{GLW,Fried,LiSc, Bufetov,Bis, BiU, MW, BiW,BisII, MW14, CZ14,Sc15} and references therein).

Our main goal here is to describe the topological aspects of the thermodynamical formalism for semigroup actions
for general finitely generated semigroup actions, where no commutativity or conditions on the
semigroup growth rate are required. Inspired by a notion of topological entropy of free semigroups
by Bufetov~\cite{Bufetov},
given a finitely generated semigroup $(G, G_1)$ where $G_1=\{id, g_1, \dots, g_m\}$ is a set of generators
we consider the coding
\begin{equation}\label{eq:coding}
\begin{array}{cccc}
\iota : & F_m & \to & G \\
	& i_n \dots i_1 & \mapsto & g_{i_n} \circ \dots \circ g_{i_1}
\end{array}
\end{equation}
where $F_m$ denotes the free semigroup with $m$ elements. This coding 
is injective if and only if $G$ is a free semigroup. Our thermodynamical approach for the semigroup action is to
average the complexity of each dynamics $g\in G$ with a weight corresponding to the size of $\iota^{-1}(g)$, that is,
how often a particular semigroup element $g$ arises by concatenation of the generators.

 E.g. if all generators commute and do not have finite order then $G \simeq \mathbb Z^m$ and every element in $G$ has the same weight, a property that will change
substantially in the case of semigroups of exponential growth with a non-trivial abelian subgroup.
This approach has the advantage of being independent of the semigroup growth rate, hence to propose a unified approach to
the study of semigroups with substantially different growth rates (see Section~\ref{section of examples} for examples) and the disadvantage to depend \emph{a priori} on the set of generators for the semigroup.
Inspired by several forms of the specification property for discrete time transformations with some
hyperbolicity (see e.g. \cite{S,PS,SSY10,OT11,Var10}), we also introduce some notions of strong and orbital specification properties
for continuous actions associated to finitely generated (not necessarily abelian) groups which are of independent interest.
In the particular case of semigroups $(G,G_1)$ of expanding maps our main contributions can be summarized as follows:
\begin{itemize}
\item[(a)] we introduce a notion of topological pressure $P_{top}((G,G_1),\varphi, X)$ which in independent
	of the semigroup growth rate;
\item[(b)] we prove that the orbital specification properties hold and, consequently, the local complexity at every neighborhood
	of any point coincides with the topological pressure of the dynamical system (see the notions of `entropy point' in
	Subsection~\ref{subsec:entropy});
\item[(c)] using expansiveness, we prove that topological pressure can be computed at a finite scale (omitting a limit in the
	original definition)
\item[(d)] we prove that the topological pressure function  $t\mapsto P_{top}((G,G_1),t \varphi, X)$ for H\"older continuous observables
	$\varphi$ is a uniform limit of $C^1$ functions, hence it is Lipschitz and differentiable Lebesgue almost everywhere; and
\item[(d)] the exponential mean growth of periodic points 
	is bounded from below by topological entropy $P_{top}((G,G_1),\varphi, X)$.
\end{itemize}
In \cite{Ruelle}, Ruelle  studied expansive $\mathbb Z^d$-actions with specification property and obtained that the topological
pressure function is smooth, existence and uniqueness of equilibrium states.
Here we obtained the Lebesgue almost everywhere differentiability of the pressure function for semigroups of expanding maps
that may have exponential growth.  To the best of our knowledge these are the first results after \cite{Ruelle}
(that considered $\mathbb Z^d$-actions) where there are partial results on the the differentiability of the topological pressure
function for group or semigroup actions.

Finally we observe that this is the first part of a program to describe the thermodynamical
properties of semigroup actions following the program of Ruelle~\cite{Ruelle}, and the construction of relevant stationary measures
that describe the ergodic theory of finitely generated semigroup actions of expanding maps will appear elsewhere~\cite{CRV}.
The relation between orbital specification properties for the group action
is also discussed and a class of examples of group actions is given where orbital specification properties
present a flavor of the non-uniform versions arising in non-uniformly hyperbolic dynamics.
In fact, we also study semigroups with non-expanding elements and compare these with the notions of entropy introduced by
Ruelle~\cite{Ruelle} and Ghys, Langevin, Walczak~\cite{GLW}.
For the convenience of the reader, we describe briefly the beginning of each section the
main results to be proved there. Except when we mention explicit otherwise,
we shall consider the context of semigroup actions and, in case the existence of inverse elements
is needed, we shall make precise mention to that fact.
We refer the reader to the statement of the main results
and to Section~\ref{section of examples} for some examples.

This paper is organized as follows. In Section~\ref{section2} we introduce both the strong specification property
and some orbital specification properties for finitely generated semigroups actions and discuss the relation
between these notions and the specification property for the generators. The connections between
specification properties for group actions, for the push-forward group actions
and hyperbolicity are also discussed.

In Section~\ref{sec:thermo} we introduce a notion of topological entropy and pressure for
continuous semigroup actions and study group actions that exhibit some forms of specification. In particular,
we prove that these have positive topological entropy and every point is an entropy point.

In Section~\ref{sec:expanding} we study the semigroup action induced by expanding maps. We prove that
these semigroups satisfy the previous notions of specification and that topological entropy is a lower bound
for the exponential growth rate of periodic orbits. We also deduce that the pressure function acting on the
space of H\"older continuous potentials is Lipschitz, hence almost everywhere differentiable along families
$t\varphi$ with $t\in\mathbb R$ and $\varphi$ H\"older continuous.

Finally, in Section~\ref{section of examples} we provide several examples where we discuss the specification properties and establish a comparison between some notions of topological entropy.

\section{Specification for a finitely generated semigroup actions}\label{section2}

In this section we introduce the notions of specification and orbital specification properties
for the context of group and semigroup actions. The specification property for the group action
implies that all generators satisfy the specification property (Lemma~\ref{esp of elements}) and also
that the push-forward group action satisfies the specification property
(Theorem~\ref{prop:G-esp pf}). Moreover, $C^1$-robust specification implies structural stability
(Corollary~\ref{prop:robust spec}).

\subsection{Strong specification property}

The specification property for a continuous map on a compact metric space $X$
was introduced by Bowen~\cite{Bowen}.
A continuous map $f: X \to X$ satisfies the \emph{specification property} if
for any $\delta>0$ there exists an integer $p(\delta)\geq 1$ such that
the following holds: for every $k\geq 1$, any points $x_1,\dots,
x_k$, and any sequence of positive integers $n_1, \dots, n_k$ and
$p_1, \dots, p_k$ with $p_i \geq p(\delta)$
there exists a point $x$ in $X$ such that
$$
\begin{array}{cc} d\Big(f^j(x),f^j(x_1)\Big) \leq \delta,
	 &\forall \,0\leq j \leq n_1
\end{array}
$$
and
$$
\begin{array}{cc}
d\Big(f^{j+n_1+p_1+\dots +n_{i-1}+p_{i-1}}(x) \;,\; f^j(x_i)\Big)
        \leq \delta &
\end{array}
$$
for every $2\leq i\leq k$ and $0\leq j\leq n_i$. This property means that pieces of orbits of $f$
can be $\delta$-shadowed by a individual orbit provided that the time lag between
each shadowing is larger than some prefixed time $p(\delta)$.

The notion of specification was extended to the context of continuous $\mathbb Z^d$-actions
on a compact metric space $X$ by Ruelle motivated by statistical mechanics.
Let $(\mathbb{Z}^d,+)$ be endowed with the distance
$
d_{\mathbb{Z}^d}(a,b)=\sum_{i=1}^p|a_i-b_i|.
$
Following ~\cite{Ruelle},
the group action $\mathbb Z^d \times X \to X$
satisfies the \emph{specification property} if for any
 $\delta>0$  there exists $p(\delta) > 0$ such that for any finite
families $(\Lambda_i)_{i\in\mathcal{I}}$, $(x_i)_{i\in\mathcal{I}}$ satisfying
if $i \not= j$, the distance of $\Lambda_i$, $\Lambda_j$
(as subsets of $\mathbb{Z}^d$) is $> p(\delta)$,
there is $x \in X$ such that
$d(m_ix,m_ix_i) < \delta$, for all $i \in \mathcal{I}$, and all $m_i\in\Lambda_i$.
This notion clearly extends to group actions associated to finitely generated abelian groups.

\subsubsection*{Specification property for groups and its generators}

In this article we shall address the specification properties and thermodynamical formalism
to deal both with finitely generated  group and  semigroup actions. For simplicity, we shall state our results
in the more general context of  semigroup actions whenever the results do not require the
existence of inverse elements.
More precisely, given a finitely generated semigroup $(G,\circ)$ with a finite set of generators
$G_1=\{id,g_1, g_2, \dots, g_m\}$
one can write
$
G=\bigcup_{n\in \mathbb N_0} G_n
$ where
$
G_0=id
$
and
\begin{equation}\label{eq:semi-group}
\underline g \in G_n \text{ if and only if }
	\underline g=g_{i_n} \dots g_{i_2} g_{i_1}
	\text{ with } g_{i_j} \in G_1
\end{equation}
(where we use $g_j \, g_i$ instead of $g_j\circ g_i$ for notational simplicity).
If, in addition, the elements of $G_1$ are invertible,
the finitely generated group $(G,\circ)$ is defined by
$
G=\bigcup_{n\in \mathbb N_0} G_n
$
where
$
G_0=id, \; G_1=\{id, g_1^{\pm}, g_2^{\pm}, \dots, g_m^{\pm}\}
$
and the elements $\underline g \in G_n$ are defined by \eqref{eq:semi-group}.
In both settings, $G_n$ consists of those group elements which are concatenations of at most
$n$ elements of $G_1$.
Since $id \in G_n$ then $(G_n)_{n\in \mathbb N}$ defines an increasing family
of subsets of $G$.
Moreover, $G$ is a finite semigroup if and only if $G_n$ is empty for every $n$ larger than the cardinality
of the group.
Given a semigroup $G$ we say $g\in G$ has \emph{finite order} if there exists $n\ge 1$ so that $g^n=id$. If the later property does not hold then an element $g\in G$ is said to have \emph{infinite order}.
We say that $\underline g=g_{i_n}\dots g_{i_1}$ is reduced if it is the smaller concatenations of elements of $G_1$ which generates $\underline g$.
Denote by $G_1^*=G_1 \setminus \{id\}$ and $G_n^*=\{\underline g=g_{i_n} \dots g_{i_2} g_{i_1}: g_{i_j}\in G_1^*\}$.
Using the coding function $\iota$ (recall \eqref{eq:coding}) observe $G_n^*=\iota (\{i_n \dots i_1 : i_j\in\{1, \dots, k\})$.

Motivated by applications by actions of  semigroups
we first introduce some generalizations of the previous specification property
for group actions. Let $(G,\circ)$ be a finitely generated group of maps on a compact metric space $X$ endowed  with the distance
$
d_G(h,g)= |h^{-1}g|
$
for $h,g\in G$,
where the right hand side tem is the order of the element $h^{-1}g$ and it is defined by
$|h^{-1}g|:= \inf \{n\ge 1 \colon h^{-1}g \in G_n\}.$
It is not difficult to check that it is a metric in the group $G$
and that $d_G(h,g)=n$ if and only if there exists $\underline g_n \in G_n$ so that
$g=h \, \underline g_n$. We are unaware of a natural notion of metric for semigroups.
The following notion extends of the specification property introduced by \cite{Ruelle} to more general
group actions.

\begin{definition}\label{def:g-specification}
Let $G$ be a finitely generated group, $X$ be a compact metric space
and let $T: G \times X \to X$ be a continuous action. We say that the group action $T$ has
the \emph{specification property} if for any $\delta>0$  there exists $p(\delta) > 0$ such
that for any finite  families $(\Lambda_i)_{i\in\mathcal{I}}$, $(x_i)_{i\in\mathcal{I}}$
so that the $d_G (\Lambda_i, \Lambda_j)>p(\delta)$ for every
$i \not= j$, then there is $x \in X$ such that $d(g_ix,g_ix_i) < \delta$ for every
$i \in \mathcal{I}$ and $g_i\in\Lambda_i$.
\end{definition}

The later notion implies on a strong topological indecomposability of the group action. Given
 a continuous action $T: G \times X \to X$ we say that $T$ is \emph{topologically transitive} if there exists
 a point $x\in X$ such that the orbit $O_G(x):=\{ \underline g(x) : \underline g \in G\}$ is dense in $X$.
We say that $T$ is \emph{topologically mixing} if for any open sets $A,B$ in $X$ there exists $N\ge 1$
such that for any $n\ge N$ there is $\underline g \in G$ with ${\underline g \in G_n^*}$ satisfying $\underline g(A) \cap B \neq \emptyset$. It is easy to check that any continuous action with the specification property is topologically mixing, hence topologically transitive.
For a survey on several mixing properties for group actions
we refer the reader to the survey \cite{CKN} and references therein.

Given a continuous action $T: G\times X \to X$ of a group $G$ on a compact metric space
$X$ we denote, by some abuse of notation, $g : X \to X$ to be the continuous map
$x \mapsto T(g,x)$.
Given $\underline g\in G$ we say that $x\in X$ is a \emph{fixed point} for $\underline g$ if $\underline g(x)=x$ and use the notation $x\in Fix(\underline g)$.
We say that $x \in M$ is a \emph{periodic point of period $n$} if there exists $\underline{g}_n \in G_n$ so that $\underline{g}_n(x) = x$. In other words, $x \in \bigcup_{\underline{g}_n \in G_n}Fix(\underline{g}_n )$.
We let $Per(G_n)$ denote the set of periodic points of period $n$ and set $Per(G)=\bigcup_{n\ge 1} Per(G_n)$.
If the tracing orbit in the specification property can be chosen periodic we will say that the
action satisfies the \emph{periodic specification} property.
It is not hard to check that an invertible  transformation $f: X \to X$ satisfies the specification
property if and only if the group action on $X$ associated to the group $G=\{ f^n : n\in \mathbb Z\}$ (isomorphic to $\mathbb Z$) satisfies the specification property.

The next lemma asserts that this specification property for group actions implies
all generators to satisfy the corresponding property.

\begin{lemma}\label{esp of elements}
Let $G$ be a finitely generated group with generators $G_1=\{g_1^{\pm}, g_2^{\pm},
\dots, g_k^{\pm}\}$. If the group action $T : G\times X \to X$ satisfies the specification
property then every $g\in G_1$ with infinite order has the specification property.
\end{lemma}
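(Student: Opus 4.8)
I need to prove Lemma \ref{esp of elements}: if the group action $T: G \times X \to X$ satisfies the specification property and $g \in G_1$ has infinite order, then the single transformation $g: X \to X$ satisfies Bowen's specification property.

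The key insight: the group action's specification is phrased in terms of families $(\Lambda_i)$ of subsets of $G$ with $d_G$-distance $> p(\delta)$. A single map $g$ generates a subgroup $\{g^n : n \in \mathbb{Z}\}$. Since $g$ has infinite order, this subgroup is isomorphic to $\mathbb{Z}$, and crucially $|g^n| = ?$... wait, I need to be careful. We have $|h| = \inf\{n \geq 1 : h \in G_n\}$, and $g \in G_1$, so $g^n \in G_n$, hence $|g^n| \leq n$. This inequality might be strict (e.g. if $g = g_1 g_2$ for other generators, but $g \in G_1$ so that's not quite it... actually $|g^n| \leq n$ always, but could be less if there's a shorter word). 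However, since $g$ has infinite order, $|g^n| \to \infty$ as $n \to \infty$ (otherwise infinitely many $g^n$ lie in some fixed $G_N$, which is finite, forcing $g^n = g^m$ for $n \neq m$, contradicting infinite order).

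**The translation.**

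Given $\delta > 0$, let $p = p(\delta)$ be from the group specification property. I claim $P(\delta) := p + 1$ works for $g$ (or some explicit bound derived from $p$ — I'll need to track the relationship between $|g^n|$ and $n$). Given orbit segments of $g$: points $x_1, \dots, x_k$, lengths $n_1, \dots, n_k$, gaps $P_i \geq P(\delta)$. I set $\Lambda_i = \{g^{j + N_{i-1}} : 0 \leq j \leq n_i\}$ where $N_{i-1} = n_1 + P_1 + \dots + n_{i-1} + P_{i-1}$, and $y_i = g^{N_{i-1}} x_i$ — no wait, simpler: set $x_i' = x_i$ and require the tracing point $x$ to satisfy $d(g^m x, g^{m - N_{i-1}} x_i) < \delta$ for $m \in [N_{i-1}, N_{i-1} + n_i]$. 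So take $\Lambda_i = \{N_{i-1}, N_{i-1}+1, \dots, N_{i-1}+n_i\} \subset \mathbb{Z} \cong \langle g \rangle$, with the prescribed points being $x_i$ "read from position $N_{i-1}$". Formally, in the group-action language: $\Lambda_i \subset G$ consists of the elements $g^{N_{i-1}+j}$, and we want $d(g^{N_{i-1}+j} x, \, ?) < \delta$. The natural choice: let the prescribed point associated to $\Lambda_i$ be $z_i := g^{-N_{i-1}} x_i$, so that $g^{N_{i-1}+j} z_i = g^j x_i$, which is exactly what we want to shadow. The distance condition $d_G(\Lambda_i, \Lambda_j) > p$ becomes: for $i < j$, the minimal $|g^{-(N_{i-1}+j_1)} g^{N_{j-1}+j_2}| = |g^{N_{j-1}+j_2 - N_{i-1} - j_1}|$ where the exponent is at least $P_i \geq P(\delta)$ (taking $j_1 = n_i$, $j_2 = 0$ gives exponent $P_i$). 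I then need $|g^\ell| > p$ whenever $\ell \geq P(\delta)$; since $|g^\ell| \to \infty$, I can choose $P(\delta)$ large enough for this.

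**The main obstacle.**

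The genuine difficulty is that $|g^\ell|$ need not be monotone in $\ell$ and need not equal $\ell$, so I cannot simply take $P(\delta) = p(\delta) + 1$; I must invoke $\lim_{\ell \to \infty} |g^\ell| = \infty$ (from infinite order + finiteness of each $G_N$) to choose $P(\delta) := \min\{L : |g^\ell| > p(\delta) \text{ for all } \ell \geq L\}$. A secondary subtlety: I must check the distance between $\Lambda_i$ and $\Lambda_j$ is controlled by the \emph{closest} pair of exponents, and since all gaps $P_r$ are $\geq P(\delta)$, any two exponents drawn from different $\Lambda$'s differ by at least $\min_r P_r \geq P(\delta)$ — this needs the observation that the blocks $[N_{i-1}, N_{i-1}+n_i]$ are separated along $\mathbb{Z}$ by the gaps. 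Then the point $x$ furnished by the group specification property satisfies exactly Bowen's condition for $g$ with waiting time $P(\delta)$, completing the proof. The argument uses only that $\langle g \rangle$ embeds in $G$ with $d_G$ restricting to a proper metric on this copy of $\mathbb{Z}$, so it is robust; I do not expect hidden complications beyond the non-monotonicity point above.
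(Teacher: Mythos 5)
Your proposal follows essentially the same route as the paper: encode the Bowen blocks as sets $\Lambda_i=\{g^{N_{i-1}+j}:0\le j\le n_i\}$, pull the prescribed points back to $g^{-N_{i-1}}x_i$, and apply the specification property of the group action. The one place where you go beyond the paper's write-up is the choice of the waiting time: the paper asserts directly that $d_G(\Lambda_i,\Lambda_j)\ge p(\delta)$ whenever the exponent gaps are $\ge p(\delta)$, which implicitly uses that $|g^\ell|$ is comparable to $\ell$ and can fail when $\langle g\rangle$ is distorted in $G$ (e.g.\ when a relation makes $g^\ell$ a short word); your observation that $|g^\ell|\to\infty$ (infinite order plus finiteness of each $G_N$) and the resulting enlargement of the constant to $P(\delta)$ is exactly the repair needed, and since Bowen's definition only requires \emph{some} gap constant, the conclusion is unaffected. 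The only detail to make explicit is that the same divergence holds for negative exponents --- equivalently that $d_G$ restricted to $\langle g\rangle$ is symmetric because $G_1$ contains inverses --- so that \emph{every} pair of exponents drawn from distinct blocks, in either order, yields $|g^{b-a}|>p(\delta)$.
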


\begin{proof}
Let $\delta>0$ be fixed and let $p(\delta)>0$ be given by the specification property for the group action $T$.
Take arbitrary $k\geq 1$, points $x_1,\dots, x_k$, and positive integers $n_1, \dots, n_k$ and
$p_1, \dots, p_k$ with $p_i \geq p(\delta)$. Since $g\in G_1$ is a generator then for any $i=1 \dots k$ the set
$$
\Lambda_i =
		\Big\{g^j \colon \;  \sum_{s=0}^{i-1} ( p_{s}+n_{s} )
		\le j \le   n_i + \sum_{s=0}^{i-1} ( p_{s}+n_{s} )
		\Big\}
$$
is finite and connected (assume $n_0=p_0=0$). Moreover, since $g$ has infinite order
it is not hard to check that
$d_G(\Lambda_i, \Lambda_j)\ge p(\delta)$ for any $i\neq j$. Let
$\bar{x}_j=g^{ -\sum_{s=0}^{j-1} p_{s}+n_{s}}(x_j)$, for $1\leq j\leq k$.
Thus, by the specification property there exists a
point $x \in X$ such that $d(h x, h \bar{x}_i) < \delta$, for all $i =1 \dots k$ and all $h \in\Lambda_i$ which are reduced
in this case to
$$
\begin{array}{cc}
d\Big(g^j(x),g^j(x_1)\Big) \leq \delta,
	 &\forall \,0\leq j \leq n_1
\end{array}
$$
and
$$
\begin{array}{cc}
d\Big(g^{j+n_1+p_1+\dots +n_{i-1}+p_{i-1}}(x) \;,\; g^j(x_i)\Big)
        \leq \delta &
\end{array}
$$
for every $2\leq i\leq k$ and $0\leq j\leq n_i$. This proves that the map $g$ has the specification property
and finishes the proof of the lemma.
\end{proof}

Let us mention that the existence of elements of generators of finite order is not an obstruction
for the group action to have the specification (e.g. the $\mathbb Z^2$-action on $\mathbb T^2=\mathbb R^2/\mathbb Z^2$ whose generators are a hyperbolic automorphism and the reflection on the real axis).
We refer the reader to Section~\ref{section of examples} for a simple example of a $\mathbb Z^2$-action for which the converse implication is not necessarily true.

\subsubsection*{The push-forward group action}

Given a compact metric space $X$ let $\cP(X)$ denote the space of probability measures on $X$,
endowed with the weak$^*$-topology. It is well known that $\cP(X)$ with the weak$^*$ topology is
a compact set. We recall that the weak$^*$-topology in  $\cP(X)$ is metrizable and a metric that generates the topology can be defined as follows. Given a countable dense set of continuous functions $(\phi_k)_{k\ge 1}$ in $C(X)$ and $\mu,\nu\in\cP(X)$ define
$$
d_\cP(\mu,\nu) = \sum_{k\ge 1} \frac1{2^k \|\phi_k\|} \left| \int \phi_k \,d\mu  - \int \phi_k \,d\nu \right|.
$$
For a continuous map $f: X\to X$, the space of $f$-invariant probability measures correspond to the fixed
points of the \emph{push-forward map} $f_\sharp: \cP(X) \to \cP(X)$, which is a continuous map.
For that reason the push-forward $f_\sharp$ reflects the ergodic theoretical aspects of $f$.
Moreover, the dynamics of $f$ is embedded in the one of $f_\sharp$ since it corresponds to the restriction
of $f_\sharp$ to the space $\{\delta_x : x\in X\}\subset \cP(X)$ of Dirac measures on $X$.
This motivates the study of specification properties for the group action of
the push-forward maps.

Given a finitely generated group $G$ and a continuous
group action $T : G \times X \to X$ let us denote by $T_{\sharp} : G \times \cP (X)
\to \cP(X)$ denote the group action defined by $g \cdot \nu = T(g,\cdot)_\sharp \, \nu$.
It is natural to ask wether the specification property can be inherited from this duality relation.

\begin{theorem}\label{prop:G-esp pf}
Let $G$ be a finitely generated group and $T : G \times X \to X$ be
a continuous group action satisfying the specification property. Then the group action
$T_{\sharp} : G \times \cP (X) \to \cP(X)$ satisfies the specification property.
\end{theorem}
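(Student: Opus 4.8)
The strategy is to transfer the specification data for $T$ on $X$ to specification data for $T_\sharp$ on $\cP(X)$ by approximating arbitrary target measures by finitely supported measures, tracing each atom with a single point via the specification property on $X$, and then pushing forward. Fix $\delta>0$ and let $(\phi_k)_{k\ge1}$ be the countable dense family of continuous functions used to define $d_\cP$. First I would choose $K=K(\delta)$ large enough that $\sum_{k>K} 2^{-k}\le \delta/4$, and then choose $\eta=\eta(\delta)>0$ so small that $|x-y|<\eta$ implies $|\phi_k(x)-\phi_k(y)|<\delta/(4\,\|\phi_k\|)$ for all $1\le k\le K$ (uniform continuity on the compact space $X$). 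Let $p(\eta)$ be the integer furnished by the specification property of $T$ on $X$ at scale $\eta$; this will be the $p$ associated to $\delta$ for $T_\sharp$.

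Next, given finite families $(\Lambda_i)_{i\in\cI}$ in $G$ with pairwise $d_G$-distance $>p(\eta)$, and target measures $(\nu_i)_{i\in\cI}\subset\cP(X)$, I would approximate each $\nu_i$ in the $d_\cP$ metric by a finitely supported measure $\mu_i=\sum_{\ell=1}^{N_i} a_{i,\ell}\,\delta_{y_{i,\ell}}$ (this is standard: the atomic measures with rational weights are $d_\cP$-dense in $\cP(X)$). I would require $d_\cP(\nu_i,\mu_i)<\delta/2$. For each fixed multi-index $\ell=(\ell_i)_{i\in\cI}$ in the product $\prod_i\{1,\dots,N_i\}$, I apply the specification property of $T$ on $X$ at scale $\eta$ to the points $(y_{i,\ell_i})_{i\in\cI}$ and the blocks $(\Lambda_i)_{i\in\cI}$: there exists $z_\ell\in X$ with $|g_i z_\ell - g_i y_{i,\ell_i}|<\eta$ for all $i\in\cI$ and all $g_i\in\Lambda_i$. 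Then I set $\mu := \sum_\ell \Big(\prod_{i\in\cI} a_{i,\ell_i}\Big)\,\delta_{z_\ell}\in\cP(X)$, a convex combination, and I claim $\mu$ is the measure that traces all the $\nu_i$: indeed, the push-forward of $\mu$ under $g_i$ (for $g_i\in\Lambda_i$) assigns to each $z_\ell$ mass $\prod_j a_{j,\ell_j}$, and grouping over the coordinates $\ell_j$, $j\ne i$, produces a measure $d_\cP$-close to $(g_i)_\sharp\mu_i = \sum_{\ell_i} a_{i,\ell_i}\,\delta_{g_i y_{i,\ell_i}}$, with the $d_\cP$-discrepancy controlled by $\eta$ and the tail bound $\sum_{k>K}2^{-k}$. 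Combining with $d_\cP(\nu_i,\mu_i)<\delta/2$ and the triangle inequality gives $d_\cP(g_i\cdot\mu, g_i\cdot\nu_i)<\delta$ for all $i\in\cI$ and all $g_i\in\Lambda_i$, as required.

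The one point requiring care is the bookkeeping that shows $(g_i)_\sharp\mu$ is $d_\cP$-close to $(g_i)_\sharp\mu_i$ uniformly over $g_i\in\Lambda_i$: here I test against each $\phi_k$ with $k\le K$, write $\int\phi_k\,d((g_i)_\sharp\mu) = \sum_\ell (\prod_j a_{j,\ell_j})\,\phi_k(g_i z_\ell)$, compare term by term with $\phi_k(g_i y_{i,\ell_i})$ using $|g_i z_\ell - g_i y_{i,\ell_i}|<\eta$ and the choice of $\eta$, and observe that the weights $\prod_j a_{j,\ell_j}$ sum (over $\ell_j$, $j\ne i$, with $\ell_i$ fixed) to $a_{i,\ell_i}$, so the combinatorial factors collapse correctly. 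The terms with $k>K$ are absorbed by the tail bound since $|\int\phi_k\,d\rho-\int\phi_k\,d\rho'|\le 2\|\phi_k\|$ trivially. This is the main (though routine) obstacle: the definition of specification for the group action only lets us trace \emph{points} of $X$, so we must quantize the measures first and then reassemble, and one has to check that a single $z_\ell$ simultaneously handles all $g_i\in\Lambda_i$ — which it does, precisely because specification on $X$ shadows the whole block $\Lambda_i$ at once. No use of inverses beyond what is implicit in $d_G$ is needed, so the argument is essentially the same if one only assumes the relevant one-sided structure.
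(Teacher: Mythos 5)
Your argument is correct and reaches the conclusion by a genuinely different route from the paper's. The paper first reduces, via Lemma~\ref{lemma:approx measure}, to uniform empirical measures $\mu_i'=\frac1N\sum_{j=1}^N\delta_{x_j^i}$ with a \emph{common} $N$ (obtained by rational approximation of the weights and a common denominator), and then applies specification \emph{once} to the diagonal product action $T^{(N)}$ on $X^N$, tracing the tuples $(x_1^i,\dots,x_N^i)$ by a single tuple $(x_1,\dots,x_N)$ and setting $\mu=\frac1N\sum_j\delta_{x_j}$. You instead keep arbitrary weights $a_{i,\ell}$, apply specification on $X$ once for each multi-index $\ell\in\prod_{i\in\cI}\{1,\dots,N_i\}$, and reassemble with the tensor-product weights $\prod_i a_{i,\ell_i}$; the marginal collapse $\sum_{(\ell_j)_{j\ne i}}\prod_j a_{j,\ell_j}=a_{i,\ell_i}$ is exactly what makes $(g_i)_\sharp\mu$ match $(g_i)_\sharp\mu_i$ atom by atom, and $\mu$ is indeed a probability measure since the product weights sum to $1$. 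Your version avoids the common-denominator step entirely, at the cost of $\prod_i N_i$ applications of specification instead of a single application on $X^N$; both produce a gap $p$ depending only on $\delta$, as required. Your explicit handling of the metric $d_\cP$ (truncating at $K$ and choosing $\eta$ by uniform continuity of $\phi_1,\dots,\phi_K$) is in fact more careful than the paper's, which tacitly identifies $\delta/2$-closeness of the shadowing atoms in $X$ with $d_\cP$-closeness of the resulting empirical measures.

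One small point to repair: push-forward by $g$ is not a $d_\cP$-contraction, so $d_\cP(\nu_i,\mu_i)<\delta/2$ does not by itself yield $d_\cP(g_i\cdot\nu_i,\,g_i\cdot\mu_i)<\delta/2$ for $g_i\in\Lambda_i$, which your final triangle inequality implicitly uses. Since each $\Lambda_i$ is finite and each $(g)_\sharp$ is continuous, it suffices to choose the atomic approximation $\mu_i$ fine enough that $d_\cP(g\cdot\nu_i,\,g\cdot\mu_i)<\delta/2$ for all $g\in\Lambda_i$ simultaneously — this is precisely how the paper phrases its approximation step. With that adjustment your proof is complete.
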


The following lemma will play an instrumental role in the proof of the theorem.

\begin{lemma}\label{lemma:approx measure}
Given probability measures $\mu_1,...,\mu_k\in\cP(X)$ and $\delta>0$, there are $N\in\mathbb{N}$
and points $(x_1^i,...,x^i_N)\in X^N$ such that the probabilities
$\mu_i'=\frac{1}{N}\sum_{j=1}^N\delta_{x_j^i}$ satisfy $d(\mu_ i,\mu_i')<\delta$
for $1\leq i\leq k$.
\end{lemma}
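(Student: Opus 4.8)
The plan is to approximate each $\mu_i$ in the weak$^*$-topology by an empirical measure supported on finitely many points, using the density of such measures in $\cP(X)$ together with a common denominator $N$ so that all the approximants have the same number of atoms. First I would recall that finitely supported rational-weight measures are weak$^*$-dense in $\cP(X)$: given $\mu_i$ and $\eta>0$, by compactness of $X$ we can pick a finite partition of $X$ into Borel sets $\{B^i_1,\dots,B^i_{r_i}\}$ of diameter less than some small $\rho$, choose representative points $y^i_\ell\in B^i_\ell$, and form $\nu_i=\sum_\ell \mu_i(B^i_\ell)\,\delta_{y^i_\ell}$. For any continuous $\phi$, uniform continuity gives $|\int\phi\,d\mu_i-\int\phi\,d\nu_i|\le \omega_\phi(\rho)$ where $\omega_\phi$ is the modulus of continuity, so $d_\cP(\mu_i,\nu_i)$ can be made smaller than $\delta/2$ by taking $\rho$ small (splitting the series $\sum_k 2^{-k}\|\phi_k\|^{-1}|\cdots|$ into a finite head, controlled by $\rho$, and an infinite tail, controlled by its convergence).

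Next I would pass from real weights $\mu_i(B^i_\ell)$ to rational weights with a common denominator. Approximate each $\mu_i(B^i_\ell)$ by $a^i_\ell/N$ with $a^i_\ell\in\mathbb{N}_0$ and $\sum_\ell a^i_\ell=N$, for a single large $N$ chosen uniformly in $i$; the error in each weight is $O(1/N)$, and since each partition has boundedly many pieces, the resulting measure $\mu_i'=\frac1N\sum_\ell a^i_\ell\,\delta_{y^i_\ell}$ satisfies $d_\cP(\nu_i,\mu_i')<\delta/2$ once $N$ is large enough. Listing each atom $y^i_\ell$ with multiplicity $a^i_\ell$ then exhibits $\mu_i'$ in the required form $\frac1N\sum_{j=1}^N\delta_{x^i_j}$. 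Combining the two approximations via the triangle inequality for $d_\cP$ gives $d_\cP(\mu_i,\mu_i')<\delta$ for every $1\le i\le k$, as claimed.

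The only mildly delicate points are the two quantitative estimates on $d_\cP$: bounding it by splitting the defining series into a finite initial segment plus a tail, and checking that the rounding of weights to a common denominator $N$ can be done simultaneously for all $k$ measures with a single $N$. Both are routine — the tail of $\sum_k 2^{-k}\|\phi_k\|^{-1}\cdot 2$ is summable, and a standard greedy (or Hardy–Littlewood-type) rounding argument produces nonnegative integers $a^i_\ell$ summing exactly to $N$ with $|a^i_\ell/N-\mu_i(B^i_\ell)|\le 1/N$. I do not anticipate a genuine obstacle here; the lemma is essentially the statement that the set of uniform empirical measures $\{\frac1N\sum_{j=1}^N\delta_{x_j}: N\ge 1,\ x_j\in X\}$ is dense in $\cP(X)$, packaged so that a fixed $N$ works for finitely many target measures at once.
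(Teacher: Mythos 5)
Your proposal is correct and follows essentially the same route as the paper: approximate each $\mu_i$ by a finitely supported atomic measure (the paper simply cites this density as well known, whereas you sketch the partition argument), then replace the weights by rationals with a single common denominator $N$ and list the atoms with multiplicity, concluding by the triangle inequality. Your explicit care that the rounded integer weights sum exactly to $N$ is a minor point the paper glosses over, but it does not constitute a different method.
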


\begin{proof}
It is well known that the finitely supported atomic measures are dense in $\cP(X)$. Then,
for $\delta>0$, there are $\bar{\mu}_1,...,\bar{\mu}_k$, with
$\bar\mu_j=\sum_{j=1}^M\alpha_i^j\delta_{x_i^j}\in\cP(X)$, so that
$d(\mu_k,\bar\mu_k)<\delta\slash2$. Let $p_i^j\slash q_i^j$ be
a positive rational such that $|\alpha_i^j-p_i^j\slash q_i^j|<\delta\slash 10$.
Let $N=\prod_{i,j=1}^Mq_i^j$ and $ N_k^j=p^j_k \prod_{i,j=1, i\not=k}^Mq_i^j$. Notice that
$|N_k^j / N -\alpha_k^j| <\delta\slash 10$ and
$$
\mu_j'=\frac{1}{N}\left(\sum_{i=1}^{N_1^j}\delta_{x_1^j}+\sum_{i=1}^{N_2^j}\delta_{x_2^j}+...+\sum_{i=1}^{N_k^j}\delta_{x_k^j}\right),
$$
satisfies $d(\mu'_j,\bar\mu_j)<\delta\slash2$, and by triangular inequality, $d(\mu_j,\mu_j')<\delta$.
\end{proof}

\begin{proof}[Proof of the Theorem~\ref{prop:G-esp pf}]

Assume that the action $T : G \times X \to X$ has the
specification property. Clearly, if $T$ satisfies the specification property then for any $N\ge 1$ the continuous
action $T^{(N)} : G \times X^N \to X^N$ on the product space $X^N$
endowed with the distance $d_N((x_i)_i, (y_i)_i)=\max_{1\le i \le N} d(x_i, y_i)$ and
given by $g \cdot (x_1, \dots , x_N) = (g x_1, \dots , g x_N)$ also satisfies the specification. In fact,
for any $\delta>0$ just take $p(\delta)>0$ as given by the specification property for $T$.

Let us proceed with the proof of the theorem. Take $\delta>0$ and let $p(\delta\slash2)$ be given by the specification property. Take $\mu_1,...,\mu_k\in\cP(X)$ and $\Lambda_1,\dots, \Lambda_k$ finite subsets
of $G$ with $d(\Lambda_i,\Lambda_j)>p(\delta\slash2)$.  Let $\mu'_i=\frac{1}{N}\sum_{j=1}^N\delta_{x_j^i}$,
such that $d(g \mu_i',g \mu_i)<\delta\slash2$ for all $g \in\Lambda_i$.
By considering the finite sequence $(x_1^i,...,x^i_N)_{i=1}^k\subset X^N$
and the sets $\Lambda_1,..\Lambda_k$, there exists a point $(x_1,...,x_N) \in X^N$ in the product space
such that
$$
d(g \cdot (x_1,...,x_N),g \cdot (x_1^i,...,x^i_N))<\frac{\delta}{2} \mbox{ for all } g \in\Lambda_i.
$$
It implies that the probability measure $\mu=\frac{1}{N}\sum_{j=1}^N\delta_{x_j}$
satisfies
$$
d( g \cdot \mu_i, g \cdot \mu)
	\leq d( g \cdot \mu_i',g \cdot \mu)
	+d( g \cdot \mu_i',g\cdot \mu_i)
	<\delta, \mbox{ for all } g\in\Lambda_i.
$$
This completes the proof of the theorem.
\end{proof}

The converse implication in the previous theorem is not immediate. In fact, given the specification property
for $T_\sharp$ and any specified pieces of orbit by $T_\sharp$ it is not clear that this can be shadowed by
the $T_\sharp$-orbit of a Dirac probability measure $\delta_x$.
Nevertheless this is indeed the case for the dynamics of continuous interval maps.

\begin{corollary}
Let $f$ be a continuous interval map. Then $f$ satisfies the specification
property if and only  if $f_\sharp$ satisfies the specification property.
\end{corollary}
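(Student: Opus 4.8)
The plan is to establish the nontrivial implication: assuming $f_\sharp$ has the specification property, deduce that $f$ does. The main tool is that for a continuous interval map, the specification property for $f$ is equivalent to $f$ being topologically mixing on a single interval (by the classical results of Blokh and others on interval dynamics), and in the interval setting transitivity/mixing is easy to detect. So first I would note that specification of $f_\sharp$ implies $f_\sharp$ is topologically mixing on $\cP(X)$, hence in particular topologically transitive. From the dense orbit of some $\mu\in\cP(X)$ under $f_\sharp$ I want to extract a point $x\in[0,1]$ whose $f$-orbit is dense; the key observation is that if $f_\sharp^n\mu$ approximates a Dirac mass $\delta_y$ in the weak$^*$-metric $d_\cP$, then a large fraction of the mass of $f_\sharp^n\mu$ must sit near $y$, and since $f_\sharp^n\mu$ is the push-forward of $\mu$, this forces $f^n$ to map a positive-$\mu$-measure set close to $y$. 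Iterating over a countable dense set of target Dirac measures and using a Baire-category / diagonal argument, one produces a point $x$ with dense $f$-orbit. Hence $f$ is topologically transitive on $[0,1]$.

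Next, I would upgrade transitivity to topological mixing. Again using that $f_\sharp$ is topologically mixing, pick small weak$^*$-balls around two Dirac masses $\delta_a$ and $\delta_b$; topological mixing of $f_\sharp$ gives $N$ such that for all $n\ge N$ some iterate of the ball around $\delta_a$ meets the ball around $\delta_b$. Translating back through the push-forward as above, this yields: for every $n\ge N$, $f^n$ sends a set of positive measure near $a$ into a small neighborhood of $b$, so $f^n(U)\cap V\ne\emptyset$ for the corresponding neighborhoods $U\ni a$, $V\ni b$. Since $a,b$ were arbitrary and the neighborhoods can be taken arbitrarily small, $f$ is topologically mixing. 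Then I invoke the interval-dynamics theorem that a topologically mixing continuous interval map has the specification property (this is where the one-dimensionality is essential — the same is false for general compact $X$), completing the backward direction.

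The forward direction — that specification of $f$ implies specification of $f_\sharp$ — is already covered by Theorem~\ref{prop:G-esp pf} applied to the group $G\cong\mathbb Z$ generated by $f$ (using the remark that an invertible $f$ has specification iff the $\mathbb Z$-action does); for non-invertible $f$ the same proof of Theorem~\ref{prop:G-esp pf} applies verbatim to the $\mathbb N$-action, or one argues directly via Lemma~\ref{lemma:approx measure}: approximate each $\mu_i$ by an empirical measure $\frac1N\sum_j\delta_{x_j^i}$, apply specification of $f$ on the product $[0,1]^N$ to shadow the finitely many orbit segments simultaneously, and push forward.

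The step I expect to be the main obstacle is the passage from "$f_\sharp^n\mu$ is weak$^*$-close to a Dirac mass" back to genuine control of the point dynamics of $f$: weak$^*$-closeness to $\delta_y$ only says most of the mass is near $y$, not all of it, so one has to be careful to track a definite positive-measure piece of $\mu$ and argue that its image clusters near $y$ — and then combine countably many such constraints without the intersection becoming empty. Making this quantitative (choosing the radii of the weak$^*$-balls as a function of the desired spatial resolution, and organizing the Baire argument so that the surviving set of points with dense orbit is nonempty) is the technical heart; everything else is either soft topological dynamics or a citation to the interval-mixing $\Rightarrow$ specification result.
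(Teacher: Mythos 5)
Your proof is correct and follows the same overall skeleton as the paper's: the forward direction via Theorem~\ref{prop:G-esp pf} (or its semigroup analogue), and the backward direction via the chain ``specification of $f_\sharp$ $\Rightarrow$ mixing of $f_\sharp$ $\Rightarrow$ mixing of $f$ $\Rightarrow$ specification of $f$'', the last step being Blokh's theorem for continuous interval maps. The one genuine difference is the middle link: the paper simply cites Bauer--Sigmund~\cite{SigBauer}, who proved that $f$ is topologically mixing if and only if $f_\sharp$ is, whereas you prove the (only needed, and easier) implication by hand, transferring mixing of $f_\sharp$ near Dirac masses back to $f$ via the observation that a measure $\mu$ weak$^*$-close to $\delta_a$ with $f_\sharp^n\mu$ weak$^*$-close to $\delta_b$ must give positive mass to $B(a,\varepsilon)\cap f^{-n}(B(b,\varepsilon))$. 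That argument is sound --- and note it is genuinely needed, since mixing does not pass to closed invariant subsets, so one cannot just restrict $f_\sharp$ to the copy of $X$ given by the Dirac embedding --- and it makes the step self-contained at the cost of a page of estimates with the metric $d_\cP$. Two minor remarks: your preliminary construction of a dense orbit via a Baire-category argument is superfluous, since transitivity is never used and the mixing transfer is a single finite positive-measure intersection; so the step you single out as the ``technical heart'' can be dispensed with entirely. On the other hand, your care about the non-invertibility of $f$ (Theorem~\ref{prop:G-esp pf} is stated for group actions) is well placed: the proof does go through verbatim for the $\mathbb N$-action generated by $f$, and the paper glosses over this point.
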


\begin{proof}
It follows from Theorem~\ref{prop:G-esp pf} that the specification property for $f$
implies the specification property for $f_\sharp$, and so we are reduced to prove the other implication.
First we observe that the specification property implies the topologically mixing one.
By~\cite{SigBauer}, $f$ is topologically mixing if and only if
$f_\sharp$ is topologically mixing. Moreover, Blokh~\cite{BL} proved that any continuous topologically
mixing interval map satisfies the specification property, thus these are equivalent properties for continuous interval maps. This proves the corollary.
\end{proof}

It is not clear to us if \cite{BL} can be extended to group actions, and so the previous equivalence
does not have immediate counterpart for group actions of continuous interval maps.

\subsection{Orbital specification properties}\label{section3}

In this subsection we introduce weaker notions of specification. In opposition to
the notion introduced in Definition~\ref{def:g-specification}, which takes into account the existence of
a metric in the group, the following orbital specification properties are most suitable for semigroups actions.
A first problem to define orbital specification properties is that group elements $g\in G$ may
have different representations as concatenation of the generators. For that reason one should explicitly mention
what is the `path', or concatenation of elements, that one is interested in tracing.

\begin{definition}\label{def:orbit-spec1}
We say that the continuous semigroup action $T: G\times X \to X$ associated to the finitely generated
semigroup $G$ satisfies the \emph{strong orbital specification property}
if for any $\vep>0$ there exists $p(\vep)>0$ such that for
any $\underline h_{p_j}\in G^*_{p_j}$ (with $p_j \ge p(\vep)$ for $1\leq j\leq k$)
any points $x_1, \dots, x_k \in X$ and any natural
numbers $n_1, \dots, n_k$, any semigroup elements
$\underline g_{n_j, j}= g_{i_{n_j}, j} \dots g_{i_2,j} \, g_{i_1,j} \in G_{n_j}$ ($j=1\dots k$) there exists $x\in X$
so that
$
d( \underline g_{{\ell}, 1} (x) \; , \; \underline g_{{\ell}, 1} (x_1) ) < \vep
$
for every $\ell =1 \dots n_1$ and
$$
d( \; \underline g_{{\ell}, j} \, \underline  h_{p_{j-1}} \, \dots \, \underline g_{{n_2}, 2} \, \underline h_{p_1} \, \underline g_{{n_1}, 1} (x) \; , \; \underline g_{{\ell}, j} (x_j) \;) <  \vep
$$
for every $j=2 \dots k$ and $\ell =1 \dots n_j$
(here $\underline g_{\ell, j} := g_{i_{\ell}, j} \dots g_{i_1,j}$).

\end{definition}

\begin{remark}\label{rmk:Subtil}
The previous notion demands that every `long word' semigroup element $h_{p_j}$ can be used to shadow
the pieces of orbits. Here, `long word' means that the element has at least one representation
that is obtained by concatenation of a large number ($\ge p_j$) of generators, the identity not included.  In the case of finitely
generated free semigroups
the representation of every element as a concatenation of generators is unique and it makes sense to notice
that the size $|h_{p_j}|$ of an element $h_{p_j}$ is well defined and coincides with $p_j$.
However, the later property holds for group actions if and only if $X$ is a unique point,
since in the case that $G$ is a group then $id \in G_{n}$ for every $n\ge 2$.
This is one of the reasons to choose $G_n^*$ instead of $G_n$.
\end{remark}

We also introduce a weaker notion of orbital specification for semigroups
inspired by some nonuniform versions for maps.

\begin{definition}\label{def:weak}
We say that the continuous semigroup action $T: G\times X \to X$ associated to the finitely generated
semigroup $G$ satisfies the \emph{weak orbital specification property}
if for any $\vep>0$ there exists $p(\vep)>0$ so that for any $p \ge p(\vep)$, there exists a set
$\tilde G_{ p}\subset G_{p}^*$ satisfying  $\lim_{p\to\infty}\frac{\sharp \tilde G_p}{\sharp G_p^*}=1$
and for which the following holds:
for any $h_{p_j}\in \tilde G_{ p_j}$ with $p_j\ge p(\vep)$, any points $x_1, \dots, x_k \in X$, any natural numbers $n_1, \dots, n_k$ and any concatenations
$\underline g_{n_j, j}= g_{i_{n_j}, j} \dots g_{i_2,j} \, g_{i_1,j} \in G_{n_j}$
with $1\leq j \leq k$ there exists $x\in X$ so that
$
d( \underline g_{{\ell}, 1} (x) \; , \; \underline g_{{\ell}, 1} (x_1) ) < \vep
$
for every $\ell =1 \dots n_1$ and
$$
d( \; \underline g_{{\ell}, j} \, \underline  h_{p_{j-1}} \, \dots \, \underline g_{{n_2}, 2} \, \underline h_{p_1} \, \underline g_{{n_1}, 1} (x) \; , \; \underline g_{{\ell}, j} (x_j) \;) <  \vep
$$
for every $j=2 \dots k$ and $\ell =1 \dots n_j$.
\end{definition}

We emphasize that the previous definitions 
are independent of the set of generators for $G$, hence these are properties intrinsic to the semigroup.
This definition weakens the later one by allowing a set of admissible elements (whose proportion increases
among all possible semigroup elements) for the shadowing.
It is not hard to check that the later notions do not depend on the set of generators for the semigroup.
Non-uniform versions of the previous orbital specification properties can be
defined in the same spirit as \cite{Ya09,OT11,Var10, PS, Thompson}, but we shall
not need or use this fact here.
In Section~\ref{section of examples} we provide examples satisfying the orbital specification property
but not the usual specification property. The following proposition is the counterpart of
Theorem~\ref{prop:G-esp pf} for orbital specification properties.

\begin{proposition}
Let $G$ be a finitely generated group. If a continuous group action $T : G \times X \to X$ satisfies
the strong (resp. weak) orbital specification property then the push-forward group action $T_{\sharp} : G \times \cP (X) \to \cP(X)$ satisfies the strong (resp. weak) orbital specification property.
\end{proposition}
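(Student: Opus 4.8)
The plan is to mirror the strategy used in the proof of Theorem~\ref{prop:G-esp pf}, which established the analogous statement for the unqualified specification property, and to observe that the extra bookkeeping coming from the ordered concatenations in Definitions~\ref{def:orbit-spec1} and~\ref{def:weak} is carried along essentially without change. First I would record, as in the proof of Theorem~\ref{prop:G-esp pf}, that if $T$ satisfies the strong (resp.\ weak) orbital specification property then for every $N\ge 1$ the diagonal action $T^{(N)}:G\times X^N\to X^N$, $g\cdot(x_1,\dots,x_N)=(gx_1,\dots,gx_N)$ on the product space equipped with $d_N((x_i)_i,(y_i)_i)=\max_i d(x_i,y_i)$ satisfies the same property, with the same function $p(\vep)$ and (in the weak case) the same sets $\tilde G_p$; this is immediate since the shadowing inequalities are uniform over the finitely many coordinates. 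Here it matters that the concatenations $\underline g_{n_j,j}$ and the connecting words $\underline h_{p_{j-1}}$ are fixed data that act identically on each coordinate, so the orbital structure is preserved under passing to $X^N$.

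Next, given $\vep>0$, set $p(\vep)$ (in the weak case, also the family $\tilde G_p$) from the orbital specification property of $T$; I would use scale $\vep/2$. Given measures $\mu_1,\dots,\mu_k\in\cP(X)$ and the prescribed data — connecting words $\underline h_{p_j}$ (with $p_j\ge p(\vep/2)$, and in the weak case $\underline h_{p_j}\in\tilde G_{p_j}$), points-to-shadow replaced by measures $\mu_j$, numbers $n_j$, concatenations $\underline g_{n_j,j}$ — I would first invoke Lemma~\ref{lemma:approx measure} to obtain a common $N$ and points $(x^i_1,\dots,x^i_N)\in X^N$ such that $\mu_i'=\frac1N\sum_{j=1}^N\delta_{x^i_j}$ satisfies $d_\cP(g\cdot\mu_i',g\cdot\mu_i)<\vep/2$ for every $g$ ranging over the (finitely many) semigroup elements $\underline g_{\ell,j}\,\underline h_{p_{j-1}}\cdots\underline h_{p_1}\underline g_{n_1,1}$ that appear in the shadowing conditions. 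Then apply the orbital specification property of $T^{(N)}$ to the points $(x^i_1,\dots,x^i_N)$, $i=1,\dots,k$, with exactly the same words $\underline h_{p_j}$, $\underline g_{n_j,j}$ and numbers $n_j$, producing $(x_1,\dots,x_N)\in X^N$ whose orbit $\vep/2$-traces the prescribed orbital pieces coordinatewise. Setting $\mu=\frac1N\sum_{j=1}^N\delta_{x_j}$, the coordinatewise $\vep/2$-closeness in $X^N$ pushes forward to $d_\cP(g\cdot\mu,g\cdot\mu_i')<\vep/2$ for the relevant $g$'s (since the metric $d_\cP$ on empirical measures built from $N$ atoms is controlled by the $d_N$-distance of the atom configurations, by the same estimate used in the proof of Theorem~\ref{prop:G-esp pf}), and the triangle inequality $d_\cP(g\cdot\mu_i,g\cdot\mu)\le d_\cP(g\cdot\mu_i,g\cdot\mu_i')+d_\cP(g\cdot\mu_i',g\cdot\mu)<\vep$ finishes the strong case; in the weak case one additionally notes that the admissible family $\tilde G_p$ for $T_\sharp$ may be taken to be the very same $\tilde G_p$ given by $T$, so the density condition $\lim_{p\to\infty}\sharp\tilde G_p/\sharp G_p^*=1$ is inherited verbatim.

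I do not expect a genuine obstacle here; the only point requiring a little care is the one already implicit in the proof of Theorem~\ref{prop:G-esp pf}, namely that the finitely supported empirical measures $\mu_i'$ obtained from Lemma~\ref{lemma:approx measure} must all be built on a \emph{common} number $N$ of atoms so that the diagonal action $T^{(N)}$ can be applied simultaneously to all $k$ configurations — this is exactly what Lemma~\ref{lemma:approx measure} delivers by taking a common denominator. The secondary technical item is making explicit the inequality relating $d_\cP\big(\frac1N\sum\delta_{x_j},\frac1N\sum\delta_{y_j}\big)$ to $\max_j d(x_j,y_j)$, which follows directly from the definition of $d_\cP$ via $|\int\phi_k\,d\mu-\int\phi_k\,d\mu'|\le\frac1N\sum_j|\phi_k(x_j)-\phi_k(y_j)|$ and uniform continuity of each $\phi_k$; this is routine and need not be spelled out beyond a reference to the corresponding step in the proof of Theorem~\ref{prop:G-esp pf}. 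Thus the proof is a direct adaptation, and I would present it in two short paragraphs: one reducing to the product action, and one combining Lemma~\ref{lemma:approx measure} with the orbital specification of $T^{(N)}$ and a triangle inequality, treating the strong and weak cases in parallel.
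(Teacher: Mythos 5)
Your proposal is correct and follows essentially the same route as the paper's proof: reduce to empirical measures with a common number $N$ of atoms via Lemma~\ref{lemma:approx measure}, apply the orbital specification property of the diagonal action $T^{(N)}$ on $X^N$ with the same words $\underline h_{p_j}$ and $\underline g_{n_j,j}$, and conclude by the triangle inequality, the weak case inheriting the same family $\tilde G_p$. If anything, your version is slightly more explicit about the $\vep/2$ bookkeeping and about why closeness of atom configurations in the product metric controls $d_\cP$ of the corresponding empirical measures, points the paper leaves implicit.
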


\begin{proof}
Since the proofs of the two claims in the proposition are similar we shall prove the first one with
detail and omit the other. By Lemma~\ref{lemma:approx measure}, it is enough to prove the proposition
for probabilities that lie on the set $\mathcal{M}_N(X)=\{\frac1N \sum_{\ell=1}^N \delta x_\ell : x_\ell \in X\}$,
for any $N\in\mathbb{N}$.
Observe that if $T$ satisfies the strong orbital specification property then the same property holds for
the induced action $T^{(N)}$ on the product space $X^N$.
Let $\delta>0$ and take $p(\delta)\in\mathbb{N}$ given by the strong orbital specification property
of the induced action on $X^N$.
Let $\mu_1,...,\mu_k\in\mathcal{M}_N(X)$
with $\mu_j=\frac1N \sum_{l=1}^N \delta_{x_l^j}$ and  $\underline{g}_{n_j,j} \in G_{n_j}$
($1\leq j \leq k$) be given.
If we consider $\bar x_j=(x_j^1,...,x_j^N)$,  for any $|\underline{h}_{p_j}| = p_j\geq p(\delta)$
there exists $\bar x=(x_1,...,x_N)\in X^N$ such that
$d(\underline{g}_{\ell,1}(x),\underline{g}_{\ell,1}(\bar x_1))<\delta$ for every $\ell=1,...,n_1$
and
$$
d(\underline{g}_{\ell,j}\underline{h}_{p_{j-1}} \dots
\underline{g}_{n_2,2}\underline{h}_{p_1}\underline{g}_{n_1,1}(\bar x)
,\underline{g}_{\ell,j}(\bar x_j))<\delta
$$
for every $j=2,...,k$ and $\ell=1,...,n_j$.
Let $\mu=\frac1N \sum_{l=1}^N \delta_{x_l}$.
In particular $\mu $ satisfies
$d(\underline{g}_{\ell,1} \cdot \mu \;,\; \underline{g}_{\ell,1} \cdot \mu_1)<\delta$ for every $\ell=1,...,n_1$
and
$$
d(\underline{g}_{\ell,j}\underline{h}_{p_{j-1}}... \underline{g}_{n_2,2}\underline{h}_{p_1}\underline{g}_{n_1,1}
	\cdot  \mu \;,\; \underline{g}_{\ell,j} \cdot  \mu_j)<\delta,
$$
for every $j=2,...,k$ and $\ell=1,...,n_j$, which finishes the proof of the proposition.
\end{proof}

\subsection{Specification and hyperbolicity}

The relation between specification properties, uniform hyperbolicity and structural stability has been
much studied in the last decades, a concept that we will recall briefly. The content of this subsection is
of independent interest and will not be used later on along the paper.
Given a $C^1$ diffeomorphism $f$ on a compact
Riemannian manifold $M$ and an $f$-invariant compact set $\Lambda\subset M$ (that is
$f(\Lambda)=\Lambda$) we say that $\Lambda$ is \emph{uniformly hyperbolic} if there exists
a $Df$-invariant splitting $T_\Lambda M= E^s \oplus E^u$ and constants $C>0$, $0<\lambda<1$ so that
$\|Df^n(x)\mid_{E^s_x}\| \le C\lambda^n$ and $\|(Df^n(x)\mid_{E^u_x})^{-1}\| \le C\lambda^n$
for every $x\in \Lambda$ and $n\ge 1$. If $\Lambda=M$ is a hyperbolic set for $f$ then $f$ is called
an \emph{Anosov} diffeomorphism.

Originally the notion of specification was introduced by Bowen~\cite{Bowen} for uniformly hyperbolic
dynamics but fails dramatically in the complement of uniform hyperbolicity (even
partially hyperbolic dynamical systems with period points of different index do not satisfy the
specification property, see~\cite{SVY13,SVY15} for more details).
On the other hand Sakai, Sumi and Yamamoto~\cite{SSY10} proved that if the specification property
holds in a $C^1$-open set of diffeomorphisms then the dynamical systems are Anosov. It is well know that
every $C^1$ Anosov diffeomorphism $f$ is \emph{structurally stable}, that is, there exists a $C^1$-open
neighborhood $\mathcal U$ of $f$ in $\text{Diff}^1(M)$ so that for every $g\in \mathcal U$ there is
an homeomorphism $h_g: M \to M$ satisfying $g\circ h_g = h_g \circ f$.
Thus the $C^1$-robust specification implies rigidity of the underlying dynamical systems.

The previous results can be extended for finitely generated group actions acting on a compact
Riemannian manifold $M$ in a more or less direct way as we now describe.
Let $G$ be a finitely generated subgroup of $\text{Diff}^1(M)$ with generators
$G_1=\{g_1^\pm, \dots, g_k^\pm\}$.
We will say that the group action $G \times M \to M$ is \emph{structurally stable} if all the generators
are structurally stable. In other words, there are
$C^1$-neighborhoods $\mathcal U_i$ of the generators $g_i$ ($1\le i \le k$) such that for any
choice $\tilde g_i\in \mathcal U_i$ there exists a homeomorphism $h_i$ such that $\tilde g_i \circ h_i
=h_i \circ g_i$.
In the case that $G$ is abelian one can require the conjugacies to coincide (c.f. definition of structural
stability by Sad~\cite{Sad}).
We say that the group action $T: G \times M \to M$ satisfies the \emph{$C^1$-robust specification property}
if there exists a $C^1$-neighborhood $\mathcal V$ of $T$ such that any $C^1$-action $\tilde T \in \mathcal V$
satisfies the specification property. As a byproduct of the previous results we deduce the following consequence:

\begin{corollary}\label{prop:robust spec}
Let $G$ be a finitely generated subgroup of $\text{Diff}^1(M)$ such that
group action $T: G \times M \to M$ satisfies the $C^1$-robust specification property.
Then every generator is an Anosov diffeomorphism and the group action is structurally stable.
\end{corollary}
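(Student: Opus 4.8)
The plan is to reduce the statement to the known single-diffeomorphism results quoted just above, namely that $C^1$-robust specification forces a diffeomorphism to be Anosov (Sakai--Sumi--Yamamoto \cite{SSY10}) and that Anosov diffeomorphisms are structurally stable. So the first step is to show that the $C^1$-robust specification property of the group action $T$ passes to each generator $g_i$ (and its inverse) as a $C^1$-robust specification property of that individual map. Here I would invoke Lemma~\ref{esp of elements}: if $T$ satisfies the specification property then every generator of infinite order satisfies the specification property. The point is that this implication is not just for $T$ itself but for every $C^1$-action in the robust neighborhood $\mathcal V$ of $T$. Concretely, given a $C^1$-neighborhood $\mathcal V$ of $T$ on which specification holds, and a generator $g_i$, I would choose a $C^1$-neighborhood $\mathcal U_i$ of $g_i$ in $\mathrm{Diff}^1(M)$ small enough that perturbing only the $i$-th generator inside $\mathcal U_i$ (and leaving the others fixed) produces an action still lying in $\mathcal V$; such $\mathcal U_i$ exists because the map sending a tuple of generators to the induced group action is continuous in the appropriate topology. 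Then for every $\tilde g_i\in\mathcal U_i$ the corresponding action satisfies specification, hence by Lemma~\ref{esp of elements} the map $\tilde g_i$ satisfies specification (note $\tilde g_i$ has infinite order for $\tilde g_i$ near $g_i$, since an Anosov-to-be map has infinitely many periodic orbits; alternatively one shrinks $\mathcal U_i$ to avoid finite order, or simply observes a map with specification on a space with more than one point cannot have finite order). Therefore $\tilde g_i$ has $C^1$-robust specification.

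The second step is a direct appeal: by Sakai--Sumi--Yamamoto \cite{SSY10}, a diffeomorphism with $C^1$-robust specification is Anosov, so each generator $g_i$ is an Anosov diffeomorphism. The third step is likewise a citation of the classical structural stability theorem for Anosov diffeomorphisms: each $g_i$ admits a $C^1$-neighborhood $\mathcal U_i'$ on which every $\tilde g_i$ is topologically conjugate to $g_i$ via some homeomorphism $h_i$. Taking $\mathcal U_i''=\mathcal U_i\cap\mathcal U_i'$ gives exactly the data in the definition of structural stability of the group action: $C^1$-neighborhoods of the generators on which conjugacies exist. In the abelian case one must additionally arrange the conjugacies to coincide; this follows from the uniqueness (up to the obvious ambiguity) of the conjugacy in structural stability of Anosov systems together with the fact that commuting Anosov diffeomorphisms have a common conjugacy to the perturbed commuting family — this is essentially Sad's observation \cite{Sad}, which I would cite rather than reprove.

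The main obstacle, and the only genuinely nontrivial point, is the first step: verifying that robustness of the \emph{joint} property descends to robustness of the property for each \emph{individual} generator. This requires that one can perturb a single generator within a controlled $C^1$-neighborhood while staying inside the prescribed neighborhood $\mathcal V$ of the whole action, which in turn depends on how the topology on group actions is set up (it is the product topology on the finite tuple of generators, so this is genuinely automatic once that is made explicit). A secondary subtlety is the finite-order hypothesis in Lemma~\ref{esp of elements}: one must ensure the perturbed generators have infinite order, which is immediate since any map satisfying the specification property on a space with more than one point has positive entropy and infinitely many periodic points, hence infinite order. With these remarks the corollary follows.
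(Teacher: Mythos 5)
Your proposal follows essentially the same route as the paper's own proof: reduce to the individual generators by applying Lemma~\ref{esp of elements} to every action in the robust neighbourhood $\mathcal V$, invoke Sakai--Sumi--Yamamoto \cite{SSY10} to conclude each generator is Anosov, and then cite structural stability of Anosov diffeomorphisms together with the paper's definition of structural stability for the action. You are in fact more careful than the published argument, which passes silently over the two points you flag explicitly (that perturbing a single generator keeps the action inside $\mathcal V$, and the infinite-order hypothesis of Lemma~\ref{esp of elements}).
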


\begin{proof}
Since the group action $T: G \times M \to M$ satisfies the $C^1$-robust specification property
there exists a $C^1$-neighborhood $\mathcal V$ of $T$ such that any $C^1$-action $\tilde T \in \mathcal V$
satisfies the specification property. Moreover, from Lemma~\ref{esp of elements},
any such $\tilde T$ can be identified with a group action associated to a subgroup $\tilde G$ of  $\text{Diff}^1(M)$ whose generators $\tilde G_1=\{ \tilde g_1^\pm, \dots, \tilde g_k^\pm\}$ satisfy the specification property. This proves that the generators $g_i \in \text{Diff}^1(M)$ satisfy the $C^1$-robust specification property and,  by \cite{SSY10}, are Anosov diffeomorphisms, hence structurally stable. This proves the corollary.
\end{proof}

The previous discussion raises the question of wether the $C^1$-smoothness assumption is
necessary in the previous characterization. For instance, one can ask if a homeomorphism
satisfying the specification property $C^0$-robustly has some form of hyperbolicity.
In the remaining of this subsection we shall address some comments on this problem taking
as a simple model the push-forward dynamics, which is continuous and acts on the compact
metric space of probability measures.
Roughly, we will look for some hyperbolicity of the push-forward dynamics assuming that it
has the specification property.
Clearly, if $f$ is a topologically mixing subshift of finite type then it satisfies the specification
property and so does $f_\sharp$. On the other hand, the set of $f$-invariant measures are (non-hyperbolic)
fixed points for $f_\sharp$ and, consequently, this map does not present global hyperbolicity.
For that reason we will focus on the fixed points for the continuous
map $f_\sharp$ acting on the compact metric space $\cP(X)$.
Given $\mu\in \cP(X)$ and $\vep>0$ we define the \emph{local stable set} $W_\vep^s(\mu)$ by
$$
W_\vep^s(\mu)
	:=\{\eta \in \mathcal U : d_\cP(f_\sharp^j (\mu), f_\sharp^j (\eta)) <\vep \; \text{for every } j\ge 0\}
$$
(the \emph{local unstable set} $W_\vep^u(\mu)$ is defined analogously with $f_\sharp$ above replaced
by $f_\sharp^{-1}$).
We say that $\mu\in \cP(X)$ is a \emph{hyperbolic fixed point} for $f_\sharp$ if it is a fixed point and
there exists $\vep>0$ and constants $C>0$ and $0<\lambda<1$ so that:
\begin{itemize}
\item[(i)] $d_\cP(f_\sharp^j (\mu), f_\sharp^j (\eta)) <C \lambda^j$
	for every  $j\ge 1$ and $\eta\in W_\vep^s(\mu)$
\item[(ii)] $d_\cP(f_\sharp^{-j} (\mu), f_\sharp^{-j} (\eta)) <C \lambda^j$
		for every  $j\ge 1$ and $\eta\in W_\vep^u(\mu)$
\end{itemize}
We say that the hyperbolic fixed point is of saddle type if both stable and unstable sets are non-trivial.
Since the specification implies the topologically mixing property then we will mostly be interested in
hyperbolic fixed points of saddle type for $f_\sharp$.
It follows from the definition that hyperbolic fixed points for $f_\sharp$ are isolated. The following properties follow from the definitions and Lemma~\ref{lemma:approx measure}:
\begin{enumerate}
\item $f_\sharp$ is an affine map, that is, $f_\sharp ( t \eta + s \mu)= t f_\sharp (\eta) + s f_\sharp ( \mu)$ for 	
	every  $t,s\ge 0$ with $t+s=1$ and $\eta,\mu\in \cP(X)$
\item $\mu$ is a isolated fixed point for $f_\sharp$ if and only if the set of $f$-invariant probability measures
	satisfies $\mathcal M_f(X)=\{\mu\}$ (i.e. $f$ is uniquely ergodic),
\item $\mathcal{M}_n(X)=\{\frac{1}{n}\sum_{i=1}^n\delta_{x_i} : x_i \in X\} \subset \mathcal P(X)$
	is a closed $f_\sharp$-invariant set, and
\item $\bigcup_{n\ge 1} \mathcal M_n(X)$ is a dense subset of $\cP(X)$.
\end{enumerate}
Therefore, to analyze the existence of hyperbolic fixed points of saddle type for $f_\sharp$
that satisfies the specification property we are reduced to the case that $f$ is uniquely ergodic.
If $f$ is a contraction on a compact metric space then Banach's fixed point theorem implies the existence
of a unique fixed point that is a global attractor and, consequently, the Dirac measure at the attractor
is the unique hyperbolic (attractor) fixed point for $f_\sharp$, which is incompatible with transitivity.
However, it is nowadays well known that $C^0$-generic maps have a dense set of
periodic points (see e.g.~\cite{KMa}) and, in particular, $C^0$-generic homeomorphisms $f$ are not
uniquely ergodic. In conclusion, there is no open set of homeomorphisms $f$ so that $f_\sharp$
has a unique hyperbolic fixed point of saddle type.

\section{Specification properties and the entropy of semigroup actions}\label{sec:thermo}

The notion of entropy is one of the most important in dynamical systems, either as a topological invariant
or as a measure of the chaoticity of the dynamical system. For that reason several notions of entropy and topological pressure have been introduced for group actions in an attempt to describe its dynamical
characteristics.
As discussed in the introduction, some of the previously introduced definitions take into account
the  growth rate of the (semi)group, that is, the growth of $|G_n|$ as $n$ increases (see e.g.~\cite{Bis}
and references therein). We refer the reader to ~\cite{R.I.Grigorchuk,Harpe} for a detailed description
about growth rates for groups and geometric group theory.
In this section we characterize entropy points of semigroup actions with specification
(Theorem~\ref{thm:entropypts}) and prove that these actions have positive topological entropy
(Theorems~\ref{thm:entropy} and ~\ref{thm:entropy2}).

\subsection{Entropy points}\label{subsec:entropy}

Let $X$ be a compact metric space and $G$ be a semigroup.
First we shall introduce the notion of dynamical balls. Given $\varepsilon>0$ and $\underline g :=g_{i_{n}} \dots g_{i_2} \, g_{i_1}\in G_n$ we define
the \emph{dynamical ball} $B(x,\underline g,\varepsilon)$ by
\begin{align}\label{eq:dynball}
B(x,\underline g,\varepsilon)\nonumber
	&:= B(x,g_{i_{n}} \dots g_{i_2} \, g_{i_1},\varepsilon)\\
	&= \Big\{y\in X: d( \underline g_j (y), \underline g_j (x) ) \le \varepsilon, \; \text{ for every } 0\le j \le n \Big\}
\end{align}
where, by some abuse of notation, we set $\underline g_j:=g_{i_{j}} \dots g_{i_2} \, g_{i_1}\in G_n$ for every $1\le j \le n-1$ and $\underline g_0=id$. We also assign a metric $d_{\underline g}$ on $X$ by setting
 \begin{equation}\label{eq:dg}
  d_{\underline g} (x_1, x_2)
 	:=   d_{g_{i_{n}} \dots g_{i_2} \, g_{i_1}} (x_1, x_2)
	=\max_{0\le j \le n } \, d(\underline{g}_j(x_1),\underline{g}_j(x_2)).
 \end{equation}
It is important to notice that here both the dynamical ball and metric are adapted to the underlying
concatenation of generators $g_{i_n} \dots g_{i_1}$ instead of the group element $\underline g$,
since the later one may have distinct representations.
For notational simplicity we shall use the condensed notations $B(x,\underline g,\varepsilon)$ and
$d_{\underline g }(\cdot,\cdot)$ when no confusion is possible.
In the case that $\underline g=f^n$ the later notions coincide with the usual
notion of dynamical ball $B_f(x,n,\vep)$ and dynamical distance $d_n(\cdot, \cdot)$ with respect to
the dynamical system $f$, respectively.

Now, we recall a notion of topological entropy introduced by Ghys, Langevin, Walczak \cite{GLW}
and the notion of entropy point introduced by  Bi\'s \cite{BisII}.
Two points $x,y$ in $X$ are \emph{$(n,\varepsilon)$-separated} by $G$ if there exists
$g\in G_n$ such that $d(g(x),g(y))\geq\varepsilon$.
Given $E\subset X$, let us denote by $s(n,\varepsilon, E)$ the maximal cardinality
of $(n,\varepsilon)$-separated set in $E$. The limit
\begin{equation}\label{def:GLW}
h((G,G_1),E)=\lim_{\varepsilon\to0}\limsup_{n\to\infty}\frac{1}{n}\log s(n,\varepsilon, E)
\end{equation}
is well defined by monotonicity on $\varepsilon$. The \emph{entropy of $(G,G_1)$} is defined
by the previous expression with $E=X$. This definition depends on the generators of $G$.
In this setting of a semigroup $G$ we define by
$
B_G(x,n,\varepsilon):=\bigcap_{\underline g= g_{i_n} \dots g_{i_1} \in G_n} B(x, \underline g, \varepsilon)
$
the \emph{dynamical ball for the semigroup $G$} associated to $x$, length $n$ and size $\varepsilon$
centered at $x$, where
the intersection is over all concatenations that lead to elements in $G_n$.
This corresponds to consider points that are $\varepsilon$-close along the orbit of $x$ by
all the trajectories arising from concatenations of generators.
We say that the finitely generated semigroup $(G,G_1)$
acting on a compact metric space $X$ admits an \emph{entropy point $x_0$} if for any open
neighbourhood $U$ of $x_0$ the equality
$$
h((G,G_1),\overline{U})=h((G,G_1),X)
$$
holds. Entropy points are those for which local neighborhoods reflect the complexity of the
entire dynamical system.
In \cite{BisII}, Bi\'s proved remarkably that any finitely generated group
$(G,G_1)$ acting on a compact metric space $X$ admits an entropy point $x_0$.

In what follows we consider a semigroup action by local homeomorphisms. Recall that
for any compact metric space $X$, a continuous self map $f : X \to X$ on  is called
a \emph{local homeomorphism} if for any $x\in M$ there exists an open neighborhood $V_x$ of
$x$ so that $f\mid_{V_x} : V_x \to f(V_x)$ is an homeomorphism.
We prove that the orbital specification property for continuous semigroup actions is enough to
prove that all points are entropy points. More precisely,

\begin{theorem}\label{thm:entropypts}
Let $G \times X \to X$ be a continuous finitely generated semigroup  action on a compact Riemanian
manifold $X$  so that every element $g\in G_1$ is a local homeomorphism.
 If the semigroup action satisfies the weak orbital specification
 property then every point of X is an entropy point.
\end{theorem}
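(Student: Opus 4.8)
The plan is to fix a point $x_0 \in X$ and an arbitrary open neighborhood $U$ of $x_0$, and to show $h((G,G_1),\overline U) = h((G,G_1),X)$; since the reverse inequality $h((G,G_1),\overline U) \le h((G,G_1),X)$ is automatic from monotonicity, the content is the lower bound $h((G,G_1),\overline U) \ge h((G,G_1),X)$. First I would unwind the definition of $h((G,G_1),X)$ via \eqref{def:GLW}: fix a small $\varepsilon>0$ and, for arbitrarily large $n$, select a maximal $(n,\varepsilon)$-separated set $E_n \subset X$ realizing (up to a subexponential factor) the growth $\frac1n\log s(n,\varepsilon,X)$. For each pair of distinct points $y,z \in E_n$ there is by definition some concatenation $\underline g \in G_n$ with $d(\underline g(y),\underline g(z)) \ge \varepsilon$; the goal is to manufacture, inside $\overline U$, a set of roughly $\# E_n$ points that remain $(n',\varepsilon/4)$-separated by $G$ for a length $n'$ not much larger than $n$, so that the exponential growth rate is preserved in the limit.

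\textbf{Key steps.} The mechanism is the weak orbital specification property. Apply it with $\vep/4$ in place of $\vep$ to obtain $p = p(\vep/4)$ and, for each large $q \ge p$, the set $\tilde G_q \subset G_q^*$ with $\#\tilde G_q / \# G_q^* \to 1$. Choose one \emph{fixed} connecting word $\underline h_p \in \tilde G_p$ of length $p$ that maps some point near $x_0$ back towards $x_0$ — more precisely, since every $g\in G_1$ is a local homeomorphism, each concatenation in $G_p$ is an open map, so $\underline h_p^{-1}(U) \cap U$ contains an open set, and we can arrange the shadowing orbit to start and effectively return to $\overline U$. Then for each $y \in E_n$ I would invoke weak orbital specification with $k=2$, shadowing pieces: the trivial piece at $x_0$ of length (say) $1$, the connector $\underline h_p$, and then the piece of the orbit of $y$ of length $n$ along the specific concatenation $\underline g_{n,y}$ witnessing separation — wait, more carefully, one needs to shadow $y$ along \emph{every} concatenation simultaneously if one wants separation in the semigroup sense. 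The cleanest route: apply the specification to shadow $y$ along the single concatenation $\underline g_{n,y}$ that separates $y$ from each other point of $E_n$; a counting argument (pigeonhole over the finitely many concatenation-types, or a refinement of $E_n$ into subsets sharing a common separating direction) reduces to the case where one concatenation works for many pairs. The resulting shadowing points $x_y$ lie within $\vep/4$ of $x_0$ along the initial piece, hence lie in $\overline U$ for $\vep$ small, and satisfy $d(\underline g_{n,y}\,\underline h_p(x_y),\, \underline g_{n,y}(y)) < \vep/4$, so distinct $x_y, x_{y'}$ are $(n+p+1,\vep/2)$-separated by $G$. Finally take $\varepsilon \to 0$: since $p = p(\vep/4)$ and the ``bad'' fraction $\#(G_p^* \setminus \tilde G_p)/\#G_p^* \to 0$ cost only a bounded shift $n \mapsto n+p+1$ in the length, one gets $h((G,G_1),\overline U) \ge \lim_{\varepsilon\to0}\limsup_n \frac{1}{n+p(\vep/4)+1}\log s(n,\varepsilon,X) = h((G,G_1),X)$.

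\textbf{Main obstacle.} The delicate point is reconciling the GLW separation notion (``separated if \emph{some} $\underline g\in G_n$ separates'') with what weak orbital specification delivers (shadowing along \emph{prescribed} concatenations). Producing genuinely $(n',\varepsilon')$-separated points in $\overline U$ requires that for each pair $x_y \ne x_{y'}$ there be a concatenation in $G_{n'}$ witnessing distance $\ge \varepsilon'$; the construction naturally only controls the shadowing along the single witness concatenation $\underline g_{n,y}$ attached to $y$, which need not be the witness for $y'$. The fix is a pigeonhole/refinement argument: partition $E_n$ according to (a coarse $\varepsilon/10$-discretization of) the separating behavior, pass to a subset of comparable cardinality on which a single concatenation separates all pairs, and only then run the specification argument — this loses at most a subexponential factor and so does not affect the entropy. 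A secondary, more technical, obstacle is ensuring the shadowing orbit actually returns into $\overline U$: this is where local invertibility of the generators is used, to guarantee that $G_p$-images of small neighborhoods of $x_0$ still meet $U$, so that the ``return word'' $\underline h_p$ can be chosen inside $\tilde G_p$ (using that the complement of $\tilde G_p$ in $G_p^*$ is a vanishing fraction) compatibly with the requirement that $x_y \in \overline U$.
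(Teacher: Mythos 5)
Your overall skeleton --- transporting a maximal separated set into the prescribed neighbourhood by shadowing a trivial initial piece anchored in $U$, then a connecting word $\underline h_p\in\tilde G_p$, then the orbit pieces, at the cost of a time shift $p(\vep/4)$ and a fixed factor in the scale --- is exactly the paper's argument, and you correctly isolate the crux: GLW-separation of $E_n$ only provides, for each \emph{pair} $y\neq y'$, some pair-dependent word $\underline g^*_{y,y'}\in G_n$ with $d(\underline g^*(y),\underline g^*(y'))\ge\vep$, whereas the orbital specification shadows along one prescribed concatenation per point. The problem is that your proposed repair --- pigeonhole/refinement to a subset of $E_n$ of ``comparable cardinality'' on which a \emph{single} concatenation separates all pairs --- cannot work. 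This is a multicolour Ramsey problem: the pairs of $E_n$ are coloured by the $\le m^n$ elements of $G_n^*$, $\# E_n\approx e^{nh}$, and the largest monochromatic clique one can guarantee has size of order $\log(\# E_n)/\log(m^n)=h/\log m$, a constant independent of $n$. The refinement therefore loses essentially everything, not a subexponential factor, and the entropy estimate collapses. (Discretizing ``the separating behaviour'' at scale $\vep/10$ does not help: the separating behaviour of one point relative to all the others already ranges over exponentially many words.)

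The paper avoids any single-concatenation reduction. It keeps the pair-dependent separating words and works instead with the semigroup dynamical balls $B_G(x,n,\delta)=\bigcap_{\underline g\in G_n}B(x,\underline g,\delta)$: GLW-separation of $E$ at scale $\vep$ makes the balls $B_G(x_i,n,\vep/4)$ pairwise disjoint with no refinement at all, and if the transported point $w_i\in V$ satisfies $\underline h(w_i)\in B_G(x_i,n,\vep/4)$ --- i.e.\ shadows $x_i$ along \emph{all} concatenations of length $n$ simultaneously --- then for each pair $i\neq i'$ the word $\underline g^*_{i,i'}\,\underline h\in G_{n+p}$ witnesses $(n+p,\vep/4)$-separation of $w_i,w_{i'}$, which is precisely what the count $s(n+p(\vep/4),\vep/4,\overline V)$ requires. (The local homeomorphism hypothesis is used to get a uniform modulus $C_\vep$ for the inverse branches of the finitely many words in $G^*_{p(\vep)}$, so that $\underline h^{-1}(B(x_i,\underline g,\vep/4))$ has small diameter and lies inside $V$; the paper also first invokes the theorem of Bi\'s to produce one entropy point as the source of the separated set, though taking $E_n\subset X$ as you do is equally good.) The idea missing from your write-up is this simultaneous, all-concatenation shadowing into $B_G$, which is what allows the pair-dependent witnesses to survive the transport; admittedly the paper derives it rather tersely from the stated specification property, but with only the Ramsey-type refinement in its place your proof does not close.
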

\begin{proof}
First we notice that following the proof of \cite[Theorem~2.5]{BisII} \emph{ipsis literis} we get the
existence of an entropy point  $x_0\in X$ for any finitely generated semigroup of continuous maps on $X$
(the proof does not require invertibility). Hence, for any open
neighborhood $U$ of $x_0$ it holds that  $h((G,G_1),X)=h((G,G_1),\overline U).$
Let $\zeta>0$ be arbitrary and take $\varepsilon_0=\varepsilon_0(\zeta)>0$ such that
$$
\limsup_{n\to\infty}\frac{1}{n}\log s(n,\varepsilon, \overline U)
	\ge h((G,G_1),X) -\zeta
$$
for every $0<\varepsilon \le \varepsilon_0$.

Given any $z\in X$ and $V$ any open neighborhood of $z$ we claim that
$h((G,G_1),\overline{V})=h((G,G_1),X)$.
Fix $0<\varepsilon \le \varepsilon_0$ let $p({\varepsilon})\ge 1$ be given by
the strong orbital specification property. Since there are finitely many elements in $G_{p(\varepsilon)}$,
finitely many of its concatenations and the local inverse branches of elements $\underline g : X\to X$
are uniformly continuous there exists a uniform constant $C_{\varepsilon}>0$
(that tends to zero as $\varepsilon \to 0$) so that
$\diam (\underline h^{-1}(B(y,\varepsilon))) \le C_{\varepsilon}$ for every
$\underline h \in G_{p(\varepsilon)}$ and $y\in X$.
Take $n\ge 1$ arbitrary, let $E=\{x_1,...,x_l\}\subset \overline{U}$ be a maximal
$(n,\varepsilon, \overline{U})$-separated set and consider the open set $W\subset V$
defined by the set of points $y\in V$ so that $d(y,\partial V)>C_{\varepsilon_0}$. Assume
that $0< \varepsilon \ll \varepsilon_0$ satisfies $\varepsilon + C_\varepsilon<C_{\varepsilon_0}$.

Let $\underline{g}:=g_{i_n} \dots g_{i_1} \in G_n$ be fixed.
Given a maximal $(\varepsilon, \overline{W})$-separated set $F=\{z_1,...,z_m\}$,
by the weak specification property there exists
 $\underline h=h_{i_{p(\varepsilon)}} \dots h_{i_1} \in G^*_{p(\frac{\varepsilon}{4})}$ so that
 for any $x_i\in E$ and  $z_j\in F$, there exists
 $
  y_i^j \in B( z_j, \frac{\varepsilon}4 ) \cap \underline h^{-1}( B(x_i, \underline g, \frac{\varepsilon}4 )).
 $
Since $\diam (\underline h^{-1}(B(x_i,\frac{\varepsilon}{4}))) \le C_{\frac{\varepsilon}{4}}$,
this implies that $d( \underline h^{-1}(B(x_i,\underline g, \frac{\varepsilon}{4})), \partial V ) \ge C_{\varepsilon_0}- \frac{\varepsilon}{4} - C_{\frac{\varepsilon}{4}}>0$, provided that $\varepsilon \ll \varepsilon_0$.
Thus
 $$
 \underline h^{-1} \big( B_G(x_i, n, \frac{\varepsilon}4 )\big)
 	\subset
	 \underline h^{-1} \big( B(x_i, \underline g, \frac{\varepsilon}4 )\big)
	 	\subset V
		\text{ \; for every $i$.}
 $$
By construction, the dynamical balls $(B_G(x_i, n, \frac{\varepsilon}4))_{i=1 \dots l}$
are pairwise disjoint and consequently the number of $(n+ p(\frac{\varepsilon}4), \frac{\varepsilon}4)$-separated points in $\overline V$ is at least $s(n,\varepsilon, \overline U)$.
In other words, $s \big( n+p\left(\frac{\varepsilon}{4}\right),\frac{\varepsilon}{4}, \overline{V} \big)
\ge s(n,\varepsilon, \overline{U})$
 and, consequently,
\begin{align*}
    \limsup_{n\to\infty}\frac{1}{n+p\left(\frac{\varepsilon}{4}\right)}
    		\log s \big( n+p\left(\frac{\varepsilon}{4}\right),\frac{\varepsilon}{4}, \overline{V} \big)
	& \ge \limsup_{n\to\infty} \frac{1}{n+p\left(\frac{\varepsilon}{4}\right)}\log s(n,\varepsilon, \overline{U}) \\
	& = \limsup_{n\to\infty} \frac{1}{n}\log s(n,\varepsilon, \overline{U}).
\end{align*}
The last inequalities show that $h((G,G_1),X) \ge h((G,G_1),\overline V) \ge h((G,G_1),X) -\zeta$.
Since $\zeta$ was chosen arbitrary this completes the proof of the theorem.
\end{proof}

The previous result indicates that the specification properties are powerfull tools to prove
the local complexity of semigroup actions. Observe that the previous result clearly
applies for individual transformations.

We now use the notion of topological entropy introduced in~\cite{Bufetov},
which measures the mean cardinality of separated points among possible trajectories generated
by the semigroup. Although one can expect that most finitely generated semigroups are free and so to
have exponential growth (c.f. proof of Proposition 4.5 by Ghys~\cite{Ghys} implying that for a Baire generic
set of pairs of homeomorphisms the generated group is a free group on two elements) the notion of
average entropy that we consider seems suitable for wider range of semigroups.

Let $E\subset X$ be a compact set. Given $\underline g= g_{i_n} \dots g_{i_1} \in G_n$, we say a set $K \subset E$ is \emph{$(\underline g, n, \varepsilon)$-separated set} if $d_{\underline g}(x_1,x_2) > \varepsilon$
 for any distinct $x_1,x_2\in K$. When no confusion is possible with the notation for the concatenation
 of semigroup elements, the maximum  cardinality of a
 $(\underline g,\varepsilon, n)$-separated sets of $X$ will
 be denoted by $s(\underline g, n,E, \varepsilon)$. We now recall the notion of topological entropy introduced by
Bufetov~\cite{Bufetov}.

\begin{definition}\label{def:entropyB}
Given a compact set $E\subset X$,
we define
\begin{equation}\label{eq:entropiaB}
h_{top}((G,G_1),E)
	=\lim_{\varepsilon\to 0} \limsup_{n\to\infty} \frac1n \log Z_n((G,G_1),E, \varepsilon),
\end{equation}
where
\begin{equation}\label{eq:Zn}
Z_n((G,G_1),E, \varepsilon)
	=\frac{1}{m^n}\sum_{{\underline g \in G_n^*}} s(\underline g,n, E, \varepsilon),
\end{equation}
where the sum is taken over all concatenation $\underline g$ of $n$-elements of
$G_1 \setminus \{id \}$ and $m = |G_1 \setminus \{id \}|$. The
\emph{topological entropy $h_{top}((G,G_1),X)$} is defined for $E=X$.
\\
\end{definition}

In the case that $E=X$, for simplicity reasons, we shall use simply the notations
$s(\underline g, n,\varepsilon)$ and $Z_n((G,G_1),\varepsilon)$.
It is easy to check that $h_{top}((G,G_1),X) \le h((G,G_1),X)$.
Moreover, this notion of topological entropy corresponds to the exponential growth rate
of the average cardinality of maximal separated sets by individual dynamical systems $\underline g$.
This average is taken over elements that are, roughly, in the ``ball of radius $n$ in the
semigroup $G$", corresponding
to $G_n$.
Notice that for any finite semigroup $G$, every element $g\in G$ has finite order.  In this special case, we notice that every continuous map in the generated semigroup action has zero topological entropy, which is also coherent with the definition of entropy presented in \eqref{def:GLW}.

In this context, and similarly to before, we say that $x\in X$ is an \emph{entropy point} if for
any neighborhood $U$ of $x$ one has $h_{top}((G,G_1),\overline{U}) = h_{top}((G,G_1),X)$. Our next
theorem asserts that, under the (crucial) strong orbital specification property all points are also entropy
points for this notion of entropy. More precisely,

\begin{theorem}\label{thm:entropypts2}
Let $G \times X \to X$ be a continuous finitely generated semigroup  action on a compact Riemanian
manifold $X$  so that every element $g\in G_1$ is a local homeomorphism.
 If the semigroup action satisfies the strong orbital specification then every point is an entropy point.
\end{theorem}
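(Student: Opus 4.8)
The plan is to mimic the proof of Theorem~\ref{thm:entropypts}, but replacing the GLW-style separated-set counting with the averaged counting $Z_n((G,G_1),E,\varepsilon)$ from Definition~\ref{def:entropyB}, and using the \emph{strong} orbital specification property (so that the connecting word $\underline h$ may be chosen freely, in particular so it can be chosen to be a sub-word of the concatenations we are averaging over). As in the previous proof, we first invoke \cite[Theorem~2.5]{BisII}-type reasoning (which only needs continuity, not invertibility) to produce one entropy point $x_0$, so that $h_{top}((G,G_1),\overline U)=h_{top}((G,G_1),X)$ for every neighbourhood $U$ of $x_0$. Fixing $\zeta>0$, we pick $\varepsilon_0$ so that $\limsup_n \frac1n \log Z_n((G,G_1),\overline U,\varepsilon)\ge h_{top}((G,G_1),X)-\zeta$ for all $0<\varepsilon\le\varepsilon_0$. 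Then, given an arbitrary $z\in X$ and a neighbourhood $V$ of $z$, we must show $h_{top}((G,G_1),\overline V)\ge h_{top}((G,G_1),X)-\zeta$.

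The core step: fix $0<\varepsilon\ll\varepsilon_0$, let $p=p(\varepsilon/4)$ be given by strong orbital specification, and let $C_\varepsilon$ be the uniform modulus controlling the diameter of local inverse branches of elements of $G_p$ (this uses that $G_1$ consists of local homeomorphisms, that $G_p$ is finite, and uniform continuity, exactly as before). Shrink $V$ to $W=\{y\in V:\dist(y,\partial V)>C_{\varepsilon_0}\}$. Now fix $n$ and, \emph{for each} concatenation $\underline g=g_{i_n}\cdots g_{i_1}\in G_n^*$, let $E_{\underline g}\subset\overline U$ be a maximal $(\underline g,n,\varepsilon)$-separated set, so $\#E_{\underline g}=s(\underline g,n,\overline U,\varepsilon)$. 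Also fix a maximal $(\varepsilon/4)$-separated set in $\overline W$. Apply strong orbital specification with the single connecting block to realise, for each $x\in E_{\underline g}$, a point $y_x\in W$ with $\underline h^{-1}(B(x,\underline g,\varepsilon/4))\ni\underline h(y_x)$, i.e.\ $y_x\in\underline h^{-1}(B(x,\underline g,\varepsilon/4))$, for a suitable $\underline h\in G^*_{p}$; the diameter estimate $\diam(\underline h^{-1}(B(x,\varepsilon/4)))\le C_{\varepsilon/4}$ forces $\underline h^{-1}(B(x,\underline g,\varepsilon/4))\subset V$. The resulting balls $\underline h^{-1}(B(x,\underline g,\varepsilon/4))$, $x\in E_{\underline g}$, are pairwise disjoint (they project under the local inverse to disjoint $(\underline g,n)$-balls), and each gives a distinct $(\underline h\,\underline g\,{}?,\,n+p,\,\varepsilon/4)$-separated point in $\overline V$; hence for the concatenation $\underline h$ followed by $\underline g$ we get $s(\underline h\underline g, n+p, \overline V, \varepsilon/4)\ge s(\underline g,n,\overline U,\varepsilon)$. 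Summing over $\underline g\in G_n^*$ and dividing by $m^{n+p}$, and noting $m^{n+p}\ge m^n$ up to the fixed factor $m^{-p}$, we obtain
\begin{equation*}
Z_{n+p}((G,G_1),\overline V,\tfrac{\varepsilon}{4})\ \ge\ \frac{1}{m^{n+p}}\sum_{\underline g\in G_n^*} s(\underline g,n,\overline U,\varepsilon)\ =\ m^{-p}\,Z_n((G,G_1),\overline U,\varepsilon).
\end{equation*}
Taking $\frac1{n+p}\log$ of both sides, letting $n\to\infty$, then $\varepsilon\to0$, kills the $m^{-p}$ term and yields $h_{top}((G,G_1),\overline V)\ge h_{top}((G,G_1),X)-\zeta$. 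Since $\zeta$ was arbitrary and trivially $h_{top}((G,G_1),\overline V)\le h_{top}((G,G_1),X)$, we conclude $h_{top}((G,G_1),\overline V)=h_{top}((G,G_1),X)$, i.e.\ $z$ is an entropy point.

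The main obstacle, and the place demanding real care, is bookkeeping with the \emph{averaging} over $G_n^*$: one must make sure that the separated points constructed in $\overline V$ for different source words $\underline g$ genuinely contribute to $s(\cdot,\cdot,\overline V,\cdot)$ for \emph{distinct} words in $G^*_{n+p}$ (or at worst that the multiplicity of collisions is bounded independently of $n$), since otherwise the sum defining $Z_{n+p}$ could undercount. This is handled by choosing the connecting word $\underline h$ so that the map $\underline g\mapsto \underline h\,\underline g$ is injective on $G_n^*$ into $G^*_{n+p}$ — strong orbital specification permits choosing $\underline h$ in $G^*_p$ freely, and one can arrange $\underline h$ not to create reductions, or simply absorb a bounded multiplicity constant into the $m^{-p}$ factor, which is harmless in the limit. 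A second subtlety, already present in the proof of Theorem~\ref{thm:entropypts}, is the chain of inequalities $\varepsilon+C_\varepsilon<C_{\varepsilon_0}$ needed to keep the pulled-back balls inside $V$; this is arranged by taking $\varepsilon$ small enough depending on $\varepsilon_0$, exploiting $C_\varepsilon\to0$ as $\varepsilon\to0$.
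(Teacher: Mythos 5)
Your core argument is the same as the paper's: fix a connecting word $\underline h\in G^*_{p(\varepsilon/4)}$, use the strong orbital specification property to place, for each point $x$ of a maximal $(\underline g,n,\varepsilon)$-separated set, a preimage point inside $V$ lying in $\underline h^{-1}(B(x,\underline g,\varepsilon/4))$, control the diameter of these pulled-back balls by the uniform constant $C_\varepsilon$ so they stay in $V$, and conclude $s(\underline g\,\underline h,n+p,\overline V,\varepsilon/4)\ge s(\underline g,n,\cdot,\varepsilon)$. Your worry about collisions when summing is resolved exactly as in the paper: the sums defining $Z_n$ run over \emph{concatenations} (words in $G_1\setminus\{id\}$), not over group elements, and a word of length $n+p$ splits uniquely into its first $p$ and last $n$ letters; hence $\underline g\mapsto\underline g\,\underline h$ is automatically injective on words and the surplus factor $m^{-p}$ disappears in the limit, just as you say.

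The one step I cannot accept as written is the opening appeal to a Bi\'s-type theorem to produce a point $x_0$ with $h_{top}((G,G_1),\overline U)=h_{top}((G,G_1),X)$ for the \emph{averaged} entropy of Definition~\ref{def:entropyB}. Bi\'s's result (and its use in the proof of Theorem~\ref{thm:entropypts}) concerns the Ghys--Langevin--Walczak entropy $h((G,G_1),\cdot)$, which counts points separated by \emph{some} element of $G_n$; it is not established, here or in the paper, for the averaged quantity $h_{top}$, and the two notions genuinely differ (cf.\ Example~\ref{ex:comparacao}, where they equal $\log(10/3)$ and $\log 5$ respectively). Since your lower bound for $Z_{n+p}(\cdot,\overline V,\varepsilon/4)$ is in terms of $Z_n(\cdot,\overline U,\varepsilon)$, your conclusion really does depend on this unproved transfer. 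Fortunately the detour is unnecessary: the paper takes the maximal $(\underline g,n,\varepsilon)$-separated sets inside all of $X$ rather than inside $\overline U$, which yields $Z_{n+p}((G,G_1),\overline V,\varepsilon/4)\ge m^{-p}\,Z_n((G,G_1),X,\varepsilon)$ directly and hence $h_{top}((G,G_1),\overline V)\ge h_{top}((G,G_1),X)-\zeta$, the $\zeta$ coming only from having fixed the scale $\varepsilon\le\varepsilon_0(\zeta)$. If you replace $\overline U$ by $X$ throughout, your argument becomes the paper's proof.
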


\begin{proof}
Given any point $z\in X$ and $V$ any open neighborhood of $z$ we claim that
$h_{top}((G,G_1),\overline{V})=h_{top}((G,G_1),X)$.
Let $\zeta>0$ be arbitrary and take $\varepsilon_0=\varepsilon_0(\zeta)>0$ such that
$$
\limsup_{n\to\infty}\frac{1}{n}\log s(n,\varepsilon, \overline U)
	\ge h((G,G_1),X) -\zeta
$$
for every $0<\varepsilon \le \varepsilon_0$. Let $p({\varepsilon})\ge 1$ be given by
the strong orbital specification property. Since there are finitely many elements in $G_{p(\varepsilon)}$,
finitely many of its concatenations and the local inverse branches of elements $\underline g : X\to X$
are uniformly continuous there exists a uniform constant $C_{\varepsilon}>0$
(that tends to zero as $\varepsilon \to 0$) so that
$\diam (\underline h^{-1}(B(y,\varepsilon))) \le C_{\varepsilon}$ for every
$\underline h \in G_{p(\varepsilon)}$ and $y\in X$.

Fix $\underline h=h_{i_{p(\varepsilon)}} \dots h_{i_1} \in G^*_{p(\frac{\varepsilon}{4})}$.
Take $n\ge 1$ and $\underline{g}:=g_{i_n} \dots g_{i_1} \in G_n$ arbitrary, let $E=\{x_1,...,x_l\}\subset X$ be a maximal
$(\underline g,n,\varepsilon)$-separated set and consider the open set $W\subset V$
defined by the set of points $y\in V$ so that $d(y,\partial V)>C_{\varepsilon_0}$.
Given a maximal $(\varepsilon, \overline{W})$-separated set $F=\{z_1,...,z_m\}$,
by the specification property, for any $x_i\in E$ and  $z_j\in F$ there exists
 $$
  y_i^j \in B( z_j, \frac{\varepsilon}4 ) \cap \underline h^{-1}( B(x_i, \underline g, \frac{\varepsilon}4 )).
 $$
 Similarly as before we deduce that
 $
 \underline h^{-1} \big( B(x_i, \underline g, \frac{\varepsilon}4 )\big)
	 	\subset V
		\text{ \; for every $i$. }
$
By construction, the dynamical balls $(B(x_i, \underline g, \frac{\varepsilon}4))_{i=1 \dots l}$
are pairwise disjoint and the points $y_i^j$ are $(\underline g \, \underline h, \frac{\varepsilon}4,\overline{V})$-separated.
This proves that
$$
s \big( \underline g \, \underline h,\frac{\varepsilon}{4}, \overline{V} \big)
	\ge s(\underline g, n, X, \varepsilon) \; s(id, 0, \overline{V}, \varepsilon)
	\geq  s(\underline g, n, X, \varepsilon).
$$
Since the elements $\underline g$ and $\underline h$ were chosen arbitrary then, summing over all possible concatenations, we deduce
\begin{align*}
    \limsup_{n\to\infty}\frac{1}{n+p\left(\frac{\varepsilon}{4}\right)}
    		& \log\Big[\frac{1}{m^{n+p\left(\frac{\varepsilon}{4}\right)}}
			\sum_{{\underline g \in G_n^*}+p\left(\frac{\varepsilon}{4}\right)}
			s \big( \underline g, n+p\left(\frac{\varepsilon}{4}\right), \overline{V}, \frac{\varepsilon}{4}\big)\Big]
			\\ %
	& \ge \limsup_{n\to\infty} \frac{1}{n+p\left(\frac{\varepsilon}{4}\right)}
		\log\Big(\frac{1}{m^{n+p\left(\frac{\varepsilon}{4}\right)}}
			\sum_{{\underline g \in G_n^*}} s \big( \underline g, n, X, \varepsilon \big)\Big)\\
  	  &= \limsup_{n\to\infty} \frac{1}{n}\log
	  \Big(\frac{1}{m^{n}}\sum_{{\underline g \in G_n^*}}   s \big( \underline g, n, X, \varepsilon \big) \Big).
\end{align*}
The last inequalities show that $h_{top}((G,G_1),X) \ge h_{top}((G,G_1),\overline V) \ge h_{top}((G,G_1),X) -\zeta$. Since both $z\in X$ and $\zeta>0$ were chosen arbitrary this completes the proof of the theorem.
\end{proof}

\subsection{Positive topological entropy}

 We now prove that orbital specification
properties are enough to guarantee that the semigroup action has positive topological entropy.

\begin{theorem}\label{thm:entropy}
Let $G$ be a finitely generated semigroup with set of generators $G_1$ and assume that
$G\times X \to X$ is a continuous semigroup action on a compact metric space $X$.
If $G\times X \to X$ satisfies the strong orbital specification property then $h_{top}((G,G_1),X)>0$.
In consequence, $h((G,G_1),X)>0$.
\end{theorem}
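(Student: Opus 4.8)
The plan is to reduce to proving $h_{top}((G,G_1),X)>0$, since the inequality $h_{top}((G,G_1),X)\le h((G,G_1),X)$ recorded after Definition~\ref{def:entropyB} then yields the second assertion. Fix two distinct points $a,b\in X$, put $\rho:=d(a,b)>0$ and $\varepsilon:=\rho/4$, and let $p:=p(\varepsilon)\ge1$ be given by the strong orbital specification property.

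The core of the argument is a \emph{uniform} exponential lower bound for separated sets: for every $k\ge2$ and every concatenation $\underline v=g_{i_n}\dots g_{i_1}$ of $n:=(k-1)p$ generators taken from $G_1\setminus\{id\}$ one has $s(\underline v,n,\varepsilon)\ge2^{k}$. To prove this I would split $\underline v$ into the $k-1$ consecutive blocks $\underline h_j:=g_{i_{jp}}\dots g_{i_{(j-1)p+1}}\in G_p^{*}$ (so $\underline h_1$ is applied first and $\underline v=\underline h_{k-1}\dots\underline h_1$), and for each $\omega\in\{0,1\}^{k}$ invoke the strong orbital specification property with: the $k$ orbit pieces all chosen trivial (i.e.\ $n_j=1$ and $\underline g_{n_j,j}=id\in G_1$), the $k-1$ connecting words equal to $\underline h_1,\dots,\underline h_{k-1}$ (legitimate, as $|\underline h_j|=p\ge p(\varepsilon)$), and the targets $x^\omega_j\in\{a,b\}$ equal to $a$ if $\omega_j=0$ and to $b$ if $\omega_j=1$. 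This produces $y_\omega\in X$ with $d\big((\underline h_{j-1}\dots\underline h_1)(y_\omega),x^\omega_j\big)<\varepsilon$ for $1\le j\le k$ (reading $\underline h_{j-1}\dots\underline h_1:=id$ when $j=1$), because with all intermediate orbit pieces equal to the identity the $\ell=1$ condition of Definition~\ref{def:orbit-spec1} collapses to exactly this. If $\omega\ne\omega'$ and $j_0$ is an index where they disagree, then $\{x^\omega_{j_0},x^{\omega'}_{j_0}\}=\{a,b\}$, and since $\underline h_{j_0-1}\dots\underline h_1$ is precisely the length-$(j_0-1)p\le n$ prefix of $\underline v$ occurring in the dynamical distance $d_{\underline v}$, the triangle inequality gives
\[
d_{\underline v}(y_\omega,y_{\omega'})\;\ge\; d\big((\underline h_{j_0-1}\dots\underline h_1)(y_\omega),\,(\underline h_{j_0-1}\dots\underline h_1)(y_{\omega'})\big)\;\ge\;\rho-2\varepsilon\;=\;2\varepsilon\;>\;\varepsilon .
\]
Hence $\{y_\omega:\omega\in\{0,1\}^{k}\}$ is a $(\underline v,n,\varepsilon)$-separated set of cardinality $2^{k}$, which proves the claim.

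Granting the claim the conclusion is immediate: the sum defining $Z_n$ runs over the $m^{n}$ concatenations of $n$ generators from $G_1\setminus\{id\}$, so $Z_n((G,G_1),X,\varepsilon)=m^{-n}\sum_{\underline v}s(\underline v,n,\varepsilon)\ge m^{-n}\cdot m^{n}\cdot 2^{k}=2^{k}$, whence $\tfrac1n\log Z_n((G,G_1),X,\varepsilon)\ge\tfrac{k\log2}{(k-1)p}\ge\tfrac{\log2}{p}$. Letting $k\to\infty$ along $n=(k-1)p$ gives $\limsup_{n\to\infty}\tfrac1n\log Z_n((G,G_1),X,\varepsilon)\ge\tfrac{\log2}{p}>0$; since $\varepsilon'\mapsto\limsup_n\tfrac1n\log Z_n((G,G_1),X,\varepsilon')$ is non-increasing, the limit defining $h_{top}((G,G_1),X)$ dominates this value, so $h_{top}((G,G_1),X)\ge\tfrac{\log2}{p(\varepsilon)}>0$ and $h((G,G_1),X)\ge h_{top}((G,G_1),X)>0$.

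The step I expect to require the most care is securing the lower bound $s(\underline v,n,\varepsilon)\ge 2^{k}$ \emph{for every} word $\underline v$, rather than for a single cleverly chosen one: because of the normalisation $m^{-n}$ in $Z_n$, an exponential count $2^{k}$ coming from one word could be completely swallowed (when $2^{1/p}<m$), so uniformity over words is not a cosmetic improvement but a necessity. This is exactly what the \emph{strong} orbital specification property provides — the connecting ``long words'' may be prescribed arbitrarily in $G_{p_j}^{*}$, so cutting an arbitrary length-$(k-1)p$ word into blocks of length $p$ realises it as such a concatenation of gaps — while the device of taking the intermediate orbit pieces equal to the identity makes the shadowing insensitive to any contraction caused by the generators and pins the orbit of $y_\omega$ near the prescribed binary itinerary $(x^\omega_j)_j$. (It is here that ``strong'' cannot be replaced by ``weak'' orbital specification with this argument, since the weak version only controls a subset $\tilde G_p\subsetneq G_p^{*}$ whose relative size at the fixed scale $p=p(\varepsilon)$ need not be close to $1$.)
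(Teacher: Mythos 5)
Your proposal is correct and follows essentially the same route as the paper's proof: fix two well-separated points, apply the strong orbital specification property with all intermediate orbit pieces equal to the identity so that an arbitrary word of length $(k-1)p(\varepsilon)$ cut into blocks of length $p(\varepsilon)$ serves as the prescribed connecting words, obtain $2^k$ shadowing points realizing all binary itineraries, and conclude that every summand in $Z_n$ is at least $2^k$ so the normalisation $m^{-n}$ is harmless. Your write-up is in fact somewhat more explicit than the paper's (which treats $k=2$ and then says ``repeating the previous reasoning''), and your closing remark about why uniformity over all words is essential and why ``strong'' cannot be replaced by ``weak'' here matches the paper's separate treatment of the weak case under hypothesis (H).
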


\begin{proof}
Since the expression in the right hand side of \eqref{eq:entropiaB} is increasing as $\varepsilon \to 0$ then it is enough to prove that there exists $\varepsilon>0$ small so that
$$ \displaystyle\limsup_{n\to\infty} \frac1n \log \frac{1}{m^n} \sum_{\underline{g}\in G_n^*}s(\underline{g},n,\varepsilon) >0.$$
Let $\varepsilon>0$ be small and fixed so that there are at least two distinct $2\varepsilon$-separated
points $x_1,x_2\in X$. Take $p(\frac{\varepsilon}{2})\ge 1$ given by the strong orbital specification property.
Taking $\underline g_{n_1,1}=\underline g_{n_2,2}=id$ and $\underline h = h_{p(\frac\varepsilon{2})}\dots
h_2 \, h_1  \in  G^*_{p(\frac\varepsilon{2})}$ there are $x_{i,j} \in B(x_i,\frac{\varepsilon}{2})$, with $i,j\in\{1,2\}$, such that $\underline h(x_{i,j}) \in B(x_j,\frac{\varepsilon}{2})$. In particular it follows that $s(\underline h, p(\frac{\varepsilon}{2}), \varepsilon)\ge 2^2$.

By a similar argument, given $\underline g:=g_{i_n}\dots g_{i_2} g_{i_1} \in G_n$ with $n=k.p(\frac{\varepsilon}{2})$, it can be written as a concatenation of $k$ elements in $G_{p(\frac{\varepsilon}{2})}$. In other words,
$\underline g= \underline h_k \dots \underline h_1$ with $\underline h_i \in G_{p(\frac{\varepsilon}{2})}$
and repeating the previous reasoning it follows that $ s(\underline g, n, \varepsilon) \ge 2^k$.
Thus,
\begin{align*} \limsup_{n\to\infty} \frac{1}{n} \log Z_n((G,G_1),\varepsilon) & \ge
	\limsup_{k\to\infty}
	\frac1{k\, p(\frac\varepsilon{2})}
	 \log \Bigg( \frac{1}{ m^{k\, p(\frac\varepsilon{2})}} \sum_{|\underline g| = {k\, p(\frac\varepsilon{2})}}
		 s(\underline g, k\, p(\frac\varepsilon{2}), \varepsilon)\Bigg)  \\
	& \ge \frac1{p(\frac{\varepsilon}{2})} \log 2.
\end{align*}
This proves that the entropy is positive and finishes the proof of the theorem.
\end{proof}

Let us observe that in~\cite{Fish} the author obtained a lower bound for the topological entropy
of $C^1$-maps on smooth orientable manifolds. Here we require continuity of the semigroup action
and a specification property (which most likely can be weakened) for deducing that topological entropy
is strictly positive.
One could expect that the weak orbital specification property could imply the semigroup action
to have positive entropy. In fact this is the case whenever the semigroup satisfies additional conditions
on the growth rate which hold e.g. for free semigroups.

\begin{theorem}\label{thm:entropy2}
Assume that $G$ is a finitely generated semigroup and that
the continuous action $G\times X \to X$ on a compact metric space $X$ satisfies the weak
orbital specification property with
\begin{itemize}
\item[(H)]
$\displaystyle\limsup_{p\to\infty} \frac{|G_p^*
		\setminus \tilde G_p|}{m^{\gamma p}}<1$ for every $0<\gamma < 1$.
\end{itemize}
Then the semigroup action $G \times X \to X$ has positive topological entropy.
\end{theorem}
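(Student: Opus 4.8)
The plan is to mimic the proof of Theorem~\ref{thm:entropy}, the difference being that now only the elements of $\tilde G_p$ are available as gluing words, so the counting must be carried out with more care; this is exactly where hypothesis (H) enters. As in Theorem~\ref{thm:entropy}, the quantity $\limsup_{n\to\infty}\frac1n\log Z_n((G,G_1),X,\varepsilon)$ is non-decreasing as $\varepsilon$ decreases to $0$, so it suffices to exhibit one small $\varepsilon>0$ for which it is strictly positive. Fix $\varepsilon>0$ small enough that $X$ carries two points $x_1,x_2$ with $d(x_1,x_2)>2\varepsilon$, let $p(\varepsilon/2)\ge1$ and the sets $\tilde G_q\subset G_q^*$ ($q\ge p(\varepsilon/2)$) be those furnished by the weak orbital specification property, and fix an integer $p\ge p(\varepsilon/2)$, to be taken large only at the very end. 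Call a concatenation of $p$ non-identity generators \emph{good} if its image under $\iota$ lies in $\tilde G_p$, and let $W_p$ denote the number of good concatenations of length $p$.

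\emph{The branching construction.} Let $k\ge1$ and $n=kp$, and consider any concatenation $\underline g$ of length $n$ all of whose $k$ consecutive length-$p$ blocks $\underline h^{(1)},\dots,\underline h^{(k)}$ are good, so that $\underline g=\underline h^{(k)}\cdots\underline h^{(1)}$ with each $\iota(\underline h^{(j)})\in\tilde G_p$. Since $p\ge p(\varepsilon/2)$, these are admissible gluing words for the weak orbital specification property, and arguing exactly as in the proof of Theorem~\ref{thm:entropy} (with the tracing pieces $\underline g_{n_j,j}$ taken equal to $id$, and using the two $2\varepsilon$-separated points $x_1,x_2$ to branch at each of the junctions: a point can be forced to be $\varepsilon/2$-close to $x_1$ or to $x_2$ at each block-boundary time $(j-1)p$) one produces $2^{k}$ points which, because $d(x_1,x_2)>2\varepsilon$, are more than $\varepsilon$ apart at the first block boundary where the corresponding choices disagree, hence pairwise $(\underline g,n,\varepsilon)$-separated. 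Therefore $s(\underline g,n,\varepsilon)\ge 2^{k}$ for every such $\underline g$.

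\emph{Counting and conclusion.} There are $W_p^{\,k}$ concatenations of length $n=kp$ all of whose $p$-blocks are good, so
\[
Z_{kp}((G,G_1),X,\varepsilon)=\frac1{m^{kp}}\sum_{|\underline g|=kp}s(\underline g,kp,\varepsilon)\ \ge\ \frac{W_p^{\,k}\,2^{k}}{m^{kp}}=\Big(\frac{2W_p}{m^p}\Big)^{k},
\]
and letting $k\to\infty$ gives $\limsup_{n\to\infty}\frac1n\log Z_n((G,G_1),X,\varepsilon)\ge\frac1p\log\big(2W_p/m^p\big)$. It therefore only remains to choose $p\ge p(\varepsilon/2)$ with $W_p>m^p/2$: together with the defining density property $\sharp\tilde G_p/\sharp G_p^*\to1$, the sub-exponential control on $\sharp(G_p^*\setminus\tilde G_p)$ supplied by (H) forces the good length-$p$ concatenations to make up, for all large $p$, more than half of the $m^p$ concatenations of length $p$; for such a $p$ one gets $\frac1p\log(2W_p/m^p)>0$, hence $h_{top}((G,G_1),X)>0$, and then $h((G,G_1),X)>0$ as well because $h_{top}\le h$. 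The one genuinely delicate step is this last one: for free semigroups it is immediate, since there $\iota$ is injective and so $W_p=\sharp\tilde G_p$, $\sharp G_p^*=m^p$, whence $W_p/m^p\to1$; in general one must bound how much of the mass of all $m^p$ length-$p$ words can be carried by the bad elements $G_p^*\setminus\tilde G_p$, and the quantitative strength of (H) — that this set is smaller than $m^{\gamma p}$ for \emph{every} $\gamma<1$ — is precisely what is tailored to rule this out.
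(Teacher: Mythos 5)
Your argument is correct and reaches the conclusion by a genuinely simpler route than the paper's. The paper does not restrict to words all of whose length-$p$ blocks are good: it introduces the intermediate notion of a $\gamma$-acceptable concatenation (one in which more than $\gamma k$ of the $k$ blocks of $\underline g=\underline h_k\dots\underline h_1\in G^*_{kp}$ lie in $\tilde G_p$), shows via a binomial estimate that the proportion of non-$\gamma$-acceptable words of length $kp$ decays like $e^{-\delta k}$ --- this is exactly where the full strength of (H) is consumed, since one needs $|G_p^*\setminus\tilde G_p|/m^{\gamma p}$ to beat the combinatorial factor $\binom{k}{[\gamma k]}$ --- and then branches only at the good blocks to get $s(\underline g,kp,\vep)\ge 2^{\gamma k}$, ending with the lower bound $\frac{\gamma}{p(\vep/2)}\log 2-\frac{\delta}{p(\vep/2)}>0$. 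By insisting that \emph{every} block be good you trade the count $\sharp\{\gamma\text{-acceptable}\}\ge(1-e^{-\delta k})\,m^{kp}$ for the cruder $W_p^{\,k}$, and all you then need is $W_p>m^p/2$, which is a far weaker input; indeed your route reveals that for this particular statement (H) is essentially redundant, because the density requirement $\sharp\tilde G_p/\sharp G_p^*\to1$ already built into the definition of weak orbital specification gives $W_p/m^p\to1$ once one reads $G_p^*$ and $\tilde G_p$ as sets of length-$p$ concatenations --- which is how the paper itself counts in its own applications (compare the estimate \eqref{eqH3} in Example~\ref{dxx}). The one caveat, which you correctly isolate as the delicate step but then dispatch a little too quickly, is that if $\tilde G_p\subset G_p^*$ were instead read as sets of semigroup \emph{elements}, then neither (H) nor the density condition controls the sizes of the fibres of the coding map $\iota$ over the bad elements, and a handful of bad elements could in principle absorb a definite fraction of all $m^p$ words (think of a free abelian pair of generators with $\tilde G_p$ omitting the central elements), destroying the bound $W_p>m^p/2$; but the paper's own count $\sum_{l}\binom{k}{l}|G_p^*|^{k-l}|G_p^*\setminus\tilde G_p|^{l}$ is subject to exactly the same ambiguity, so under the reading the paper actually uses your step is immediate and your proof stands.
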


In Subsection~\ref{section of examples} we give some examples of semigroups
combining circle expanding maps and rotations that satisfies the weak orbital specification
property and for which $|G_p^* \setminus \tilde G_p|$ is finite, hence (H) holds.

\begin{proof}[Proof of Theorem~\ref{thm:entropy2}]
Given $\varepsilon>0$, let $p(\varepsilon)\ge 1$ be given by the specification property.
For any $p\ge p(\varepsilon)$ let $\tilde G_p\subset G^*_p$ be given by the weak orbital specification property.  Take $n= k p$ with $p \ge p(\frac{\varepsilon}{2})$ and assume that (H) holds.

For any $\underline g \in G^*_n$ one can write it as a concatenation of $k$ elements in $G^*_{p}$, that is,
$\underline g= \underline h_k \dots \underline h_1$ with $\underline h_i \in G^*_{p}$. If this is the case,
given $0<\gamma<1$ we will say that $\underline{g}=\underline h_k \dots \underline h_1\in G^*_n$ is
$\gamma$-acceptable if
$
\sharp\{0\leq j\leq k:\underline{h}_j\in\tilde G_p \}>\gamma k.
$
Notice that
\begin{align*}
\sharp\{\underline{g}= \underline h_k \dots \underline h_1 \in G_{kp}&:\underline{g} \mbox{ not $\gamma$-acceptable}\}\\
	                                                                   &\leq\! \displaystyle\sum_{l\geq [\gamma k]}^k
	\sharp\{\underline{g}\in G_{kp}:\sharp\{0\leq j\leq k:\underline{h}_j\in G_p\backslash\tilde G_p\}=l\}.
\end{align*}
In consequence,

\begin{align*}
\sharp\{\underline{g}= \underline h_k \dots \underline h_1 \in G^*_{kp}&:\underline{g} \mbox{ not $\gamma$-acceptable}\}\\
	                                                                   &\leq\! \displaystyle\sum_{l\geq [\gamma k]}^k
	\sharp\{\underline{g}\in G^*_{kp}:\sharp\{0\leq j\leq k:\underline{h}_j\in G^*_p\backslash\tilde G_p\}=l\}.
\end{align*}
In consequence,
\begin{align}
\frac{\sharp\{\underline{g}\in G^*_{kp}:\underline{g} \mbox{ is not $\gamma$-acceptable}\}}{m^{kp}}
&\leq
	\frac{\displaystyle\sum_{l\geq [\gamma k]}^k\left(
                                              \begin{array}{c}
                                                k \\
                                               l \\
                                              \end{array}
                                            \right)
		|G^*_p|^{k-l}|G^*_p\backslash\tilde G_p|^l}{m^{kp}} \nonumber \\
&\leq k
	\frac{\left(
        \begin{array}{c}
          k \\
          \mbox{$[\gamma k]$} \nonumber  \\
        \end{array}
      \right) m^{(1-\gamma)kp} | G^*_p\backslash\tilde G_p|^k}{m^{kp}} \\
&= k
	\left( \!\!\!
        \begin{array}{c}
          k \\
          \mbox{$[\gamma k]$} \\
        \end{array}
      \!\!\!\right)
            		\Big(\frac{|G^*_p\backslash\tilde G_p|}{m^{\gamma p}}\Big)^k
	.  \label{bacana}
\end{align}
By assumption (H), given $0<\gamma_0 <1$ let $0<\delta \ll \log 2$ be small so that
$\limsup_{p\to\infty} \frac{|G^*_p\backslash\tilde G_p|}{m^{\gamma_0 p}} <e^{-2\delta}<1$.
Then by monotonicity of the later limsup in $\gamma$, it is clear that
$$
\limsup_{p\to\infty} \frac{|G^*_p\backslash\tilde G_p|}{m^{\gamma p}} < e^{-2 \delta}<1
$$
for every $\gamma \in (\gamma_0,1)$. Up to consider larger $\gamma$ sufficiently close to $1$ so that
$k\, \left(
        \begin{array}{c}
          k \\
          \mbox{$[\gamma k]$} \\
        \end{array}
      \right) \le e^{\delta k}$
for every $k$ large. The later implies that
\begin{align*}
\frac{\sharp\{\underline{g}\in G^*_{kp}:\underline{g} \mbox{ is not $\gamma$-acceptable}\}}{m^{kp}}
&  \lesssim
	e^{\delta k}
      		\Big(\frac{|G^*_p\backslash\tilde G_p|}{m^{\gamma p}}\Big)^k
  \lesssim e^{-\delta k}
\end{align*}
which decreases exponentially fast in $k$ (provided that $p$ is large enough).
Moreover, given $p\gg p(\frac{\varepsilon}{2})$ one can proceed as in the proof of the previous theorem
and prove that $s(\underline g, kp, \varepsilon) \ge 2^{\gamma k}$ for any $\gamma$-admissible
$\underline g\in G^*_{kp}$. Consequently,
\begin{align*}
\limsup_{n\to\infty} \frac{1}{n} \log Z_n((G,G_1),\varepsilon)
	& \ge
	\limsup_{k\to\infty}
	\frac1{k\, p(\frac\varepsilon{2})}
	 \log \Big(
	 	\frac{\sharp\{\underline{g}\in G^*_{kp}:\underline{g} \mbox{ is $\gamma$-acceptable}\}}
			{ m^{k p(\frac\varepsilon{2})}}
		   2^{\gamma k}  \Big)  \\
	& \ge
		 \frac1{p(\frac{\varepsilon}{2})} \log 2^{\gamma}
	   + 	\limsup_{k\to\infty}
	\frac1{k\, p(\frac\varepsilon{2})}
	 \log \left( 1- e^{-\delta k}
		\right) \\
	& \geq \frac{\gamma}{p(\frac{\varepsilon}{2})} \log 2- \frac{\delta}{p(\frac\varepsilon{2})}
\end{align*}
which is strictly positive, by the choice of $\delta$ and $\gamma$.
This proves the theorem.
\end{proof}

\section{Thermodynamics of expansive semigroup actions with specification}
\label{sec:expanding}

In this section we study thermodynamical properties of positively expansive semigroup actions
satisfying specification and also semigroups of uniformly expanding maps.
First we prove that semigroups of expanding maps satisfy the orbital specification properties
(Theorems~\ref{thm:expanding}). Then we obtain conditions for the convergence of topological pressure
(Theorem~\ref{thm:vep-generator-entropy}).
Finally we prove a strong regularity of the topological pressure function (Theorem~\ref{thm:diff}) and
prove that topological entropy is a lower bound for the exponential growth rate of
periodic points (Theorem~\ref{thm:B}).

\subsection{Semigroup of expanding maps and specification}

Throughout this subsection we shall assume that $X$ is a compact Riemannian manifold. We say that a
$C^1$-local diffeomorphism $f: M \to M$ is an \emph{expanding map} if there are constants $C>0$ and
$0<\lambda<1$ such that $\|(Df^n(x))^{-1}\| \le C \lambda^n$ for every $n\ge 1$ and $x\in X$.
\begin{theorem}\label{thm:expanding}
Let $G_1=\{g_1, g_2, \dots ,g_k\}$ be a finite set of expanding maps and let $G$ be the generated
semigroup. Then $G$ satisfies the strong orbital specification property.
\end{theorem}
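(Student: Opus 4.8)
The plan is to reduce the semigroup statement to the classical single-map argument by exploiting that every element $\underline g = g_{i_n}\dots g_{i_1}$ obtained by concatenating expanding generators is itself uniformly expanding, with expansion constants that can be chosen uniformly over all concatenations (since $G_1$ is finite). First I would fix a Lebesgue number $\rho>0$ and constants $C>0$, $0<\lambda<1$ so that every $g\in G_1$ satisfies $\|(Dg(x))^{-1}\|\le C\lambda$ and so that each $g$ restricted to any ball of radius $\rho$ is a diffeomorphism onto its image; passing to an adapted metric (or to an iterate), one may assume $C=1$ and that every composition of generators expands distances by at least $\lambda^{-1}>1$ on scale $\rho$. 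The key structural fact is then the \emph{inverse-branch shadowing property}: given any concatenation $\underline h = h_{i_q}\dots h_{i_1}$ and any point $y$, there is a well-defined inverse branch sending a ball $B(\underline h(z),\rho)$ diffeomorphically onto a set of diameter $\le C'\lambda^{q}$ around $z$, contracting uniformly in $q$.

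The main steps, in order: (1) Given $\vep>0$, choose $p(\vep)$ large enough that $C'\lambda^{p(\vep)} < \vep/2$ (or some such bound controlling the contraction of length-$p(\vep)$ inverse branches). (2) Process the specified pieces of orbit from the last one backwards: given points $x_1,\dots,x_k$, times $n_1,\dots,n_k$, concatenations $\underline g_{n_j,j}$ and connecting words $\underline h_{p_j}\in G^*_{p_j}$ with $p_j\ge p(\vep)$, I would inductively build a nested sequence of shrinking balls. Start with the ball $B(x_k,\vep)$ and pull it back along $\underline g_{n_k,k}$ to a ball around $x_k$ (the orbit of radius $\le \vep$ along $\underline g_{\ell,k}$ is automatic once the endpoint is controlled, because expansion means the preimage of an $\vep$-ball under $\underline g_{\ell,k}$ is contained in the $\vep$-ball along that partial orbit — here I use the adapted metric so intermediate iterates are also $\vep$-close). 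Then pull back further along the connecting word $\underline h_{p_{k-1}}$; because that word has length $\ge p(\vep)$, the resulting set has diameter $<\vep/2$, so it fits well inside $B(x_{k-1},\vep)$ no matter where that ball is, and then intersect with the pull-back of $B(x_{k-1},\vep)$ along $\underline g_{n_{k-1},k-1}$. Iterating down to $j=1$ produces a nonempty nested intersection (nonempty because at each stage we are intersecting a tiny ball that is forced to lie inside the next target ball), and any point $x$ in the final set is the desired tracing point.

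The step I expect to be the main obstacle is the bookkeeping in step (2): one must verify that at each stage the diameter estimate coming from the length-$\ge p(\vep)$ connecting word genuinely dominates the position of the previously constructed ball, so that the intersection with the next $\vep$-ball around $x_{j}$ is nonempty rather than empty — i.e. that contraction by the connecting words beats the possible ``drift'' accumulated from the earlier pull-backs. This is handled by making $p(\vep)$ large relative to $\vep$ and $\rho$ only (not relative to $k$ or the $n_j$), using that the inverse-branch contraction rate $\lambda$ is uniform over all concatenations; the $n_j$-long stretches $\underline g_{n_j,j}$ do not hurt because pulling an $\vep$-ball back along an expanding composition only shrinks it (and keeps all intermediate iterates $\vep$-close in the adapted metric). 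A secondary technical point is ensuring the inverse branches used are all well-defined, which follows from choosing $\vep<\rho$ and keeping every constructed set inside balls of radius $\rho$; since each constructed set has diameter at most $\vep<\rho$ this is immediate. Once these estimates are assembled, the conclusion matches Definition 2.6 verbatim, proving the theorem.
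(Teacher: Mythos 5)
Your overall scheme -- pull back nested sets from the last orbit segment to the first, using that inverse branches of concatenations of generators contract uniformly -- is the same skeleton as the paper's proof, and your observation that pulling an $\vep$-ball back along an expanding composition keeps all intermediate iterates $\vep$-close is exactly the paper's Lemma~\ref{le:dynamical.images} ($\underline g(B(x,\underline g,\vep))=B(\underline g(x),\vep)$). But the connecting step contains a genuine gap. You write that after pulling the target set back along the connecting word $\underline h_{p_{k-1}}$ ``the resulting set has diameter $<\vep/2$, so it fits well inside $B(x_{k-1},\vep)$ no matter where that ball is.'' Small diameter does not imply containment: the preimage of the target under $\underline h_{p_{k-1}}$ is a union of many tiny inverse-branch images scattered over $X$, and nothing in your argument guarantees that \emph{one of them} lies inside $B(\underline g_{n_{k-1},k-1}(x_{k-1}),\vep)$. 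That is precisely the point of the specification property, and it is not a formal consequence of contraction of inverse branches.

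What is actually needed is the covering statement: for every $\vep>0$ there is $N(\vep)$ such that $\underline h(B(y,\vep))=X$ for every $y\in X$ and every $\underline h\in G^*_N$ with $N\ge N(\vep)$. This is the paper's Lemma~\ref{le:uniform.exactness}, proved by a path-lifting/connectedness argument on the compact manifold, and it is the real reason a uniform $p(\vep)$ exists: with it, $\underline h_{p_{j}}\bigl(B(\underline g_{n_{j},j}(x_{j}),\vep)\bigr)=X$ contains the next dynamical ball, so some inverse branch of that ball lands inside $B(\underline g_{n_{j},j}(x_{j}),\vep)$, and the nested pull-back goes through. Note that this uses compactness and connectedness of $X$ (and that expanding maps are covering maps), not just the uniform contraction rate $\lambda$; a version of your argument on a space where images of small balls never grow to cover the whole space would fail. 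Adding this lemma (and replacing the ``small diameter hence contained'' step by ``image covers $X$ hence an inverse branch is contained'') repairs the proof and brings it in line with the paper's.
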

The following two lemmas will be instrumental in the proof of Theorem~\ref{thm:expanding}.
\begin{lemma}\label{le:dynamical.images}
Let $g_1, \dots ,g_k$ be $C^1$-expanding maps on the compact manifold $X$. There exists
$\vep_0>0$ so that
$
\underline g ( B(x,g,\vep)  ) = B( \underline g(x), \vep)
$
for any $0<\vep \le \vep_0$, any $x\in X$ and any
$\underline g \in G$.
\end{lemma}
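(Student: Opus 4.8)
The plan is to establish the two inclusions separately, the easy one being $\underline g\big(B(x,\underline g,\vep)\big)\subseteq B(\underline g(x),\vep)$: writing $\underline g=g_{i_n}\dots g_{i_1}\in G_n$ and taking $j=n$ in the definition \eqref{eq:dynball} of $B(x,\underline g,\vep)$, any $y$ in this dynamical ball satisfies $d(\underline g(y),\underline g(x))\le\vep$, i.e. $\underline g(y)\in B(\underline g(x),\vep)$. So the real content is the reverse inclusion.

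For the reverse inclusion I would first record the relevant local consequence of expansiveness. Since $G_1$ is finite and each $g\in G_1$ is a $C^1$ expanding map, a standard argument (pass to an adapted Riemannian metric and then use compactness of $X$) yields $\vep_0>0$ and $0<\lambda_0<1$ such that for every $g\in G_1$ and every $p\in X$ there is a local inverse branch $\tau_{g,p}\colon B(g(p),\vep_0)\to X$ with $\tau_{g,p}(g(p))=p$, $g\circ\tau_{g,p}=\mathrm{id}$, and $d(\tau_{g,p}(z_1),\tau_{g,p}(z_2))\le\lambda_0\,d(z_1,z_2)$ on $B(g(p),\vep_0)$. In particular, for every $0<\vep\le\vep_0$ one gets $\tau_{g,p}\big(B(g(p),\vep)\big)\subseteq B(p,\vep)$ and hence $B(g(p),\vep)\subseteq g\big(B(p,\vep)\big)$.

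With this in hand, fix $0<\vep\le\vep_0$, write $\underline g=g_{i_n}\dots g_{i_1}\in G_n$, put $x^{(j)}:=\underline g_j(x)$ for $0\le j\le n$ (so $x^{(0)}=x$, $x^{(n)}=\underline g(x)$, $g_{i_{j+1}}(x^{(j)})=x^{(j+1)}$), and let $z\in B(\underline g(x),\vep)$. I would then build a preimage of $z$ lying in $B(x,\underline g,\vep)$ by a finite downward recursion: set $z^{(n)}:=z$ and, having defined $z^{(j+1)}$ with $d(z^{(j+1)},x^{(j+1)})\le\vep$, put $z^{(j)}:=\tau_{g_{i_{j+1}},\,x^{(j)}}(z^{(j+1)})$; this is legitimate since $z^{(j+1)}\in B(x^{(j+1)},\vep)\subseteq B(x^{(j+1)},\vep_0)$ lies in the domain of the branch, and it gives $g_{i_{j+1}}(z^{(j)})=z^{(j+1)}$ together with $d(z^{(j)},x^{(j)})=d(\tau_{g_{i_{j+1}},x^{(j)}}(z^{(j+1)}),\tau_{g_{i_{j+1}},x^{(j)}}(x^{(j+1)}))\le\lambda_0\,\vep\le\vep$. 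Setting $y:=z^{(0)}$, an immediate induction yields $\underline g_j(y)=z^{(j)}$ for all $0\le j\le n$, whence $d(\underline g_j(y),\underline g_j(x))\le\vep$ for all such $j$ — so $y\in B(x,\underline g,\vep)$ — and $\underline g(y)=z^{(n)}=z$. Thus $z\in\underline g\big(B(x,\underline g,\vep)\big)$, which proves the inclusion and the lemma.

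The step I expect to be the main obstacle is the uniform local picture asserted in the second paragraph: one must exhibit a single scale $\vep_0$ and a uniform contraction rate $\lambda_0<1$ for the inverse branches that work simultaneously for all (finitely many) generators, which is exactly where the choice of an adapted metric and compactness of $X$ enter. Once that is set up, the pull-back recursion is routine, and everything else is elementary bookkeeping along the concatenation $g_{i_n}\dots g_{i_1}$.
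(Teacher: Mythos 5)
Your proposal is correct and follows essentially the same route as the paper: both arguments rest on the existence of a uniform scale $\vep_0$ and a uniform contraction rate $\lambda<1$ for the local inverse branches of the finitely many generators, and then pull the ball $B(\underline g(x),\vep)$ back along the word $g_{i_n}\dots g_{i_1}$ one generator at a time to land inside the dynamical ball. The paper phrases this as the dynamical ball $\bigcap_{j=0}^n \underline g_j^{-1}(B(\underline g_j(x),\vep))$ being mapped diffeomorphically onto $B(\underline g(x),\vep)$, which is exactly your two-inclusion bookkeeping written more tersely.
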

\begin{proof}
Let $d_i=\deg(g_i)$ be the degree of the map $g_i$. Since $g_i$ is a local diffeomorphism
there exists $\delta>0$ (depending on $g_i$) so that for every $x\in X$ setting
$g_i^{-1} (x) = \{ x_{i,1}, \dots, x_{i,d_i} \}$
there are $d_i$ well defined inverse branches $g_{i,j}^{-1}: B(x,\delta) \to V_{x_{i,j}}$ onto an
open neighborhood of $x_{i,j}$.
Since there are finitely many maps $g_i$ there exists a uniform constant $\delta_0>0$ so that
all inverse branches for $g_{i}$ are defined in balls of radius $\delta_0$.
Furthermore, since all $g_i$ are uniformly expanding all inverse branches are $\lambda$-contracting
for some uniform $0<\lambda<1$, meaning that
$
d ( \; g_{i,j}^{-1}(y) , g_{i,j}^{-1}(z)  \;)
	\le \lambda \, d(y,z)
$
for any $x\in X$, any $y,z \in B(x,\delta_0)$ and  $i=1 \dots k$.
In particular $g_{i,j}^{-1} (B(x,\delta_0)) \subset B(x_{i,j},\delta_0)$ and so
$$
V_{x_{i,j}}
	= \{ y \in X : d(y, x_{i,j})<\delta_0 \,\&\, d(g_i(y), g_i(x_{i,j})) <\delta_0 \}
	=B_{g_i}(x_{i,j},1,\delta_0).
$$
Using this argument recursively, every $\underline g_j = g_{i_j} \dots g_{i_2}\, g_{i_1} \in G_j$
is a contraction and we get that the dynamical ball
$
B(x,\underline g,\delta)
	= \bigcap_{j=0}^n \underline g_{j}^{-1} ( B( \underline g_{j}(x), \delta) )
$
(for $0<\delta<\delta_0$)
is mapped diffeomorphically by $\underline g$ onto $B( \underline g(x), \delta)$,
proving the lemma.
\end{proof}
\begin{lemma}\label{le:uniform.exactness}
Let $g_1, \dots ,g_k$ be $C^1$-expanding maps on the compact manifold $X$. For any $\vep>0$
there exists $N=N(\vep) \in \mathbb N$ so that
$
\underline g_N ( B(x,\vep) )=X
$
for every $x\in X$ and every $\underline g_N\in G^*_N$.
\end{lemma}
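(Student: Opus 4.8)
The plan is to prove the statement \emph{not} by tracking forward images of the ball $B(x,\vep)$ — this naive approach stalls, since the sheet structure established in the proof of Lemma~\ref{le:dynamical.images} only guarantees that the image of a ball of the uniform even-covering radius $\delta_0$ under a single generator again contains a ball of radius $\delta_0$, with no visible growth beyond $\delta_0$ once the map starts to fold — but instead by lifting paths \emph{backwards} through the concatenation $\underline g_N$, which is a covering map whose inverse branches contract exponentially in $N$.

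\textbf{Step 1 (adapted metric and covering maps).} First I would fix an adapted Riemannian metric on $X$ for the finite family $g_1,\dots,g_k$, i.e. a metric and a constant $0<\mu<1$ such that $\|(Dg_i(y))^{-1}\|\le\mu$ for every $i$ and every $y\in X$; this is the standard Mather-type construction applied to finitely many expanding maps. Since this metric is uniformly equivalent to the original one, it suffices to prove the statement in the adapted metric. Equivalently, every local inverse branch of each $g_i$ is $\mu$-Lipschitz, and hence every local inverse branch of any concatenation $\underline g_N=g_{i_N}\dots g_{i_1}\in G_N^*$ is $\mu^{N}$-Lipschitz. Next I would note that each $g_i$, being a proper local homeomorphism of the compact connected manifold $X$, is a covering map (this is precisely the even-covering property underlying the proof of Lemma~\ref{le:dynamical.images}); therefore every $\underline g_N\in G_N^*$, being a finite composition of covering maps, is itself a covering map.

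\textbf{Step 2 (backward path lifting).} Set $D=\diam(X)$ and, given $\vep>0$, choose $N=N(\vep)\in\N$ with $\mu^{N} D<\vep$. Fix any $\underline g_N\in G_N^*$, any $x\in X$, and any target $q\in X$. By Hopf--Rinow choose a path $\sigma\colon[0,1]\to X$ from $q=\sigma(0)$ to $\underline g_N(x)=\sigma(1)$ with $\mathrm{length}(\sigma)\le D$. Since $\underline g_N$ is a covering map and $\underline g_N(x)=\sigma(1)$, the path $\sigma$ admits a lift $\tilde\sigma\colon[0,1]\to X$ with $\underline g_N\circ\tilde\sigma=\sigma$ and $\tilde\sigma(1)=x$ (lift the reversed path with starting point $x$ and reverse back), defined on all of $[0,1]$. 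Because $\underline g_N$ has $\mu^{N}$-Lipschitz inverse branches, $\mathrm{length}(\tilde\sigma)\le\mu^{N}\,\mathrm{length}(\sigma)\le\mu^{N} D<\vep$, so $d(\tilde\sigma(0),x)<\vep$ while $\underline g_N(\tilde\sigma(0))=q$. Thus $q\in\underline g_N(B(x,\vep))$; since $q\in X$ was arbitrary, $\underline g_N(B(x,\vep))=X$, which is the claim.

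\textbf{Main obstacle.} The only genuine difficulty is the one flagged at the outset: the metric-expansion argument on forward images does not close, since beyond the even-covering scale $\delta_0$ one loses control of $\underline g(B(\cdot,\delta_0))$. The conceptual crux is to replace forward images by backward path lifts, which converts the problem into an exponential-contraction estimate that is uniform in $x$ and in the choice of concatenation $\underline g_N\in G_N^*$. After this reduction, the remaining ingredients — existence of a common adapted metric for the finite generating family, the fact that a proper local homeomorphism of a compact connected manifold is a covering map, and the path-lifting property of covering spaces — are all standard.
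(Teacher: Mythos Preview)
Your proof is correct and follows essentially the same route as the paper: the paper also lifts a curve in the image (joining $\underline g_N(x)$ to a point $y\notin \underline g_N(B(x,\vep))$, arguing by contradiction) back through $\underline g_N$ and uses the $\lambda^N$-contraction of inverse branches to bound the length of the lift below $\vep$. Your version is a direct argument and is more explicit about the covering-map structure and the adapted-metric step, but the core idea is identical.
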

\begin{proof}
There exists a uniform $0<\lambda<1$ so that all inverse branches for $g_i$ are
$\lambda$-contracting for any $i$. Fix $\delta>0$. Using the compactness of $X$ it
is enough to prove that for any $x\in X$ there exists $N\ge 1$ so that
$\underline g_N (B(x,\delta))=X$ for every $\underline g_N \in G^*_N$.
Take $N=N(\delta) \ge 1$ be large and such that $\lambda^N (1+\diam X)<\delta$.
Let $\underline g_N \in G^*_N$ be arbitrary and assume, by contradiction, that
$\underline g_N (B(x,\delta))\neq X$.
Then there exists a curve $\gamma_N$ with diameter at most $\diam X +1$
connecting the points $x$ and $y\in X \setminus \underline g_N (B(x,\delta))$.
Consider a covering of $\gamma_N$ by balls of radius $\delta$ and consider $\gamma$ the image
of $\gamma_N $ by the inverse branches, such that $\gamma$ connects $x$
 to some point $z\not\in B(x,\delta)$  so that $\underline g_N(z)=y$. Using that
 $y\not\in \underline g_N (B(x,\delta))$ one gets that $z\not\in B(x,\delta)$.
  Since $\underline g_N$ is a $\lambda^N$-contraction
then $\delta < d(x,z) \le \text{length} (\gamma) \le \lambda^N (1+\diam X) < \delta$, which is a
contradiction. Thus the lemma follows.
\end{proof}

\begin{proof}[Proof of Theorem~\ref{thm:expanding}]
The proof of the theorem follows from the previous lemmas. In fact, let $\vep>0$ be fixed and consider
$x_1, x_2, \dots, x_k \in X$, natural numbers $n_1, n_2, \dots, n_k$ and
group elements $\underline g_{{n_j},j} = g_{i_{n_j}, j} \dots g_{i_2,j} \, g_{i_1,j} \in G_{n_j}$ ($j=1\dots k$).
By Lemma \ref{le:dynamical.images}, there exists $\varepsilon_0$ such that for $\varepsilon\leq\varepsilon_0$
$$
\underline{g}_{n_j}(B(x_j,\underline{g}_{n_j},\varepsilon))=B(\underline{g}_{n_j}(x_j),\varepsilon), \;
\forall1\leq j \leq k.
$$
We may assume without loss of generality that $\delta<\varepsilon_0$.
Let $p(\delta)=N(\delta)$ be given by Lemma~\ref{le:uniform.exactness}.
Given $p_1, \dots, p_k \ge p(\vep)$,  for $\underline h_{p_j} \in G^*_{p_j}$
we have that
$
\underline{h}_{p_i}(B(\underline{g}_{n_i}(x_i),\delta))=X.
$
It implies that given $\bar{x}_k\in B(x_k,\underline{g}_{n_k},\delta)$, one has
$\bar{x}_k=\underline{h}_{p_{k-1}}(\bar{x}_{k-1})$, with $\bar{x}_{k-1}\in B(\underline{g}_{n_{k-1}}(x_{k-1},\varepsilon))$,
and then $\bar{x}_k=\underline{g}_{n_{k-1}}\underline{h}_{p_{k-1}}(\bar{x}_{k-2})$, for some
$\bar{x}_{k-2}\in B(x_{k-1},\underline{g}_{n_{k-1}},\varepsilon)$.
By induction, there exists $x\in B(x_1,\underline{g}_{n_1},\varepsilon)$, such that
$$
\underline g_{{\ell}, j} \, \underline  h_{p_{j-1}} \, \dots \, \underline g_{{n_2}, 2} \, \underline h_{p_1} \, \underline g_{{n_1}, 1} (x)
\in B( x_j,\underline g_{{\ell}, j},\varepsilon)
$$
for every $j=2 \dots k$ and $\ell =1 \dots n_j$. This completes the proof of the theorem.
\end{proof}
For completeness, let us mention that the results in this subsection hold also for general
topologically mixing distance expanding maps on compact metric spaces $(X,d)$. Recall $f$ is a
distance expanding map if there are $\delta>0$ and $0<\lambda<1$  so that $d(f(x),f(y)) \ge \lambda^{-1} d(x,y)$ for every $d(x,y)<\delta$. Our motivation to focus on smooth maps comes from the fact
free semigroups can be constructed and shown to be robust in this context
(c.f. Section~\ref{section of examples}).

\subsection{Convergence and regularity of entropy and the pressure function}

In what follows we shall introduce a notion of topological pressure.
 For notational simplicity, given $\underline g\in G_{n}$ and $U\subset X$ we will use the notation
 $
 S_{\underline g}\varphi(x)=\sum_{i=0}^{n-1} \varphi (\underline g_i (x))
 $
 and
 $
   S_{\underline g}\varphi(U)=\sup_{x\in U}S_{\underline g}\varphi(x).
 $
\begin{definition}\label{eq:pressaoB}
For any continuous observable $\varphi\in C(X)$ we define the \emph{topological pressure of $(G,G_1)$
with respect to $\varphi$} by
\begin{equation}
P_{top}((G,G_1),\varphi, X):=\lim_{\varepsilon \to 0} \limsup_{n \to \infty}
	 \frac{1}{n} \log Z_n((G,G_1),\varphi, \varepsilon),
\end{equation}
where
\begin{equation}\label{eq:defZnpre}
Z_n((G,G_1),\varphi, \varepsilon)
	= \frac{1}{m^n}
		\sum_{{\underline g \in G_n^*}} \sup_{E}
		\left\{ \sum_{x\in E} e^{\sum_{i=0}^{n-1} \varphi (\underline g_i (x))} \right\}
\end{equation}
and the supremum is taken over all sets $E=E_{\underline g, n,\varepsilon}$ that are
$(\underline g, n,\varepsilon)$-separated.
\end{definition}
Observe that in the case that $G$ has only one generator $f$ then $|G_n|=|\{f^n\}|=1$
and $P_{top}((G,G_1),\varphi)$ coincides with the classical pressure $P_{\text{top}}(f,\varphi)$.
The case that the potential is constant to zero corresponds to the notion of topological entropy
introduced in Definition~\ref{def:entropyB}.
We proceed to prove that the topological pressure of expansive semigroup actions with the specification property
can be computed as a limit.
For that purpose we provide an alternative formula to compute the topological pressure
using open covers.
Given $\varepsilon>0$, $n\in \mathbb{N}$ and  $\underline{g}\in G_n$,
we say that an open cover $\mathcal{U}$ of $X$ is an $(\underline{g},n,\varepsilon)$-cover if
any open set $U\in \mathcal{U}$ has $d_{\underline{g}}$-diameter smaller than $\varepsilon$,
where $d_{\underline{g}}$ is the metric introduced in \eqref{eq:dg}. Let $cov(\underline{g},n,\varepsilon)$
be the minimum cardinality of a $(\underline{g},n,\varepsilon)$-cover of $X$.
To obtain a characterization of the topological pressure using open covers of the space we need the
continuous potential to satisfy a regularity condition.
Given $\varepsilon>0$ and $\underline{g}:=g_{i_n} \dots g_{i_1}\in G$ we define
the \emph{variation of $S_{\underline{g}}\varphi$} in dynamical balls of radius $\varepsilon$
by
$$
Var_{\underline g}(\varphi,\varepsilon)
	=\sup_{d_{\underline g}(x,y)<\varepsilon}
	|S_{\underline{g}}\varphi(x)-S_{\underline{g}}\varphi(y)|.
$$
We say that $\varphi$ has \emph{bounded distortion property} (in dynamical balls of radius
$\varepsilon$) if there exists $C>0$ so that
\begin{equation*}
\sup_{\underline{g}\in G} \; \sup_{x\in X} Var_{\underline g}(\varphi,\varepsilon) \le C.
\end{equation*}
For short we denote by $BD(\varepsilon)$ the space of continuous potentials that have
bounded distortion in dynamical balls of radius $\varepsilon$ and we say that $\varphi$ has \emph{bounded distortion property} if there exists $\varepsilon >0$
so that $\varphi$ has bounded distortion on dynamical balls of radius $\varepsilon$.
In what follows we prove that H\"older potentials have bounded distortion
for semigroups of expanding maps.

\begin{lemma}\label{le:expandings}
Let $G$ be a finitely generated semigroup of expanding maps on a compact metric space $X$ with
generators $G_1=\{g_1, \dots, g_m\}$. Then any H\"older continuous observable $\varphi: M \to \mathbb R$
satisfies the bounded distortion property.
\end{lemma}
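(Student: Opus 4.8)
The plan is to exploit the uniform contraction of inverse branches of expanding maps, which gives exponential decay of the diameter of dynamical balls, and combine this with the H\"older hypothesis on $\varphi$ to get a geometrically summable bound on the variation $Var_{\underline g}(\varphi,\varepsilon)$ that is uniform over all $\underline g\in G$. Concretely, suppose $\varphi$ is $\alpha$-H\"older with constant $C_\varphi$, i.e. $|\varphi(x)-\varphi(y)|\le C_\varphi\, d(x,y)^\alpha$. Fix $\varepsilon_0>0$ as in Lemma~\ref{le:dynamical.images} and let $0<\lambda<1$ be the uniform contraction rate of the inverse branches of all generators $g_1,\dots,g_m$ (such a uniform $\lambda$ exists since there are finitely many generators, all expanding). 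I would work with dynamical balls of radius $\varepsilon\le\varepsilon_0$.

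First I would establish the key geometric estimate: if $\underline g=g_{i_n}\dots g_{i_1}\in G_n$ and $d_{\underline g}(x,y)<\varepsilon$, then for each $0\le j\le n$ one has $d(\underline g_j(x),\underline g_j(y))\le \lambda^{\,n-j}\varepsilon$. This follows because $\underline g_n(x)=\underline g(x)$ and $\underline g_n(y)=\underline g(y)$ are within $\varepsilon$ by definition of $d_{\underline g}$, and applying the relevant inverse branch of $g_{i_{j+1}}$ (which is well defined on the $\varepsilon_0$-ball and $\lambda$-contracting, as in the proof of Lemma~\ref{le:dynamical.images}) carries $\underline g_{j+1}(x),\underline g_{j+1}(y)$ back to $\underline g_j(x),\underline g_j(y)$, contracting distances by at least $\lambda$; a downward induction on $j$ from $j=n$ gives the claim. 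Here one must check that $y$ indeed lies in the dynamical ball $B(x,\underline g,\varepsilon)$ so that all the points $\underline g_j(y)$ stay inside the domains of the inverse branches — but this is exactly the content of $d_{\underline g}(x,y)<\varepsilon\le\varepsilon_0$ together with Lemma~\ref{le:dynamical.images}, which says the dynamical ball maps diffeomorphically onto the ordinary ball.

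Then I would sum:
\begin{align*}
|S_{\underline g}\varphi(x)-S_{\underline g}\varphi(y)|
 &\le \sum_{j=0}^{n-1} |\varphi(\underline g_j(x))-\varphi(\underline g_j(y))|
 \le \sum_{j=0}^{n-1} C_\varphi\, d(\underline g_j(x),\underline g_j(y))^\alpha \\
 &\le C_\varphi\, \varepsilon^\alpha \sum_{j=0}^{n-1} \lambda^{\alpha(n-j)}
 \le C_\varphi\, \varepsilon^\alpha \sum_{\ell\ge 1} \lambda^{\alpha \ell}
 = \frac{C_\varphi\, \varepsilon^\alpha\, \lambda^\alpha}{1-\lambda^\alpha} =: C.
\end{align*}
Since the right-hand side is independent of $n$, of $\underline g$, and of the points $x,y$, taking the supremum over $d_{\underline g}(x,y)<\varepsilon$ and then over $\underline g\in G$ and $x\in X$ yields $\sup_{\underline g\in G}\sup_{x\in X} Var_{\underline g}(\varphi,\varepsilon)\le C$, which is precisely the bounded distortion property for $\varphi$ at scale $\varepsilon$ (any $\varepsilon\le\varepsilon_0$ works). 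The main obstacle — really the only subtle point — is making the inverse-branch argument for the geometric estimate fully rigorous when $\underline g$ is a long concatenation: one needs the uniform radius $\delta_0$ and uniform contraction $\lambda$ from the proof of Lemma~\ref{le:dynamical.images} and to verify that staying inside the dynamical ball keeps every iterate inside the domain where these inverse branches are defined. Once that is in hand, the H\"older summation is routine and the uniformity over $G$ is automatic.
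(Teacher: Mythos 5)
Your proposal is correct and follows essentially the same route as the paper: the uniform $\lambda$-contraction of inverse branches gives $d(\underline g_j(x),\underline g_j(y))\le \lambda^{n-j}\,d(\underline g_n(x),\underline g_n(y))\le\lambda^{n-j}\varepsilon$ inside a dynamical ball, and the H\"older condition then yields a geometric series bounded by $\frac{K\varepsilon^\alpha}{1-\lambda^\alpha}$ uniformly in $n$, $\underline g$ and $x,y$. Your additional care about the inverse branches being defined along the whole orbit (via the uniform radius $\delta_0$ and Lemma~\ref{le:dynamical.images}) only makes explicit a step the paper leaves implicit.
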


\begin{proof}
Let $\delta_0>0$ and $0< \lambda <1$ be chosen as in the proof of the previous lemma
and assume that $\varphi$ is $(K,\alpha)$-H\"older. Given
any $0<\varepsilon<\delta_0/2$, any $\underline g=g_{i_n} \dots g_{i_1} \in G_n$
and $x,y\in X$ with $d_{\underline g}(x,y)<\varepsilon$,
\begin{align*}
 |S_{\underline{g}}\varphi(x)-S_{\underline{g}}\varphi(y)|
 	& = | \sum_{i=0}^{n-1} \varphi (\underline g_i (x)) - \sum_{i=0}^{n-1} \varphi (\underline g_i (y)) |
	 \le  \sum_{i=0}^{n-1} | \varphi (\underline g_i (x)) - \varphi (\underline g_i (y)) |  \\
	 & \le \sum_{i=0}^{n-1} K d( \underline g_i (x), \underline g_i (y) )^\alpha
	 \le \sum_{i=0}^{n-1} K \lambda^{(n-i)\alpha} d( \underline g_n (x), \underline g_n (y) )^\alpha \\
	 & \le \frac{K}{1-\lambda^\alpha} \varepsilon^\alpha.
 \end{align*}
This proves the lemma.
\end{proof}

\begin{proposition}\label{le:equivalenceP}
Let $\varphi:X\to \mathbb R$ be a continuous map satisfying the bounded distortion condition.
Then the topological pressure $P_{top}((G,G_1),\varphi, X)$ with respect to the potential $\varphi$ satisfies
 $$
P_{top}((G,G_1),\varphi, X)
	=\lim_{\varepsilon\to 0}\limsup_{n\to\infty}\frac{1}{n}\log\left(\frac{1}{m^n}\sum_{{\underline g \in G_n^*}}	
	\inf_{\mathcal{U}}\sum_{U\in\mathcal{U}}e^{S_{\underline g}\varphi(U)}\right),
 $$
 where the infimum is taken over all open covers $\mathcal{U}$ of $X$ such that
$\mathcal{U}$ is a $(\underline g,n,\varepsilon)$-open cover.
\end{proposition}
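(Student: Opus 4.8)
The plan is to show a two-sided inequality between the separated-set formula defining $P_{top}((G,G_1),\varphi,X)$ and the open-cover formula, for every fixed $\varepsilon$ (up to a controlled loss in the scale $\varepsilon$), and then pass to the limit $\varepsilon\to0$. The comparison will be carried out termwise in $\underline g\in G_n^*$, so the combinatorial factor $\frac{1}{m^n}$ and the sum over $\underline g$ play no role beyond being carried along; the bounded distortion hypothesis is exactly what lets the termwise comparison hold with a multiplicative constant $e^{C}$ that is uniform in $\underline g$ and $n$.

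First I would fix $\underline g=g_{i_n}\dots g_{i_1}\in G_n^*$ and compare the quantity $\sup_E\sum_{x\in E}e^{S_{\underline g}\varphi(x)}$, over $(\underline g,n,\varepsilon)$-separated sets $E$, with $\inf_{\mathcal U}\sum_{U\in\mathcal U}e^{S_{\underline g}\varphi(U)}$, over $(\underline g,n,\varepsilon)$-covers $\mathcal U$. For the easy direction, let $E$ be $(\underline g,n,\varepsilon)$-separated and $\mathcal U$ a $(\underline g,n,\varepsilon/2)$-cover (say): each member of $\mathcal U$ has $d_{\underline g}$-diameter $<\varepsilon/2$ hence contains at most one point of $E$, so $\sum_{x\in E}e^{S_{\underline g}\varphi(x)}\le\sum_{U\in\mathcal U}\sup_{y\in U}e^{S_{\underline g}\varphi(y)}=\sum_{U\in\mathcal U}e^{S_{\underline g}\varphi(U)}$. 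Taking sup over $E$ and inf over $\mathcal U$ gives one inequality at scale $\varepsilon$ versus scale $\varepsilon/2$. For the reverse direction, take a maximal $(\underline g,n,\varepsilon)$-separated set $E$; by maximality the balls $\{B_{d_{\underline g}}(x,\varepsilon):x\in E\}$ cover $X$, and each such ball has $d_{\underline g}$-diameter $\le2\varepsilon$, so $\mathcal U:=\{B_{d_{\underline g}}(x,\varepsilon):x\in E\}$ is a $(\underline g,n,2\varepsilon)$-cover. Using bounded distortion, $e^{S_{\underline g}\varphi(B_{d_{\underline g}}(x,\varepsilon))}\le e^{C}e^{S_{\underline g}\varphi(x)}$ (here $Var_{\underline g}(\varphi,2\varepsilon)\le C$ after possibly halving the distortion scale), hence $\inf_{\mathcal U'}\sum_{U\in\mathcal U'}e^{S_{\underline g}\varphi(U)}\le\sum_{U\in\mathcal U}e^{S_{\underline g}\varphi(U)}\le e^{C}\sum_{x\in E}e^{S_{\underline g}\varphi(x)}\le e^{C}\sup_E\sum_{x\in E}e^{S_{\underline g}\varphi(x)}$, where the outer infimum is over $(\underline g,n,2\varepsilon)$-covers.

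Next I would sum these termwise bounds over $\underline g\in G_n^*$, divide by $m^n$, take $\frac1n\log(\cdot)$ and $\limsup_{n\to\infty}$: the constant $e^{C}$ contributes $\frac1n\log e^{C}\to0$, so at the level of $\limsup_n\frac1n\log$ we obtain, for each fixed $\varepsilon$,
\[
L(2\varepsilon)\ \le\ \limsup_{n\to\infty}\frac1n\log Z_n((G,G_1),\varphi,\varepsilon)\ \le\ L(\varepsilon/2),
\]
where $L(\varepsilon)$ denotes the open-cover expression $\limsup_{n\to\infty}\frac1n\log\big(\frac1{m^n}\sum_{\underline g\in G_n^*}\inf_{\mathcal U}\sum_{U\in\mathcal U}e^{S_{\underline g}\varphi(U)}\big)$ at scale $\varepsilon$. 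Finally, both $\varepsilon\mapsto\limsup_n\frac1n\log Z_n((G,G_1),\varphi,\varepsilon)$ and $\varepsilon\mapsto L(\varepsilon)$ are monotone in $\varepsilon$ (finer separation/finer covers can only increase the counts), so the limits as $\varepsilon\to0$ exist and the sandwich forces them to coincide, i.e. $P_{top}((G,G_1),\varphi,X)=\lim_{\varepsilon\to0}L(\varepsilon)$, which is the claimed formula.

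The only real subtlety — and the place where the hypothesis is genuinely used — is the step $e^{S_{\underline g}\varphi(U)}\le e^{C}e^{S_{\underline g}\varphi(x)}$ for $U$ a dynamical ball of small radius around $x$: this requires that $Var_{\underline g}(\varphi,\cdot)$ be bounded by a constant independent of both $\underline g\in G$ and $n$, which is precisely the bounded distortion property (and, by Lemma~\ref{le:expandings}, holds for H\"older $\varphi$ over semigroups of expanding maps). Without it, the variation term would grow with $n$ and contaminate the exponential rate. Everything else is the standard separated-sets-versus-covers comparison adapted verbatim to the metrics $d_{\underline g}$, performed uniformly in the summation index $\underline g$; I expect no further obstacle, only bookkeeping of which scale ($\varepsilon$, $\varepsilon/2$, $2\varepsilon$) appears where, which washes out in the $\varepsilon\to0$ limit.
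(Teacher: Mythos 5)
Your argument is correct and follows essentially the same route as the paper's proof: a termwise (in $\underline g$) comparison of maximal $(\underline g,n,\varepsilon)$-separated sets with $(\underline g,n,\varepsilon)$-covers, using the dynamical balls centered at a maximal separated set as a cover at scale $2\varepsilon$ and invoking bounded distortion to replace $S_{\underline g}\varphi(U)$ by $S_{\underline g}\varphi(x)+C$, followed by the sandwich in $\varepsilon$ and monotonicity as $\varepsilon\to 0$. The only cosmetic difference is that you compare separated sets at scale $\varepsilon$ with covers at scale $\varepsilon/2$ in the easy direction, where the paper compares them at the same scale; this washes out in the limit exactly as you say.
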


\begin{proof}
Although the proof of this proposition follows a classical argument we include it here for completeness.
Take $\varepsilon>0$, $n\in \mathbb N$ and
$\underline g\in G_n$. To simplify the notation we denote
$$
C_n((G,G_1),\varphi,\varepsilon)
	=\frac{1}{m^n}
\sum_{{\underline g \in G_n^*}}\inf_{\mathcal{U}}
\sum_{U\in\mathcal{U}}e^{S_{\underline g}\varphi(U)}
$$
where the infimum are taken over all $(\underline g,n,\varepsilon)$-open covers
and let $Z_n((G,G_1),\varphi,\varepsilon)$ be given by equation~\eqref{eq:defZnpre}.
Given a $(\underline g,n,\varepsilon)$-maximal separated set $E$ it follows that
 $\mathcal{U}= \{B(x,\underline g,\varepsilon)\}_{x\in E}$ is a
 $(\underline g,n,2\varepsilon)$-open cover. By the bounded distortion assumption,
 $
 S_{\underline g}\varphi (B(x,\underline g,\varepsilon))
 	= \sup_{z\in B(x,\underline g,\varepsilon)}S_{\underline g}\varphi(z)
	 \leq S_{\underline g}\varphi(x)+C
$
for some constant $C>0$, depending only on $\varepsilon$. Consequently,

 \begin{align}\label{eq:bd}
 \limsup_{n\to\infty}\frac{1}{n}\log C_n((G,G_1),\varphi, 2\varepsilon)
 \leq \limsup_{n\to\infty}\frac{1}{n}\log Z_n((G,G_1),\varphi,\varepsilon).
 \end{align}
On the other hand, if $\mathcal{U}$ is $(\underline g,n,\varepsilon)$-open cover,
  for any $(\underline g,n,\varepsilon)$-separated set $E\subset X$ we have that
  $\sharp E\leq \sharp \mathcal{U}$, since the diameter of any $U\in\mathcal{U}$
 in the metric  $d_{\underline g}$ is less than $\varepsilon$. By the bounded distortion
 condition  we get that

 \begin{align}\label{eq:bd2}
 \limsup_{n\to\infty}\frac{1}{n}\log Z_n((G,G_1),\varphi,\varepsilon)
 \leq \limsup_{n\to\infty}\frac{1}{n}\log C_n((G,G_1),\varphi,\varepsilon).
 \end{align}
 Now, combining equations \eqref{eq:bd} and \eqref{eq:bd2} we get that

 \begin{align}\label{eq:equalities}
 \limsup_{n\to\infty}\frac{1}{n}\log Z_n((G,G_1),\varphi,\varepsilon)
 	& \leq \limsup_{n\to\infty}\frac{1}{n}\log C_n((G,G_1),\varphi,\varepsilon) \\
	& \leq \limsup_{n\to\infty}\frac{1}{n}\log Z_n((G,G_1),\varphi,\frac{\varepsilon}2) \nonumber
 \end{align}
 and then the result follows.
\end{proof}

In the next lemma we provide a condition
under which the topological pressure can be computed as a limit.

\begin{proposition}\label{le:limitt}
Let $\varphi:X\to \mathbb R$ be a continuous potential.
Given $\varepsilon>0$, the limit superior
\begin{align*}
 \limsup_{n\to\infty}
 	\frac{1}{n} \log \Big(\frac{1}{m^n}
 \sum_{{\underline g \in G_n^*}}\inf_{\mathcal{U}}\sum_{U\in\mathcal{U}}e^{S_{\underline g}\varphi (U)}\Big)
\end{align*}
is indeed a limit.
\end{proposition}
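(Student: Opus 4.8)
The plan is to show that the sequence $a_n:=\log\!\big(\tfrac{1}{m^n}\sum_{\underline g\in G_n^*}\inf_{\mathcal U}\sum_{U\in\mathcal U}e^{S_{\underline g}\varphi(U)}\big)$ is \emph{subadditive}, i.e. $a_{n+k}\le a_n+a_k$, so that the existence of the limit follows from Fekete's subadditive lemma, which also identifies the limit with $\inf_n \tfrac1n a_n$ and hence with $\limsup_n\tfrac1n a_n$. To do this I would exploit the multiplicative structure of $G_n^*$: every reduced word $\underline g\in G_{n+k}^*$ of length $n+k$ factors uniquely as $\underline g=\underline g'\,\underline g''$ with $\underline g''\in G_n^*$ (the first $n$ generators applied) and $\underline g'\in G_k^*$ (the last $k$), and conversely every such pair gives a word in $G_{n+k}^*$. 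This makes the sum over $G_{n+k}^*$ literally a product of the sum over $G_k^*$ and the sum over $G_n^*$, once we control how covers and Birkhoff sums behave under concatenation.

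The key estimate is that if $\mathcal U''$ is a $(\underline g'',n,\varepsilon)$-cover of $X$ and $\mathcal U'$ is a $(\underline g',k,\varepsilon)$-cover of $X$, then $\mathcal W:=\{\,U''\cap \underline g''^{-1}(U')\;:\;U''\in\mathcal U'',\,U'\in\mathcal U'\,\}$ is a $(\underline g,n+k,\varepsilon)$-cover of $X$ (since along the first $n$ steps the $d_{\underline g}$-diameter is controlled by $\mathcal U''$, and along the last $k$ steps by the fact that $\underline g''$ maps each piece into a member of $\mathcal U'$). Moreover the cocycle identity $S_{\underline g}\varphi(x)=S_{\underline g''}\varphi(x)+S_{\underline g'}\varphi(\underline g''(x))$ gives $S_{\underline g}\varphi(W)\le S_{\underline g''}\varphi(U'')+S_{\underline g'}\varphi(U')$ for $W=U''\cap\underline g''^{-1}(U')$, so
\[
\sum_{W\in\mathcal W}e^{S_{\underline g}\varphi(W)}
\le\Big(\sum_{U''\in\mathcal U''}e^{S_{\underline g''}\varphi(U'')}\Big)\Big(\sum_{U'\in\mathcal U'}e^{S_{\underline g'}\varphi(U')}\Big).
\]
Taking infima over $\mathcal U''$ and $\mathcal U'$, then summing over all factorizations $\underline g=\underline g'\underline g''$ and dividing by $m^{n+k}=m^n m^k$, yields exactly $e^{a_{n+k}}\le e^{a_n}e^{a_k}$, i.e. $a_{n+k}\le a_n+a_k$.

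Having subadditivity, Fekete's lemma gives $\lim_{n\to\infty}\tfrac1n a_n=\inf_{n\ge1}\tfrac1n a_n$; in particular this limit exists (possibly $-\infty$, though boundedness of $\varphi$ and finiteness of covers keeps it finite), and it coincides with $\limsup_{n\to\infty}\tfrac1n a_n$, which is the assertion of the proposition. I would also remark that one must be slightly careful with the identity element: the factorization statement is about reduced words over $G_1^*$, and since $\iota(\{i_{n+k}\dots i_1:i_j\in\{1,\dots,m\}\})=G_{n+k}^*$ concatenation of generators (not of abstract group elements) is what is being summed over, so the counting is exact with the factor $m^{n+k}$.

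The main obstacle I anticipate is purely bookkeeping rather than conceptual: making sure that ``reduced word of length $n+k$'' really does split as a length-$k$ reduced word times a length-$n$ reduced word with no collapse at the junction, so that the sum factors \emph{exactly} (an inequality $\le$ in one direction already suffices for subadditivity, and that is all one needs). A secondary subtlety is that the inner infimum over covers is attained or can be approximated, and that $\mathcal W$ above is genuinely a cover of all of $X$ by sets of the required $d_{\underline g}$-diameter — this uses that $\underline g''$ is continuous (indeed a local homeomorphism in the expanding setting, though continuity alone suffices for the cover argument) so that $\underline g''^{-1}(U')$ is open. None of these require the bounded distortion hypothesis, which is why the statement is for arbitrary continuous $\varphi$.
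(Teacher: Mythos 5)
Your proposal is correct and follows essentially the same route as the paper: both prove subadditivity of $a_n=\log\big(\tfrac1{m^n}\sum_{\underline g\in G_n^*}\inf_{\mathcal U}\sum_{U}e^{S_{\underline g}\varphi(U)}\big)$ by splitting each length-$(n+k)$ word at the junction, refining a $(\underline g'',n,\varepsilon)$-cover by the pullback of a $(\underline g',k,\varepsilon)$-cover, using the cocycle identity for $S_{\underline g}\varphi$, and then invoking Fekete's lemma (the paper's only cosmetic differences are a harmless normalization of $\varphi$ to be non-negative and writing the join as $\underline k^{-1}(\mathcal U)\vee\mathcal V$). Your parenthetical worry about reduced words is moot since the sum in \eqref{eq:Zn} runs over all $m^n$ concatenations of generators, so the factorization of words is exact; as you note, the one-sided inequality is all that subadditivity requires anyway.
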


\begin{proof}
Since $\varphi$ is continuous then it is bounded from below. Assume without loss of generality that
$\varphi$ is non-negative, otherwise we just consider a translation $\varphi+C$ since it will affect the
$\limsup$ by a translation of $C$.
Given $\varepsilon>0$, recall  that the infimum is taken over all $(\underline g,n,\varepsilon)$-open covers
$\mathcal{U}$ of $X$.
For any element $\underline g =\underline h \, \underline k \in G^*_{\ell+n}$
with $\underline h\in G_{\ell},\underline k\in G_n^* $, and any
$(\underline{h},n,\varepsilon)$-cover  $\mathcal{U}$ and
$(\underline{k},\ell,\varepsilon)$-cover $\mathcal{V}$ then
$\mathcal{W}:=\underline{k}^{-1}(\mathcal{U})\vee\mathcal{V}$ is a
$(\underline{g},\ell+n,\varepsilon)$-cover, and
\begin{align*}
 \sum_{\substack{W \in \underline{k}^{-1}(\mathcal{U}) \vee\mathcal{V} \\  W= \underline{k}^{-1}(U) \cap V}}
 	e^{t S_{\underline{g}}\varphi(W)}
	 &\leq
         	\Big( \sum_{V\in \mathcal{V}} e^{t S_{\underline{k}} \varphi(V)} \Big)	
		\Big(\sum_{U\in \mathcal{U}}e^{t S_{\underline{h}}\varphi(U )} \Big)
\end{align*}

Taking the infimum over the open covers $\mathcal{U}$ and $\mathcal{V}$ we deduce that
\begin{align*}
\inf_{\mathcal{W}}
	\Big\{ \sum_{W\in \mathcal{W}} e^{t S_{\underline{g}}\varphi(W)} \Big\}
	 &\leq
	\inf_{\mathcal{V}} \Big\{  \sum_{V\in \mathcal{V}} e^{tS_{\underline{k}}
      	\varphi(V)} \Big\}
	\;
	\inf_{\mathcal{U}}
	\Big\{
	\sum_{U\in \mathcal{U}}
		e^{tS_{\underline{h}}\varphi(U)} \Big\}.
\end{align*}
where the first infimum can be taken over all $(\underline{g},m+n,\varepsilon)$-open covers $\mathcal{W}$.
Summing over every elements $\underline g =\underline h \, \underline k \in G_{\ell+n}^*$,
\begin{align*}
  \sum_{|\underline g|=\ell+n} \inf_{\mathcal{W}}
		\Big\{ \sum_{W\in \mathcal{W}} e^{t S_{\underline{g}}\varphi(W)} \Big\}
      &
      \leq
      	\Big(\sum_{|\underline k|=\ell} \inf_{\mathcal{V}}\sum_{V\in \mathcal{V}}e^{tS_{\underline{k}}
      	\varphi(V)}\Big)
	\!
	\Big(\sum_{|\underline h|=n} \inf_{\mathcal{U}}\sum_{U\in \mathcal{U}}
		e^{tS_{\underline{h}}\varphi(U)}\Big).
\end{align*}
Thus, the sequence of real numbers $(a_n)_{n\in\mathbb N}$ given by
$$
a_n=\log\Big( \sum_{{\underline g \in G_n^*}} \inf_{\mathcal{W}}
		\Big\{ \sum_{W\in \mathcal{W}} e^{tS_{\underline{g}}\varphi(W)} \Big\}\Big)
$$
is subaditive and $\displaystyle\left\{a_n\slash n\right\}_{n\in\mathbb{N}}$ is convergent.
Since the term $\frac1n \log \frac1{m^{n}}$ is clearly constant
this completes the proof of the  proposition.
\end{proof}

From the previous results, the topological pressure can be computed as the limiting complexity of
the group action as the size scale $\varepsilon$ approaches zero.
In what follows we will be mostly interested in providing conditions for the
topological pressure of group actions to be computed as a limit at a definite size scale. Let us introduce the
necessary notions. Let $X$ be a compact metric space and $G\times X \to X$ be a continuous
action associated to the finitely generated semigroup  $(G,G_1)$.

\begin{definition}\label{defexp1}
Given $\delta^*>0$, the semigroup action $G\times X \to X$ is \emph{$\delta^*$-expansive} if for
every $x,y\in X$ there exists $k\ge 1$ and $\underline g\in G_k$ such that
$d(\underline g(x),\underline g(y))>\delta^*$.
The semigroup action $G\times X \to X$ is \emph{strongly $\delta^*$-expansive}
if for any $\gamma>0$ and any $x,y\in X$ with $d(x,y)\ge \gamma$ there exists
$k\ge 1$ (depending on $\gamma$) such that $d_{\underline g}(x,y)>\delta^*$ for all
$\underline g\in G^*_k$.
\end{definition}

\begin{remark}\label{rmk:expansive}
By compactness of the phase space $X$, a continuous action is \emph{strongly $\delta^*$-expansive} satisfies
the following equivalent formulation: given $\gamma>0$ and $x,y\in X$ with $d(x,y)\ge \gamma$
there exists $k_0\ge 1$ (depending on $\gamma$) such that
$d_{\underline g}(x,y)>\delta^*$ for all  $\underline g\in G^*_k$ and $k\ge k_0$.
\end{remark}

In what follows we prove that the topological entropy of expansive semigroup actions can be computed
as the topological complexity that is observable at a definite scale. More precisely,

\begin{theorem}\label{thm:vep-generator-entropy}
Assume the continuous action of $G$
 on the compact metric space $X$ is strongly $\delta^*$-expansive.
 Then, for every continuous potential $\varphi:X\to \mathbb R$ satisfying the bounded distortion condition
 and every $0<\varepsilon<\delta^*$
$$
P(\varphi):=P_{top}((G,G_1),\varphi,X)
	=\limsup_{n\to\infty}\frac{1}{n}\log\left(\frac{1}{m^n}\sum_{{\underline g \in G_n^*}}\sup_{E}
		\sum_{x\in E }e^{S_{\underline g}\varphi(x)}\right)
$$
where the supremum  is taken over all $(\underline g,n,\varepsilon)$-separated
sets $E\subset X$.
\end{theorem}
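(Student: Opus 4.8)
The plan is to show that the $\limsup$ defining the pressure at scale $\varepsilon$ does not actually depend on $\varepsilon$ once $\varepsilon<\delta^*$, so that it equals the double limit from Definition~\ref{eq:pressaoB}. Since we already know from Proposition~\ref{le:equivalenceP} that the pressure can be computed using open covers and from Proposition~\ref{le:limitt} that the cover version is a genuine limit (not merely a $\limsup$), it suffices to prove that for $0<\varepsilon'<\varepsilon<\delta^*$ the quantities $\limsup_n \frac1n\log Z_n((G,G_1),\varphi,\varepsilon)$ and $\limsup_n \frac1n\log Z_n((G,G_1),\varphi,\varepsilon')$ coincide; monotonicity gives one inequality for free, and strong expansiveness must be used to obtain the reverse one. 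The main work is therefore a comparison, uniform over all concatenations $\underline g\in G_n^*$, between $(\underline g,n,\varepsilon')$-separated sets and $(\underline g,n,\varepsilon)$-separated sets.

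First I would fix $0<\varepsilon'<\varepsilon<\delta^*$ and set $\gamma=\varepsilon'$. By Remark~\ref{rmk:expansive}, strong $\delta^*$-expansiveness furnishes an integer $k_0=k_0(\varepsilon')$ such that any two points $x,y$ with $d(x,y)\ge\varepsilon'$ satisfy $d_{\underline h}(x,y)>\delta^*>\varepsilon$ for every $\underline h\in G_k^*$ with $k\ge k_0$. The key consequence is the following: given $\underline g=g_{i_n}\dots g_{i_1}\in G_n^*$ with $n>k_0$, write $\underline g=\underline g'\,\underline h$ where $\underline h=g_{i_{k_0}}\dots g_{i_1}\in G_{k_0}^*$ is the ``initial block'' of length $k_0$. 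If $x,y$ are $(\underline h,k_0,\varepsilon')$-\emph{close}, i.e.\ $d_{\underline h}(x,y)\le\varepsilon'$, but are $(\underline g,n,\varepsilon')$-separated, then the separation must be realized by some $\underline g_j$ with $j>k_0$, and at that moment $d(\underline g_j(x),\underline g_j(y))>\varepsilon'$, whence applying the expansiveness conclusion to the further iterates shows the separation is in fact $>\varepsilon$; a cleaner way to package this is: any $(\underline g,n,\varepsilon')$-separated set, once we quotient by the finitely many $(\underline h,k_0,\varepsilon')$-balls, has at most $\const\cdot\,\#(\text{a maximal }(\underline g,n,\varepsilon)\text{-separated set})$ points, where the constant is the maximal number of $(\underline h,k_0,\varepsilon')$-separated points over all $\underline h\in G_{k_0}^*$ — a number bounded uniformly in $n$ since $G_{k_0}^*$ is finite and $X$ is compact.

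With this bound in hand, and using the bounded distortion hypothesis (which controls the Birkhoff sums $S_{\underline g}\varphi$ across the passage between scales up to an additive constant depending only on $\varepsilon$, cf.\ the argument in Proposition~\ref{le:equivalenceP}), one gets
$$
Z_n((G,G_1),\varphi,\varepsilon')\le \const\cdot e^{C}\, Z_n((G,G_1),\varphi,\varepsilon)
$$
with $\const$ and $C$ independent of $n$. Taking $\frac1n\log(\cdot)$ and $\limsup_{n\to\infty}$ kills the multiplicative constants and yields
$$
\limsup_{n\to\infty}\frac1n\log Z_n((G,G_1),\varphi,\varepsilon')\le\limsup_{n\to\infty}\frac1n\log Z_n((G,G_1),\varphi,\varepsilon).
$$
The opposite inequality is immediate from monotonicity of separated-set cardinalities in the scale parameter. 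Hence the $\limsup$ at scale $\varepsilon$ is constant for all $0<\varepsilon<\delta^*$, so it equals $\lim_{\varepsilon\to0}\limsup_n\frac1n\log Z_n=P_{top}((G,G_1),\varphi,X)$; and by Proposition~\ref{le:limitt} (together with Proposition~\ref{le:equivalenceP}, which identifies the separated-set and open-cover formulations in the presence of bounded distortion) this $\limsup$ is actually a limit, giving the stated formula.

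I expect the main obstacle to be proving the \textbf{uniformity in $\underline g\in G_n^*$} of the scale-comparison constant: the naive application of expansiveness gives, for each \emph{pair} of points, an integer $k$ depending on $\gamma$, and one must make sure (via the compactness reformulation in Remark~\ref{rmk:expansive} and the finiteness of $G_{k_0}^*$) that a \emph{single} $k_0$ works simultaneously for all pairs at distance $\ge\varepsilon'$ and for all concatenations, so that the bound on the ``$\varepsilon'$-overcounting'' is genuinely $n$-independent. A secondary technical point is bookkeeping the bounded-distortion additive constant correctly when replacing a separated set at one scale by one at another, but this is routine given Lemma~\ref{le:expandings} and the estimates already carried out in Proposition~\ref{le:equivalenceP}.
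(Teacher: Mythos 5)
Your overall strategy coincides with the paper's: prove that for $0<\gamma<\varepsilon<\delta^*$ the quantity $\limsup_n\frac1n\log Z_n((G,G_1),\varphi,\gamma)$ is dominated by the corresponding quantity at scale $\varepsilon$ (the reverse inequality being monotonicity), and then invoke Propositions~\ref{le:equivalenceP} and~\ref{le:limitt}. However, the key comparison step has a genuine gap. When you locate the first time $j$ at which $d(\underline g_j(x),\underline g_j(y))>\varepsilon'$ and then ``apply the expansiveness conclusion to the further iterates'', you need at least $k_0$ further generator applications \emph{inside the word} $\underline g$, i.e.\ you need $n-j\ge k_0$. If the $\varepsilon'$-separation is only realized in the last $k_0$ letters of $\underline g$, there is no room left to apply strong $\delta^*$-expansiveness, and such a pair genuinely need not be $(\underline g,n,\varepsilon)$-separated. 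Your device of quotienting by the $(\underline h,k_0,\varepsilon')$-balls of the \emph{initial} block $\underline h$ does not cure this: closeness on the initial block only pushes the separation time past $k_0$, creating room \emph{before} the separation, whereas for a (non-invertible) semigroup action expansiveness must be applied to the letters occurring \emph{after} it. So the bound $Z_n(\varepsilon')\le \const\cdot e^C Z_n(\varepsilon)$ is not established as written.

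The paper sidesteps the end-of-word problem by extending the word rather than comparing at equal length: it takes $\underline g=\underline h_2\,\underline h_1\in G^*_{n+k}$ with $\underline h_1\in G_n^*$ and $\underline h_2\in G_k^*$, $k\ge k_0$, and shows that any $(\underline h_1,n,\gamma)$-separated set is automatically $(\underline g,n+k,\varepsilon)$-separated: whatever the separation time $j\le n$, the remaining concatenation $\underline h_2\,\underline h_{1,2}$ has length at least $k\ge k_0$, so Remark~\ref{rmk:expansive} applies. After translating $\varphi$ to be non-negative this yields $Z_n((G,G_1),\varphi,\gamma)\le m^k\,Z_{n+k}((G,G_1),\varphi,\varepsilon)$, and the factor $m^k$ and the index shift vanish in the $\limsup$; bounded distortion is not needed in this step at all. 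Your route could in principle be repaired --- points of a $(\underline g,n,\varepsilon')$-separated set lying in a common $(\underline g,n-k_0,\varepsilon')$-tube and separating only in the final $k_0$ steps number at most $\max_{\underline w\in G_{k_0}^*}s(\underline w,k_0,X,\varepsilon')$, a constant independent of $n$ --- but that counting, together with the factor-of-two issues that arise when comparing against a maximal $(\underline g,n,\varepsilon)$-separated set for $\varepsilon$ close to $\delta^*$, is precisely the content missing from your argument.
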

We just observe, before the proof, that in view of the previous characterization given in
Proposition~\ref{le:equivalenceP}, the same result as above also holds if we
consider open covers instead of separated sets.
\begin{proof}[Proof of Theorem~\ref{thm:vep-generator-entropy}]
Since $X$ is compact and $\varphi:X\to \mathbb{R}$
is continuous we assume, without loss of generality, that $\varphi$
is non negative.
Fix  $\gamma$ and $\varepsilon$ with $0<\gamma<\varepsilon<\delta^*$. We
want to show that
$$
\limsup_{n \to \infty}\frac{1}{n}\log Z_n((G,G_1),\varphi,\gamma)
	\leq\limsup_{n \to \infty} \frac{1}{n} \log Z_n((G,G_1),\varphi,\varepsilon).
$$
The other inequality is clear.
By strong $\delta^*$-expansiveness and Remark~\ref{rmk:expansive}
for any two distinct points $x, y\in X$ with $d(x,y) \ge \gamma$ there exists $k_0 \ge 1$
(depending on $\gamma$) so that  $d_{\underline{g}}(x,y) \geq \delta^*>\varepsilon$
for any $\underline{g}\in G^*_k$ and $k\ge k_0$.
Take $n\ge k_0$ and $\underline g \in G^*_{n+k}$ arbitrary and write $\underline g=
\underline h_2 \underline h_1$ with $\underline h_1 \in G^*_{n}$ and $\underline h_2 \in G^*_{k}$.
Given any $(\underline{h_1},n,\gamma)$-separated  set $E$ we claim that the set $E$ is
$(\underline{g},n+k,\varepsilon)$-separated. In fact, given $x,y\in E$ there exists a decomposition
$\underline h_1=\underline h_{1,2} \, \underline h_{1,1} \in G_n^*$ so that
$d( \underline h_{1,1}(x), \underline h_{1,1}(y) ) >\gamma$.
Using that $\underline{h}_2 \, \underline{h}_{1,2} \in \bigcup_{l\ge k} G_l^*$ and Remark~\ref{rmk:expansive}
it follows that $d_{\underline{g}}(x,y) \ge d_{\underline{h}_2 \, \underline{h}_{1,2}}( \underline h_{1,1}(x), \underline h_{1,1}(y) )>\varepsilon$ proving the claim. Now, using that $\varphi$ is non-negative,
 $$
 e^{S_{\underline g}\varphi(x)}
 	= e^{S_{\underline h_2\underline h_1}\varphi(x)}
	=e^{S_{\underline h_2}\varphi(\underline h_1(x))}e^{S_{\underline h_1}\varphi(x)}
	 \geq e^{S_{\underline h_1}\varphi(x)},
 $$
which implies that
 $Z_n((G,G_1),\varphi,\gamma)
 	\leq m^k  Z_n((G,G_1),\varphi,\varepsilon)$
because
\begin{align*}
Z_n((G,G_1),\varphi,\gamma)
	\!&\! =\frac{1}{m^n}\sum_{|\underline{h}_1|=n}
	  \sup_{E}\sum_{x\in E}e^{S_{\underline h_1}\varphi(x)} \\
	  & \!\leq\!
	\frac{m^{n+k}}{m^{n}}\frac{1}{m^{n+k}}\sum_{{\underline g \in G_n^*}+k}
	\sup_{E}\sum_{x\in E}e^{S_{\underline g}\varphi(x)}
	\!=\! m^k Z_{n+k}((G,G_1),\varphi,\varepsilon).
\end{align*}
Thus it follows that
\begin{align*}
\limsup_{n\to\infty}\frac1{n}\log Z_n((G,G_1),\varphi,\gamma)
	\leq
	\limsup_{n\to\infty}\frac1{n+k}\log Z_{n+k}((G,G_1),\varphi,\varepsilon),
\end{align*}
as we wanted to prove. This completes the proof of the theorem.
\end{proof}

Some comments on our assumptions are in order.
It is clear that if some generator for the group is an expansive map then the group is itself expansive.
Clearly, expanding maps are expansive. Moreover, the semigroup $G$ generated by $G_1=\{ g_1,...,g_k\}$
that admits some expansive generator is clearly expansive.
In Lemma~\ref{le:expexp} below we prove that semigroups of expanding maps are strongly expansive semigroups.
\begin{lemma}\label{le:expexp}
Let $G$ be a finitely generated semigroup of expanding maps on a compact metric space $X$ with
generators $G_1$. Then there exists $\delta^*>0$ so that $G$ is strongly $\delta^*$-expansive.
\end{lemma}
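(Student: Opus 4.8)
The plan is to reduce the statement to the uniform contraction estimate for inverse branches that was already established in the proof of Lemma~\ref{le:dynamical.images}. Recall that from there we obtain a uniform radius $\delta_0>0$ and a uniform contraction rate $0<\lambda<1$ such that, for every generator $g_i$ and every $x\in X$, all inverse branches of $g_i$ are defined on balls of radius $\delta_0$ and are $\lambda$-Lipschitz. The candidate expansiveness constant will be $\delta^*:=\delta_0/2$ (any fixed fraction of $\delta_0$ works), and the whole point is that this constant depends only on the finite generating set $G_1$, not on the word $\underline g$.

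First I would fix $\gamma>0$ and two points $x,y\in X$ with $d(x,y)\ge\gamma$, and argue by contradiction: suppose that for some $k\ge 1$ there is $\underline g\in G^*_k$ with $d_{\underline g}(x,y)\le\delta^*$. By definition of $d_{\underline g}$ this means $d(\underline g_j(x),\underline g_j(y))\le\delta^*<\delta_0$ for every $0\le j\le k$, where $\underline g_j$ denotes the length-$j$ prefix of the concatenation defining $\underline g$. In particular the pair $(\underline g_{j-1}(x),\underline g_{j-1}(y))$ lies in a ball of radius $\delta_0$ on which the relevant inverse branch of the $j$-th generator is defined and $\lambda$-Lipschitz; applying it moves us from level $j$ back to level $j-1$ with the distance multiplied by at most $\lambda$. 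Iterating down from level $k$ to level $0$ gives $d(x,y)\le\lambda^k\,d(\underline g(x),\underline g(y))\le\lambda^k\,\diam X$.

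Next I would observe that this forces $k$ to be bounded: $\gamma\le d(x,y)\le\lambda^k\diam X$ gives $k\le k_0(\gamma):=\big\lceil \log(\gamma/\diam X)/\log\lambda\big\rceil$. Hence, for any $k>k_0(\gamma)$ and any $\underline g\in G^*_k$, the inequality $d_{\underline g}(x,y)\le\delta^*$ is impossible, i.e. $d_{\underline g}(x,y)>\delta^*$ for all such $\underline g$ and all $k\ge k_0(\gamma)$. This is exactly the (equivalent) formulation of strong $\delta^*$-expansiveness recorded in Remark~\ref{rmk:expansive}, and the "for all $k$ sufficiently large" quantifier is automatic here rather than something extra to prove.

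The only mildly delicate point — and the one I would write out carefully rather than the iteration itself — is the bookkeeping that guarantees, at each step $j$, that $(\underline g_{j-1}(x),\underline g_{j-1}(y))$ really lies in a ball on which a single inverse branch of $g_{i_j}$ carrying $\underline g_j(x)$ back to $\underline g_{j-1}(x)$ is simultaneously defined at $\underline g_j(y)$; this is where $\delta^*<\delta_0$ and the characterization $V_{x_{i,j}}=B_{g_i}(x_{i,j},1,\delta_0)$ from the proof of Lemma~\ref{le:dynamical.images} are used. Once that is in place the chain of inequalities is immediate, and the lemma follows.
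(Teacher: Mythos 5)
Your proposal is correct and follows essentially the same route as the paper: take $\delta^*=\delta_0/2$ from the uniform inverse-branch estimates of Lemma~\ref{le:dynamical.images}, assume by contradiction that $d_{\underline g}(x,y)\le\delta^*$ for some $\underline g\in G_k^*$, and iterate the $\lambda$-contraction of the inverse branches down the prefixes to force $d(x,y)<\gamma$ once $k$ exceeds an explicit threshold. The only (harmless) differences are that you bound $d(\underline g(x),\underline g(y))$ by $\diam X$ where the paper uses the sharper bound $\delta^*$, and that you spell out the bookkeeping ensuring each pair of prefix images lies in the domain of a single inverse branch, a point the paper leaves implicit.
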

\begin{proof}
Let $G_1=\{g_1,...,g_m\}$ be the set of generators of $G$. Following
the proof of Lemma~\ref{le:dynamical.images}
there are uniform constants $\delta_0>0$ and $0<\lambda<1$ so that
all inverse branches $g_{i,j}^{-1}$ for $g_{i}$ are defined in balls of radius $\delta_0$ and
$
d ( \; g_{i,j}^{-1}(y) , g_{i,j}^{-1}(z)  \;)
	\le \lambda \, d(y,z).
$
for any $x\in X$, any $y,z \in B(x,\delta_0)$ and  $i=1 \dots m$.
Take $\delta_*=\delta_0/2$. Given $\gamma>0$ take $k\ge 1$ (depending on $\gamma$) so that
$\lambda^k \delta^* < \gamma$. We claim that for any $x,y\in X$ with $d(x,y) \ge \gamma$ and $g\in G^*_k$
we have $d_{\underline{g}}(x,y)>\delta^*$.
 Assume, by contradiction, that there exists $\underline{g}=g_{i_k}...g_{i_1}\in G^*_k$ with
 $d(\underline g(x), \underline g(y)) \le d_{\underline{g}}(x,y)\le \delta^*$. Then
$
d( g_{i_j}...g_{i_1}(x), g_{i_j}...g_{i_1}(y) )
	\le \lambda^{k-j} d( g_{i_k}...g_{i_1}(x), g_{i_k}...g_{i_1}(y) )
$
for every $1\le j \le k$ and so $d(x,y) \le \lambda^k d(\underline g(x), \underline g(y)) <\gamma$,
which is a contradiction. This finishes the proof of the lemma.
\end{proof}
\begin{theorem}\label{thm:diff}
Let $G$ be a finitely generated semigroup with generators $G_1$.
If the semigroup action induced by $G$ on the compact metric space $X$ is strongly
$\delta^*$-expansive and the potentials $\varphi,\psi:X\to\mathbb R$ are continuous
and satisfy the bounded distortion property then
\begin{enumerate}
 	 \item $P_{\text{top}}((G,G_1), \varphi+c,X)=P_{\text{top}}((G,G_1), \varphi,X)+c$ f
	 	or every $c\in \mathbb R$
	\item $|P_{\text{top}}((G,G_1), \varphi,X) - P_{\text{top}}((G,G_1), \psi,X)| \leq\|\varphi-\psi\|$, and
	  \item the pressure function $t\mapsto P_{\text{top}}((G,G_1),t \varphi,X)$ is an uniform limit of
	   differentiable maps.
\end{enumerate}
Moreover, $t\mapsto P_{\text{top}}((G,G_1),t \varphi,X)$ is differentiable Lebesgue-almost everywhere.
\end{theorem}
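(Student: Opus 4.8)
Items (1) and (2) are immediate from the definition \eqref{eq:defZnpre} of $Z_n((G,G_1),\cdot\,,\varepsilon)$. For $\underline g\in G_n^*$ one has $S_{\underline g}(\varphi+c)(x)=S_{\underline g}\varphi(x)+nc$ and $|S_{\underline g}\varphi(x)-S_{\underline g}\psi(x)|\le n\,\|\varphi-\psi\|$ for every $x\in X$, hence
\[
Z_n((G,G_1),\varphi+c,\varepsilon)=e^{nc}\,Z_n((G,G_1),\varphi,\varepsilon)
\quad\text{and}\quad
Z_n((G,G_1),\varphi,\varepsilon)\le e^{n\|\varphi-\psi\|}\,Z_n((G,G_1),\psi,\varepsilon).
\]
Applying $\tfrac1n\log(\cdot)$, letting $n\to\infty$ and then $\varepsilon\to0$ yields (1), and exchanging $\varphi$ with $\psi$ yields (2). (Strong $\delta^*$-expansiveness together with the bounded distortion hypothesis makes the $\limsup$ in $n$ a genuine limit: combine Propositions~\ref{le:equivalenceP} and \ref{le:limitt} with Theorem~\ref{thm:vep-generator-entropy}; this fact will be used again for (3).)

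To prove (3), fix $\varepsilon>0$ small enough that $\varepsilon<\delta^*$ and $\varphi$ has bounded distortion, with constant $C$, on dynamical balls of radius $\varepsilon$. For every $n\ge1$ and every $\underline g\in G_n^*$ choose once and for all a maximal $(\underline g,n,\tfrac\varepsilon2)$-separated set $E_{\underline g,n}\subset X$, which is finite by compactness, and set
\[
P_n(t):=\frac1n\log\Big(\frac1{m^n}\sum_{\underline g\in G_n^*}\ \sum_{x\in E_{\underline g,n}}e^{\,t\,S_{\underline g}\varphi(x)}\Big),\qquad t\in\mathbb R.
\]
The reason for freezing the separated sets is that $P_n$ is now the logarithm of a finite positive linear combination of exponentials $t\mapsto e^{tb}$; such a log--sum--exp is real-analytic and convex on $\mathbb R$, so each $P_n$ is differentiable (indeed $C^\infty$) and convex. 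The plan is to show that $P_n\to P_{\text{top}}((G,G_1),t\varphi,X)$ uniformly on compact subsets of $\mathbb R$.

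Pointwise convergence follows from a squeeze. On one hand $E_{\underline g,n}$ is one particular $(\underline g,n,\tfrac\varepsilon2)$-separated set, so $P_n(t)\le\tfrac1n\log Z_n((G,G_1),t\varphi,\tfrac\varepsilon2)$. On the other hand $E_{\underline g,n}$ is $(\underline g,n,\tfrac\varepsilon2)$-spanning, so any $(\underline g,n,\varepsilon)$-separated set $E'$ injects into $E_{\underline g,n}$ via the map sending $y\in E'$ to a point of $E_{\underline g,n}$ at $d_{\underline g}$-distance $\le\tfrac\varepsilon2$ from it; since $\varphi$ has bounded distortion at scale $\varepsilon$ the potential $t\varphi$ satisfies it with constant $|t|C$, which gives $Z_n((G,G_1),t\varphi,\varepsilon)\le e^{|t|C}e^{nP_n(t)}$. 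Thus
\[
\tfrac1n\log Z_n((G,G_1),t\varphi,\varepsilon)-\tfrac{|t|C}{n}\ \le\ P_n(t)\ \le\ \tfrac1n\log Z_n((G,G_1),t\varphi,\tfrac\varepsilon2).
\]
Because $\varepsilon<\delta^*$, by Theorem~\ref{thm:vep-generator-entropy} (with the $\limsup$ upgraded to a limit as above) both outer terms converge to $P_{\text{top}}((G,G_1),t\varphi,X)$ as $n\to\infty$, so $P_n(t)\to P_{\text{top}}((G,G_1),t\varphi,X)$ for every $t\in\mathbb R$. A sequence of convex functions that converges pointwise on $\mathbb R$ converges uniformly on every compact interval; hence $t\mapsto P_{\text{top}}((G,G_1),t\varphi,X)$ is a uniform (on compacts) limit of the differentiable functions $P_n$, which is (3).

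Finally, the fastest way to the closing assertion is to combine (1) and (2): they give $|P_{\text{top}}((G,G_1),t\varphi,X)-P_{\text{top}}((G,G_1),s\varphi,X)|\le|t-s|\,\|\varphi\|$, so $t\mapsto P_{\text{top}}((G,G_1),t\varphi,X)$ is Lipschitz on $\mathbb R$ and therefore differentiable Lebesgue-almost everywhere; equivalently, being a uniform limit of the convex functions $P_n$ it is convex, and a convex function on an interval is differentiable off an at most countable set. I expect the only genuinely delicate point to be the passage from $\limsup$ to a true limit for $\tfrac1n\log Z_n((G,G_1),t\varphi,\varepsilon)$ at the fixed scale $\varepsilon<\delta^*$ — this is exactly where strong $\delta^*$-expansiveness and the cover-based subadditivity of Proposition~\ref{le:limitt} are indispensable; once that and the convexity of the finite-scale approximants $P_n$ are available, both the uniform convergence in (3) and the almost-everywhere differentiability come essentially for free.
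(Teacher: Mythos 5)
Your proof is correct, and for parts (2) and (3) it takes a genuinely different route from the paper. For (1) and (2) you work directly with the identity $S_{\underline g}(\varphi+c)=S_{\underline g}\varphi+nc$ and the bound $|S_{\underline g}\varphi-S_{\underline g}\psi|\le n\|\varphi-\psi\|$ at the level of $Z_n$; this needs only the $\limsup$ and is cleaner than the paper, which obtains (2) a posteriori from a derivative bound on its approximants and the mean value inequality. For (3) the paper takes $P_n(t)=\frac1n\log C_n((G,G_1),t\varphi,\varepsilon)$ with $C_n$ defined via an \emph{infimum over open covers}, declares these real analytic, and upgrades pointwise to uniform convergence by an equicontinuity estimate; the differentiability in $t$ of an infimum of analytic functions is actually the least transparent point of the paper's argument, and your device of freezing one maximal $(\underline g,n,\varepsilon/2)$-separated set per $\underline g$ neatly avoids it, since a finite log-sum-exp is genuinely real-analytic and convex, and convexity plus pointwise convergence yields uniform convergence on compacts for free (and, as a bonus, convexity of the limit, hence differentiability off a countable set, which is stronger than almost-everywhere differentiability). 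The price you pay is that the existence of the limit, which the paper gets directly from the subadditivity of the cover quantities (Proposition~\ref{le:limitt}), must be transferred back to separated sets; your squeeze does this correctly, but the lower bound $\liminf_n\frac1n\log Z_n((G,G_1),t\varphi,\varepsilon)\ge P_{\topp}((G,G_1),t\varphi,X)$ requires comparing $Z_n(\cdot,\varepsilon)$ with $C_n(\cdot,2\varepsilon)$ and then with $Z_n(\cdot,\varepsilon)$ again via \eqref{eq:equalities} and Theorem~\ref{thm:vep-generator-entropy}, so you should fix $\varepsilon$ with $2\varepsilon<\delta^*$ rather than merely $\varepsilon<\delta^*$ — a harmless adjustment, but worth stating.
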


\begin{proof}
We start by observing that  property (1) follows directly from the definition of the topological pressure.
By hypothesis let $\varepsilon_0>0$ be so that $\varphi,\psi\in BD(\varepsilon_0)$.
On the one hand, by Theorem \ref{thm:vep-generator-entropy} together with equation~\eqref{eq:equalities} it follows that
for any $0<\varepsilon <\delta^*$,
$$
P(\varphi):=P_{top}((G,G_1), t\varphi,X)
	=\limsup_{n\to\infty}\frac{1}{n}\log
		\left(
		\frac{1}{m^n}\sum_{{\underline g \in G_n^*}}
		\inf_{\mathcal{U}}\sum_{U\in\mathcal{U}}e^{tS_{\underline g}\varphi(U)}
		\right)
$$
where the infimum is taken over all $(\underline g, n, \varepsilon)$-open covers $\mathcal U$.
On the other hand, by Proposition~\ref{le:limitt} the right hand side above is actually a true limit.
Thus, for any $t\in \mathbb{R}$ we have that
\begin{align}\label{eqpres}
P_{\text{top}}((G,G_1),t \varphi,X)=\lim_{n\to\infty}\frac{1}{n}\log \left(\frac{1}{m^n}\sum_{{\underline g \in G_n^*}}
\inf_{\mathcal{U}}\sum_{U\in\mathcal{U}}e^{tS_{\underline g}\varphi(U)} \right),
\end{align}
where the infimum is taken over all $(\underline g, n,\varepsilon)$-covers $\mathcal U$ for
any $0<\varepsilon<\min\{\delta^*,\varepsilon_0\}$.
It means that the map $t\mapsto P_{\text{top}}((G,G_1),t \varphi,X)$
is a pointwise limit of real analytic functions.
We claim that the convergence is indeed uniform.
To prove this we will prove that the sequence of real functions $(P_n(t\varphi))_{n\ge 1}$ defined by
$$
t\mapsto P_n(t \varphi):=\frac{1}{n} \log C_n((G,G_1),t \varphi,\varepsilon)
$$
where
$$
C_n((G,G_1),t \varphi,\varepsilon)
	=\frac{1}{m^n} \sum_{{\underline g \in G_n^*}}\inf_{\mathcal{U}}
\sum_{U\in\mathcal{U}}e^{tS_{\underline g}\varphi(U)}
$$
 is equicontinuous in compact intervals, i.e., given $\varepsilon>0$ there exists $\delta>0$ such that
 if $|t_1-t_2|<\delta$ then $|P_n(t_1\varphi)-P_n(t_2\varphi)|<\varepsilon$, for every $n\in\mathbb{N}$.
 Let $\varepsilon>0$ be fixed and take $0<\delta<\varepsilon\slash \|\varphi\|$. Given $t_1,t_2$ arbitrary with
 $|t_1-t_2|<\delta$ it holds that
 \begin{align*}
 \displaystyle|P_n(t_1\varphi)-P_n(t_2\varphi)|&=
 \frac{1}{n}\log \Bigg[\frac{\sum_{{\underline g \in G_n^*}} \inf_{\mathcal{U}}
		\left\{ \sum_{U\in\mathcal{U}} e^{t_2 S_{\underline g} \varphi (U)} \right\}}
      {\sum_{{\underline g \in G_n^*}}  \inf_{\mathcal{U}}
		\left\{ \sum_{U\in\mathcal{U}} e^{t_1 S_{\underline g} \varphi (U)} \right\}}\Bigg]
                                             \\&\leq
\frac{1}{n}\log\left[\frac{e^{n\delta\|\varphi\|}\sum_{{\underline g \in G_n^*}}  \inf_{\mathcal{U}}
		\left\{ \sum_{U\in\mathcal{U}} e^{t_1 S_{\underline g} \varphi (U)} \right\}}{\sum_{{\underline g \in G_n^*}}\inf_{\mathcal{U}}
		\left\{ \sum_{U\in\mathcal{U}} e^{t_1 S_{\underline g} \varphi (U)} \right\}}\right]
\\&=\delta\|\varphi\| <\varepsilon.
 \end{align*}
 Hence the sequence is equicontinuous. Since $(P_n(t\varphi))_{n\in \mathbb{N}}$ converges
 pointwise, we have that the sequence converges uniformly on compact intervals and so
 $t\mapsto P_{\text{top}}((G,G_1),t \varphi,X)$ is a continuous function.
Furthermore, for any $n \in \mathbb N$ the function $t\mapsto P_n(\varphi+t\psi)$ is differentiable and
 \begin{align*}
 \left|\frac{dP_n(\varphi+t\psi)}{dt}\right|
 	\!=\!\frac{1}{C_n((G,G_1),t \varphi,\varepsilon) }
	\frac{1}{n}
 \Big(\frac{1}{m^n}\sum_{{\underline g \in G_n^*}}  \displaystyle\inf_{\mathcal{U}}
		\Big\{ \sum_{U\in\mathcal{U}}S_{\underline g}\psi(U) e^{S_{\underline g} (\varphi+t\psi) (U)} \Big\}\Big)
 \end{align*}
is bounded from above by $\|\psi\|$
(here the infimum is taken over all $(\underline g, n,\varepsilon)$-covers $\mathcal U$
as in \eqref{eqpres}).
This proves property (3). Moreover, by the mean value inequality
 $$
 |P_n(\varphi)-P_n(\psi)|\leq \sup_{0\leq t\leq 1}\left|\frac{dP_n(\varphi+t(\psi-\varphi))}{dt}\right|
 \leq\|\varphi-\psi\|.
 $$
Taking $n\to\infty$ we get that
 $
 |P_{\text{top}}((G,G_1), \varphi,X)-P_{\text{top}}((G,G_1), \psi,X)|\leq\|\varphi-\psi\|
 $
 and so the pressure function $P_{\text{top}}((G,G_1), \cdot,X)$ acting on the space of
 potentials with bounded distortion is Lipschitz continuous with Lipschitz constant equal to one.
This proves property (2).
The later implies that $t\mapsto P_{\text{top}}((G,G_1), t\varphi,X)$ is Lebesgue-almost everywhere
differentiable, which concludes the proof of the theorem.
\end{proof}

\subsection{Topological entropy and growth rate of periodic points}

In the remaining of this section we prove that the topological entropy is a lower bound for the
exponential growth rate of periodic points for semigroup of expanding maps. Clearly the theorems of the
previous section apply to the topological entropy since it corresponds to the constant to zero potential.

\begin{theorem}\label{thm:B}
Let $G$ be the semigroup generated by a set $G_1=\{g_1,\dots,g_k\}$ of uniformly
expanding maps on a Riemannian manifold $X$.
Then:
\begin{itemize}
\item[(a)] $G$ satisfies the periodic orbital specification property,
\item[(b)] periodic points $Per(G)$ are dense in $X$, and
\item[(c)] the mean growth of periodic points is bounded from below as
$$
0<h_{top}((G,G_1),X)
	\leq \limsup_{n\to\infty}
	\frac{1}{n}\log \Big(  \frac{1}{m^n} \sum_{{\underline g \in G_n^*}} \sharp \text{Fix}(\underline g) \Big).
$$
\end{itemize}
\end{theorem}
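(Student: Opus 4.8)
The plan is to establish the three assertions in order, since each relies on the preceding. For part (a), I would mimic the proof of Theorem~\ref{thm:expanding}, but keep track of the fact that the tracing orbit can be taken periodic. The key additional input is Lemma~\ref{le:uniform.exactness}: for every $\varepsilon>0$ there is $N(\varepsilon)$ with $\underline g_N(B(x,\varepsilon))=X$ for all $\underline g_N\in G^*_N$, together with Lemma~\ref{le:dynamical.images} which says that on scale $\varepsilon_0$ each $\underline g$ maps a dynamical ball diffeomorphically onto a genuine ball. Given the pieces of orbit $(x_j,n_j,\underline g_{n_j,j})$ and gaps $p_j\geq p(\varepsilon):=N(\varepsilon)$, one closes up the concatenation: after shadowing all $k$ pieces one appends one further connecting word $\underline h_{p_k}\in G^*_{p_k}$ from the last tracked point back to a preimage of $x_1$, and uses uniform exactness/contraction of inverse branches to find a genuine fixed point of the total word $\underline h_{p_k}\,\underline g_{n_k,k}\,\underline h_{p_{k-1}}\dots \underline h_{p_1}\,\underline g_{n_1,1}$ inside $B(x_1,\underline g_{n_1,1},\varepsilon)$ whose orbit shadows all the prescribed pieces. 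This is a routine contraction-mapping / nested-intersection argument once Lemmas~\ref{le:dynamical.images} and~\ref{le:uniform.exactness} are in hand.

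For part (b), density of $Per(G)$ in $X$ is an immediate consequence of (a) (or directly of Lemma~\ref{le:uniform.exactness}): given any $x\in X$ and $\varepsilon>0$, pick $\underline g_N\in G^*_N$ with $N=N(\varepsilon)$; since $\underline g_N$ restricted to a suitable dynamical ball around $x$ is a $\lambda^N$-contraction mapping that ball onto $B(\underline g_N(x),\varepsilon)\supset B(x,\varepsilon)\cap(\text{small ball})$, Banach's fixed point theorem yields a fixed point of $\underline g_N$ within $\varepsilon$ of $x$. Hence $Per(G_N)$ is $\varepsilon$-dense for every $\varepsilon$, giving (b).

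For part (c), the strict positivity $h_{top}((G,G_1),X)>0$ is Theorem~\ref{thm:entropy} (applicable since expanding semigroups have the strong orbital specification property by Theorem~\ref{thm:expanding}). The substantive inequality is
$h_{top}((G,G_1),X)\leq \limsup_n \frac1n\log\big(\frac1{m^n}\sum_{\underline g\in G_n^*}\sharp\,\mathrm{Fix}(\underline g)\big)$.
Here is where I expect the main obstacle. The idea is the standard one: fix $\varepsilon<\delta_0$, and for each $\underline g\in G_n^*$ compare a maximal $(\underline g,n,\varepsilon)$-separated set $E_{\underline g}$ with $\mathrm{Fix}(\underline g')$ for a slightly longer word $\underline g'$. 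By Lemma~\ref{le:dynamical.images} the dynamical balls $\{B(x,\underline g,\varepsilon)\}_{x\in E_{\underline g}}$ are mapped onto full $\varepsilon$-balls by $\underline g$; appending a word $\underline h\in G^*_{N(\varepsilon)}$ with $\underline h(B(\underline g(x),\varepsilon))=X$ for every $x$ produces, for each $x\in E_{\underline g}$, at least one fixed point of $\underline g\,\underline h$ inside the disjoint dynamical ball $B(x,\underline g,\varepsilon)$ (solve $\underline g\,\underline h(y)=y$ by contraction on the nested preimages). Distinct $x$'s give distinct fixed points since the dynamical balls are pairwise disjoint, so $\sharp\,\mathrm{Fix}(\underline g\,\underline h)\geq s(\underline g,n,\varepsilon)$. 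Summing over $\underline g\in G_n^*$, and noting that each product $\underline g\,\underline h$ lies in $G_{n+N(\varepsilon)}^*$ with bounded multiplicity (at most $m^{N(\varepsilon)}$ words $\underline g$ can produce the same $\underline g\,\underline h$, since $\underline h$ ranges over $G^*_{N(\varepsilon)}$), we get
$\sum_{\underline g\in G_{n+N}^*}\sharp\,\mathrm{Fix}(\underline g)\gtrsim \sum_{\underline g\in G_n^*} s(\underline g,n,\varepsilon)$
up to a factor $m^{N(\varepsilon)}$ which is harmless after dividing by $m^{n+N}$ and taking $\frac1n\log$. The delicate points to get right are: (i) ensuring we can choose a single connecting word $\underline h$ that works simultaneously for all $x\in E_{\underline g}$ (uniform exactness gives this), and (ii) the bookkeeping that the map $\underline g\mapsto \underline g\,\underline h$ from $G_n^*$ into $G_{n+N}^*$ is injective for fixed $\underline h$ and that we are not overcounting when we let $\underline g$ vary — this is where one must be careful that $G_n^*$ uses the free-monoid representations (via the coding $\iota$), so that concatenation is genuinely injective on representations. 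Letting $\varepsilon\to0$ at the end recovers $h_{top}((G,G_1),X)$ on the left and yields the claimed bound.
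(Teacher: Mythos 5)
Your plan follows the paper's proof essentially step for step: part (a) closes up the shadowing construction of Theorem~\ref{thm:expanding} with the contraction-of-inverse-branches argument to produce a fixed point of the full concatenated word, (b) is immediate from (a), and (c) injects a maximal separated set for each $\underline g\in G_n^*$ into the fixed-point set of the word extended by a connecting block of length $p(\varepsilon)$, with the same normalization bookkeeping. The only adjustment needed is in (c): for an $(\underline g,n,\varepsilon)$-separated set the radius-$\varepsilon$ dynamical balls need not be pairwise disjoint, so (as the paper does) one should take a $2\varepsilon$-separated set with radius-$\varepsilon$ balls, which costs nothing after letting $\varepsilon\to 0$.
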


\begin{proof}
Take $n\ge 1$ arbitrary and fixed.
It follows from Lemmas~\ref{le:dynamical.images} and ~\ref{le:uniform.exactness} that
there exists $\vep_0>0$ satisfying: for any $0<\vep \le \vep_0$ there exists a uniform
$N(\vep)\ge 1$ so that for any $x\in X$, any $\underline g_n \in G_n$ and $\underline g_N\in G^*_N$
with $N\ge N(\vep)$ it holds
$$
\underline g_N ( \underline g_n ( B(x,g,\vep)  ) ) = X.
$$
Consider $\vep>0$, $x_1, x_2, \dots, x_k \in X$, natural numbers $n_1, n_2, \dots, n_k$ and
group elements $\underline g_{{n_j},j} = g_{i_{n_j}, j} \dots g_{i_2,j} \, g_{i_1,j} \in G_{n_j}$ ($j=1\dots k$) be given and let us prove that $G$ satisfies the periodic orbital specification property, that is, there
exists a periodic orbit shadowing the previously defined pieces of orbit.
For that let us define $x_{k+1}=x_1$ and $\underline g_{n_{k+1}}=\underline g_1 \in G_{n_1}$.

By the proof of Theorem~\ref{thm:expanding},  there exists $p(\delta)\ge 1$ so that for any $p_1, \dots, p_k \ge p(\vep)$,  for $\underline h_{p_j} \in G^*_{p_j}$
we have that
$
\underline{h}_{p_i}(B(\underline{g}_{n_i}(x_i),\delta))=X.
$
Hence, there is a well defined inverse branch (which we denote by $\underline g_{n_i}^{-1}
\underline h_{p_i}^{-1}$ for simplicity) so that
$$
\underline g_{n_i}^{-1}  \underline h_{p_i}^{-1} ( B(x_{i+1},g_{n_{i+1}},\vep) )
	\subset  B(x_i,g_{n_i},\vep)
$$
 and  $\underline g_{n_i}^{-1}  \underline h_{p_i}^{-1} \mid_{B(x_{i+1},g,\vep)}$ is a
contraction. Since, $B(x_{k+1},g_{n_{k+1}},\vep) = B(x_{1},g_{n_{1}},\vep)$,
$$
\underline g_{n_1}^{-1}  \underline h_{p_{1}}^{-1} \dots \underline g_{n_k}^{-1}  \underline h_{p_{k}}^{-1} ( B(x_{k+1},g_{n_{k+1}},\vep) )
	\subset B(x_{1},g_{n_{1}},\vep)
$$
and the composition $\underline g_{n_1}^{-1}  \underline h_{p_{1}}^{-1} \dots \underline g_{n_k}^{-1}  \underline h_{p_{k}}^{-1}$ is a uniform contraction, then there exists a unique repelling fixed point for
$\underline h_{p_{k}} \underline g_{n_k} \dots  \underline h_{p_{1}} \underline g_{n_1}$ in
the dynamical ball $B(x_{1},g_{n_{1}},\vep)$. By construction, the fixed point for
$\underline h_{p_{k}} \underline g_{n_k} \dots  \underline h_{p_{1}} \underline g_{n_1}$
shadows the specified pieces of orbits.
This proves that $G$ satisfies the periodic orbital specification property in (a). Clearly (b) is a consequence
of the first claim (a).

Now, take $\underline{g}\in G^*_n$ and observe that for any maximal
$(\underline{g},n,2\vep)$-separated set $E$, the dynamical balls $\{ B(x, \underline{g}, \vep) : x\in E \}$ form a pairwise disjoint collection. Let $p(\vep)$
be given by the previous periodic orbital specification property.
For any arbitrary $\underline k \in G^*_{n+p(\vep)}$ one can write
$\underline k = \underline h_g \; \underline g$ for $\underline g\in G_{n}^*$ and
$\underline h_g\in G^*_{p(\vep)}$.
Notice that, proceeding as before,
$$
\underline k ( B(x, \underline g, \delta))
	= \underline{h}_g (B(\underline{g} (x),\delta))
	=X
$$
for every $x\in E$ and so there is a unique fixed point for $\underline k$ on the dynamical ball
$B(x, \underline g, \delta)$. This yields $\text{Fix} (\underline k) \ge \sharp E$
and so

\begin{align*}
\sum_{|\underline{k}|={n+p(\vep)}} \sharp\text{Fix} (\underline k)
	 \ge \sum_{|\underline{g}| =n}  \sharp\text{Fix} (\underline{h}_g \, \underline g)
	 \ge \sum_{|\underline{g}| =n} s(\underline{g}, n, 2\delta).
\end{align*}
Therefore,
\begin{align*}
 \limsup_{n\to\infty}\frac{1}{n}\log \Big(\frac1{m^n} \sum_{|\underline{k}|=n} \sharp\text{Fix} (\underline k) \Big)
	& =
	 \limsup_{n\to\infty}\frac{1}{n}\log \Big(\frac1{m^{n+p(\vep)}}\sum_{|\underline{k}|={n+p(\vep)}} \sharp	
	 \text{Fix} (\underline k) \Big) \\
 	& =  \limsup_{n\to\infty}\frac{1}{n}\log \Big(\frac1{m^n} \sum_{|\underline{k}|={n+p(\vep)}} \sharp\text{Fix} (\underline k) \Big)  \\
 &
 \ge \limsup_{n\to\infty}\frac{1}{n}\log\Big( \frac1{m^n} \sum_{|\underline{g}|=n} s(\underline{g}, n, 2\delta)\Big) .
 \end{align*}
Taking $\vep\to 0$ in the left hand side the previous inequality and recalling Theorem~\ref{thm:entropy}
this proves (c) and finishes the proof of the theorem.
\end{proof}

Some comments are in order. Firstly it is not hard to check that an analogous result holds for the notion
of entropy $h((G,G_1),X)$, leading to
$$
h((G,G_1),X) \leq \limsup_{n\to\infty}\frac{1}{n}\log\sharp Per(G_n).
$$
Secondly, since any expanding map satisfies the periodic specification property then
periodic measures are dense in the space of invariant probability measures
(see e.g. \cite[Proposition~21.8]{DGS}). Hence, given a finitely generated semigroup of
expanding maps $G$ it is clear that whenever the set $\mathcal M(G)$ of probability
measures invariant by every element $g\in G$ is non-empty then
the set of periodic measures
$$
\cP_{per}(G)
	= \bigcup_{n\ge 1} \bigcup_{\underline g\in G_n}
		\Big\{ \frac1n \sum_{j=0}^{n-1}  \delta_{\underline g_j (x)}:  x\in Fix(\underline g) \Big\}
$$
is dense in the set of probability measures $\mathcal{M}(G)$.
Finally, weighted versions of the previous theorem for potentials with bounded distortion
are also very likely to hold.

\section{Applications}\label{section of examples}

In this section we provide some classes of examples of semigroup actions that combine hyperbolicity and specification properties. We also provide some examples for which while we compare the notions
of topological entropy used here with some others previously introduced and available in the literature,
and discuss the relation between entropy, periodic points and specification properties.
\medskip

The following example illustrates that in the notion of specification some `linear independence condition'
on the set of generators must be assumed in order to obtain that the group has the specification property.

\begin{example}\label{ex:A}
Consider the integer valued matrix
\begin{equation}\label{eq:hyperbol}
A=\left(
                                     \begin{array}{cc}
                                       2 & 1 \\
                                       1 & 1 \\
                                     \end{array}
                                   \right),
\end{equation}
which induces a linear (topologically mixing) Anosov $f_A$ on $\mathbb{T}^2
=\mathbb R^2 / \mathbb Z^2$ that satisfies the specification property. Hence, the
$\mathbb Z$ action $\mathbb Z \times \mathbb{T}^2 \to \mathbb{T}^2$
given by $(n,x)\mapsto f_A^n(x)$ satisfies the specification property.

Now, take $B=A^{-2} \in SL(2,\mathbb Z)$ which  also induces a linear Anosov $f_B$
on the torus and satisfies the specification property.  Nevertheless, the $\mathbb Z^2$-action
$\mathbb Z^2\times \mathbb T^2 \to \mathbb T^2$ given by
$((m,n),x)\mapsto f_A^m( f_B^n(x))=f_A^{m-2n}(x)$
clearly does not satisfy the specification property
because every element in the (unbounded) subgroup
$\{(2n,n) : n\in\mathbb Z\} \subset \mathbb Z^2$ induces the identity map.
 This indicates that generators should be taken
in an irreducible way, that is, that there are $n_1,...,n_k\in \mathbb{Z}$ not all simultaneously zero
so that $g_1^{n_1}...g_k^{n_k}=Id_G$.
\end{example}

The next modification of the previous example illustrates that
the irreducibility of the generators in the sense that two generators $A$ and $B$ satisfy
$A^mB^n\not=Id$ for all $m,n\in\mathbb{Z}$ not simultaneously zero is not the unique obstruction.

\begin{example}\label{Subtil}
Let $A, B$ be the two matrices in $SL(4,\mathbb{Z})$ given by
$$
A=\left(
         \begin{array}{cc}
           \mathcal{A} & 0 \\
           \mathcal{I}_2 &  \mathcal{A}\\
         \end{array}
       \right)
\quad\text{and}\quad
B=\left(
         \begin{array}{cc}
           \mathcal{A} & 0 \\
           0 &  \mathcal{A}\\
         \end{array}
       \right)
, \quad \text{where} \quad
\mathcal{A}=\left(
         \begin{array}{cc}
           2 & 1 \\
           1 &  1\\
         \end{array}
       \right)
       \in SL(2,\mathbb{Z}),
$$
$\mathcal{I}_2\in \mathcal{M}_{2 \times 2}(\mathbb{Z})$  denotes the identity matrix and $0 \in \mathcal{M}_{2 \times 2}(\mathbb{Z})$ is the null matrix. It is not difficult to see that $A$ and $B$ are hyperbolic matrices
(hence the diffeomorphisms induced by $A$ and $B$ satisfy the specification property), these commute
but $B\not=A^m$ for all $m\in \mathbb{Z}$.
Consider the $\mathbb Z^2$-action $T:\mathbb Z^2 \times \mathbb{T}^4\to\mathbb{T}^4$
of $\mathbb Z^2$ on the torus $\mathbb{T}^4$ defined by
$((m,n),x) \mapsto A^m B^n(x)$.
Since the element
$$
A^{-1}B=\left(
         \begin{array}{cc}
           \mathcal{I}_2 & 0 \\
           \mathcal{I}_2 &  \mathcal{I}_2\\
         \end{array}
       \right)
$$
does not satisfy the specification property one can deduce from
Lemma~\ref{esp of elements} that this group action does not satisfy the specification property.
Similarly, it is not hard to check that
this group action does not satisfy neither of the orbital specification properties.
\end{example}

It follows from the discussion on the previous section that $C^1$-robust specification property implies that
the corresponding generators are uniformly hyperbolic and, in particular, the action is structurally stable.
Our twofold purpose in the next example is: (i) to exhibit broad families of  non-hyperbolic
smooth maps that satisfy orbital specification properties although generators do not necessarily have the
specification property;
(ii) present examples where the weak orbital specification property holds while the strong orbital property
does not.

\begin{example}\label{dxx}
 Let $f:\mathbb{S}^1\to \mathbb{S}^1$ be  a $C^1$-expanding map
 of the circle and $R_\alpha:\mathbb{S}^1\to \mathbb{S}^1$
 be the rotation of angle $\alpha$. Let $G$ be the semigroup
 generated by $G_1=\{id, f,R_\alpha\}$.
This example can be modified for the semigroup $G$ to be free (e.g. by taking a irrational rotation
and an expanding map with trivial centralizer c.f. discussion in the Example~\ref{ex:central}).

 \vspace{.15cm}
\noindent  \textit{ Claim 1:} The action induced by the semigroup $G$ on the unit circle $\mathbb{S}^1$ does not
 satisfy the strong orbital specification property.

\begin{proof}[Proof of Claim~1]
Take $\vep>0$ and $x_1,\neq x_2$ in the circle, $n_1=n_2=n \ge 1$ and the maps
$\underline g_{n_1}=f^{n_1}$ and $\underline g_{n_2}=f^{n_2}$. For any $p\ge 1$ take
$\underline h_p=R_\alpha^p=R_{\alpha p}$ the rotation of angle $\alpha p$.
If $n$ is large then the dynamical balls $B_f(x_1,n_1,\vep)$ and $ B_f(x_2,n_2,\vep)$ are disjoint and small.
In particular, there exists $p\ge 1$ so that $\underline h_p(B_f(x_1,n_1,\vep)) \cap B_f(x_2,n_2,\vep)
=\emptyset$. In particular the semigoup action $G$ on $\mathbb S^1$ does not satisfy the strong
specification orbital property.
\end{proof}

 \vspace{.15cm}
\noindent \textit{Claim 2:} The action induced by the semigroup $G$ on the unit circle $\mathbb{S}^1$ satisfies the weak orbital specification property.

\begin{proof}[Proof of Claim~2]
Since $f$ is $C^1$-expanding, by the proof of Lemma \ref{le:dynamical.images},
there exists $\vep_0>0$ so that for any $0<\vep \le \vep_0$, any $x\in X$ and any $ n\in \mathbb{N}$ it follows
that $f^n ( B_f(x,n,\vep)  ) = B(  f^n(x), \vep ).$ Moreover, there exists $N=N(\vep)\ge 1$ so that any ball
of radius $\vep$ is mapped onto $\mathbb S^1$ by $f^N$.
We can now prove the claim. Given $\vep>0$ take $p(\vep)=N(\vep)\ge 1$.
For any $p\ge p(\vep)$ let $\tilde G_{ p}\subset G_{p}^*$ denote the set of elements
$\underline h_{p} \in G_{p}^*$ for which the following holds:
given arbitrary points $x_1, \dots, x_k \in X$, any positive integers
$n_1, \dots, n_k \ge 1$, any elements
$\underline g_{n_j, j}= g_{i_{n_j}, j} \dots g_{i_2,j} \, g_{i_1,j} \in G_{n_j}$
and any elements $h_{p_j}\in \tilde G_{p_j}$ with $p_j \ge p(\delta)$
there exists $x\in X$ so that
$
d( \underline g_{{\ell}, 1} (x) \; , \; \underline g_{{\ell}, 1} (x_1) ) < \vep
$
for every $\ell =1 \dots n_1$ and
$$
d( \; \underline g_{{\ell}, j} \, \underline  h_{p_{j-1}} \, \dots \, \underline g_{{n_2}, 2} \, \underline h_{p_1} \, \underline g_{{n_1}, 1} (x) \; , \; \underline g_{{\ell}, j} (x_j) \;) <  \vep
$$
for every $j=2 \dots k$ and $\ell =1 \dots n_j$.
We claim that
$
\lim_{p\to+\infty} |\tilde G_p| / |G_p^*|=1.
$
We notice that
$
\underline g_{n_j, j} ( B(x, \underline g_{n_j, j}, \vep) ) = B(\underline g_{n_j, j}(x),\vep)
$
is a ball of radius $\vep$ for any $1\le j \le k$. So, if the expanding map is $f$ is combined at least $p(\vep)$
times in any way in the words $\underline h_p$ we get $\underline h_p(B(y,\vep))=\mathbb S^1$ for any
$y$ which clearly implies that $\underline h_p \in \tilde G_p$. Thus for any $p\ge p(\delta)$
$$
G_p^* \setminus \tilde G_p
	\subset \Big\{ \underline h_p=h_{i_p} \dots h_{i_2} h_{i_1} \in G_p : \sharp\{ 1\le j \le p: h_{i_j}=f\} < p(\vep) \Big\}.
$$
Clearly, for any $0<\gamma < 1$
\begin{equation}\label{eqH3}
\frac{ |G_p^* \setminus \tilde G_p| }{ 2^{\gamma p} }
	\le 2^{-\gamma p}
		\sum_{k=0}^{p(\delta)-1}
		\Big(\begin{array}{c}
		p \\ k
		 \end{array}
		 \Big)
	 \le
		 p(\vep)  \, 2^{- \gamma p} \, p^{p(\delta)}
		 \to 0
\end{equation}
as $p$ tends to infinity, which proves our claim.
\end{proof}
\noindent
Since the assumption (H) in Theorem~\ref{thm:entropy2} is a direct consequence of the previous equation~\eqref{eqH3} then we deduce that this semigroup action has positive topological entropy.
\end{example}

Clearly we can modify the previous strategy to deal with semigroups
with more generators or non-expanding maps.
Our next result illustrates that no generator of a semigroup need to have uniform
expansion for the semigroup to have weak orbital specification.  
We illustrate this fact with the following example.
\begin{example}
For any $\beta>0$, consider the interval map $f_\beta : [0,1] \to [0,1]$ given by
$$
f_\beta(x)
	=\begin{cases}
	\begin{array}{ll}
	x ( 1+ (2x)^\beta ) &, \text{if}\; x\in [0,\frac12] \\
	2x-1  & ,\text{if}\; x\in (\frac12,1]
	\end{array}
	\end{cases}
$$
also known as Maneville-Pomeau map. 
 Although $f_\beta$ is not continuous it induces 
a continuous and topologically mixing circle map $\tilde f_\beta$ taking $\mathbb S^1=[0,1]/\sim$ with the 
identification $0\sim 1$.  
Let $G$ be the semigroup generated by $G_1=\{id, \tilde f_\beta,R_\alpha\}$ where $R_\alpha$ is 
the rotation of angle $\alpha$. Clearly no element of $G_1$ is an expanding map. We claim 
that $G$ satisfies the weak orbital specification property. First we observe that since $R_\alpha$ is an isometry
then for every $x\in \mathbb S^1$, every $n\ge 1$, every $\underline g \in G_n^*$ and $\vep>0$ the dynamical ball $B(x,\underline g,\vep)$ 
satisfies 
$\underline g (\, B(x,\underline g,\vep) \,) = B(\underline g(x), \vep)$. Second, although $\tilde f_\beta$ is not uniformly
expanding it satisfies the following scaling property: 
$
\diam(\tilde f_\beta ([0,\delta]))  
	\ge \frac\delta2 + \frac\delta2 [1+ (1+\beta) \delta^\beta] = c_\delta \diam([0,\delta])
$
and 
$
\diam(\tilde f_\beta (I)) 
	\ge \sigma_\delta \diam (I)
$
for every ball $I \subset \mathbb S^1$ of diameter larger or equal to $\delta$,
where $c_\delta:=(1+ \delta (1+\beta) \delta^\beta)>1$
(here we use $f_\beta'(x)=1+ (1+\beta)2^\beta x^\beta \ge 1+ (1+\beta) \delta^\beta$ for every $x\in [\frac\delta2,\frac12]$ 
and $f_\beta'(x)=2$ for every $x\in (\frac12,1]$). Using the previous expression recursively, we deduce that there exists $N_\vep>0$
so that 
$$
\underline g (B(x,\vep)) = \mathbb S^1
$$
for every $x\in \mathbb S^1$, and every $\underline g:= g_{i_n} \dots g_{i_1}\in G_n^*$ such that $\sharp\{1\le j \le n : g_{i_j} =\tilde f_\beta\} \ge N_\vep$. The proof of the weak orbital specification property follows as in Example~\ref{exx}.
\end{example}

Our next purpose is to provide an example of a semigroup with exponential growth that
is not a free semigroup but still satisfy the assumptions of Theorem~\ref{thm:diff}.

\begin{example}\label{ex:central}
Let $X=\mathbb S^1$ be the circle and consider the expanding maps on $S^1$ given by
$g_1(x)=2x \; (\!\!\! \mod 1)$, that $g_2(x)=3x \; (\!\!\! \mod 1)$. It is clear that these maps comute (that is,
$g_1\circ g_2=g_2\circ g_1$) and that $g_1^k \neq g_2^\ell$ for every $k, \ell \in\mathbb Z_+$ (since $2$ and
$3$ are relatively prime). Now, consider another $C^1$-expanding map $g_3$ such that its centralizer
$Z(g_3)$ is trivial, meaning $$ Z(g_3):=\{ h: \mathbb S^1 \to \mathbb S^1 \text{ expanding } \colon  h\circ g_3= g_3 \circ h \}= \{ g_3^\ell \colon \ell \in\mathbb Z_+\}.$$ In particular the subgroup generated by $g_2$ and $g_1$ is disjoint from $Z(g_3)$. In other words, $g_3 \circ g_2^\ell \circ g_1^k \neq  g_2^\ell \circ g_1^k \circ g_3$ for every $\ell, k \in \mathbb Z_+$. The existence of such $g_3$ is garanteed by \cite{Ar}. Let $G$ be the semigroup of expanding maps with generators $G_1=\{g_1, g_2, g_3\}$. By construction, the subgroup $\tilde G$ of $G$ generated by $\tilde G_1=\{g_1, g_3\}$ is a free semigroup then
$$\lim_{n\to\infty} \frac1n \log {|G_n|} \ge \log 2 >1$$ and the semigroup has exponential growth. Since the generators do not have finite order then any elements $\underline g\in G_n$ is a concatenation
$\underline g = g_{i_n} \dots g_{i_1}$ with $g_{i_j} \in G_1$.
By commutativity, all concatenations of $j$ elements $g_1$ and $k$ elements $g_2$ coincide with the
expanding map $g_1^j \, g_2^k$ and consequently there are exactly $n+1$ elements in $G_n$ obtained
as concatenations of the elements $g_1$ and $g_2$.
This semigroup has exponential growth and is not abelian but still satisfies the conditions of
Theorem~\ref{thm:diff} for every H\"older continuous potential $\varphi: X \to\mathbb R$ and, in particular,
the pressure function $t\mapsto P_{\text{top}}((G,G_1),t \varphi,X)$ is differentiable
Lebesgue-almost everywhere.
\end{example}

In what follows we shall provide a simple example of a $\mathbb Z^d$-semigroup action
where we can already discuss the relation between the notion of topological
entropy that we introduced in comparison with some of the previous ones.
We focus on the case of semigroups of expanding maps for simplicity of computations
while we notice that an example of actions of total automorphisms as considered in
Example~\ref{ex:A} could be constructed analogously.

\begin{example}\label{ex:comparacao}
Let $X=\mathbb S^1$ be the circle and the $\mathbb Z^3$-group action
$T : \mathbb Z^3 \times \mathbb S^1 \to \mathbb S^1$
defined by $((m,n,k), x) \mapsto g_1^m g_2^{n} g_3^k (x)$, where
$g_1(x)=2x \; (\!\!\! \mod 1)$, $g_2(x)=3x \; (\!\!\! \mod 1)$ and $g_3(x)=5x \; (\!\!\! \mod 1)$
are commuting expanding maps of the circle.
By commutativity and the fact that the numbers $2,3,5$ are relatively prime
it is easy to check that
$
|G_n| = (n+1) (n+2) / 2.
$
First we shall compute the topological pressure as considered by Bis in~\cite{Bis}.
If $s(n,\delta)$ denotes the number of $(n,\delta)$-separated sets by $G$
the topological entropy in \cite{Bis} is defined by
\begin{equation}\label{defBis}
\lim_{\delta\to 0} \limsup_{n\to\infty} \frac1{|G_{n-1}|} \log s(n,\delta).
\end{equation}
In our context, for any $\delta>0$
$$
\limsup_{n\to\infty} \frac1{|G_{n-1}|} \log s(n,\delta)
	\le \limsup_{n\to\infty}  \frac{2}{n^2} \log (5^n)
	= 0
$$
proving that the entropy in \eqref{defBis} is zero.
For the sake of completeness let us mention that it is  remarked in \cite{Bis} that having positive topological entropy with this definition does not depend on the generators.
Ruelle~\cite{Ruelle} considered a slightly different but similar notion of topological entropy but that does coincide with
\eqref{defBis} in this context.

Let us now proceed to compute the notion of topological entropy considered by Ghys, Langevin, Walczak
~\cite{GLW} and Bis~\cite{BisII}. According to their definition entropy is computed as
$$
\lim_{\delta\to 0} \limsup_{n\to\infty}
	\frac1n \log s(n,\delta)
	= \log 5
$$
and it measures the \emph{maximal} entropy rate in the semigroup.
Finally we observe that it follows from \cite{Bufetov2} that the topological entropy of the semigroup
action, according to Definition~\ref{def:entropyB}, in the case the generators are expanding is given by
\begin{equation*}
h_{top}((G,G_1),X)
	= \log \big( \frac{\deg g_1 + \deg g_2 + \deg g_3}{3} \big)
	= \log \big( \frac{10}{3} \big)
	>0.
\end{equation*}
Finally let us mention that this semigroup action satisfies the strong orbital specification properties
and, consequently,  it follows from Theorems~\ref{thm:entropypts} and ~\ref{thm:entropypts2}
that every point in the circle is an entropy point with respect to both entropy notions.
\end{example}

\vspace{.5cm}

\subsection*{Acknowledgements}
F.G. is supported by BREUDS and P.V. is supported by a postdoctoral fellowship by CNPq-Brazil and are grateful to Faculdade de
Ci\^encias da Universidade do Porto for the excellent research conditions.


\begin{thebibliography}{10}

\bibitem{Ar}
Carlos Arteaga.
\newblock Centralizers of expanding maps on the circle.
\newblock {\em Proc. Amer. Math. Soc.}, 114(1):263--267, 1992.

\bibitem{SigBauer}
Walter Bauer and Karl Sigmund.
\newblock Topological dynamics of transformations induced on the space of
  probability measures.
\newblock {\em Monatsh. Math.}, 79:81--92, 1975.

\bibitem{Bis}
Andrzej Bi{\'s}.
\newblock Partial variational principle for finitely generated groups of
  polynomial growth and some foliated spaces.
\newblock {\em Colloq. Math.}, 110(2):431--449, 2008.

\bibitem{BisII}
Andrzej Bi{\'s}.
\newblock An analogue of the variational principle for group and pseudogroup
  actions.
\newblock {\em Ann. Inst. Fourier (Grenoble)}, 63(3):839--863, 2013.

\bibitem{BiU}
Andrzej Bi{\'s} and Mariusz Urba{\'n}ski.
\newblock Some remarks on topological entropy of a semigroup of continuous
  maps.
\newblock {\em Cubo}, 8(2):63--71, 2006.

\bibitem{BiW}
Andrzej Bi{\'s} and Pawe{\l} Walczak.
\newblock Entropy of distal groups, pseudogroups, foliations and laminations.
\newblock {\em Ann. Polon. Math.}, 100(1):45--54, 2011.

\bibitem{BL}
A.~M. Blokh.
\newblock Decomposition of dynamical systems on an interval.
\newblock {\em Uspekhi Mat. Nauk}, 38(5(233)):179--180, 1983.

\bibitem{Bowen}
Rufus Bowen.
\newblock Periodic points and measures for {A}xiom {$A$} diffeomorphisms.
\newblock {\em Trans. Amer. Math. Soc.}, 154:377--397, 1971.

\bibitem{Bo75}
Rufus Bowen.
\newblock {\em Equilibrium states and the ergodic theory of {A}nosov
  diffeomorphisms}, volume 470 of {\em Lecture Notes in Mathematics}.
\newblock Springer-Verlag, Berlin, revised edition, 2008.
\newblock With a preface by David Ruelle, Edited by Jean-Ren{\'e} Chazottes.

\bibitem{BR75}
Rufus Bowen and David Ruelle.
\newblock The ergodic theory of {A}xiom {A} flows.
\newblock {\em Invent. Math.}, 29(3):181--202, 1975.

\bibitem{Bufetov}
A.~Bufetov.
\newblock Topological entropy of free semigroup actions and skew-product
  transformations.
\newblock {\em J. Dynam. Control Systems}, 5(1):137--143, 1999.

\bibitem{Bufetov2}
A.~I. Bufetov.
\newblock Ergodic theorems for actions of several mappings.
\newblock {\em Uspekhi Mat. Nauk}, 54(4(328)):159--160, 1999.

\bibitem{CKN}
Grant Cairns, Alla Kolganova, and Anthony Nielsen.
\newblock Topological transitivity and mixing notions for group actions.
\newblock {\em Rocky Mountain J. Math.}, 37(2):371--397, 2007.

\bibitem{ChuLi}
Nhan-Phu Chung and Hanfeng Li.
\newblock Homoclinic groups, {IE} groups, and expansive algebraic actions.
\newblock {\em Invent. Math.}, 199(3):805--858, 2015.

\bibitem{CRV}
Maria~Pires de~Carvalho, Fagner~B. Rodrigues, and Paulo Varandas.
\newblock Semigroup actions of expanding maps.
\newblock {\em preprint}.

\bibitem{Harpe}
Pierre de~la Harpe.
\newblock {\em Topics in geometric group theory}.
\newblock Chicago Lectures in Mathematics. University of Chicago Press,
  Chicago, IL, 2000.

\bibitem{DGS}
Manfred Denker, Christian Grillenberger, and Karl Sigmund.
\newblock {\em Ergodic theory on compact spaces}.
\newblock Lecture Notes in Mathematics, Vol. 527. Springer-Verlag, Berlin-New
  York, 1976.

\bibitem{EKW}
A.~Eizenberg, Y.~Kifer, and B.~Weiss.
\newblock Large deviations for {${\bf Z}^d$}-actions.
\newblock {\em Comm. Math. Phys.}, 164(3):433--454, 1994.

\bibitem{Fish}
A.~Yu. Fishkin.
\newblock An analogue of the {M}isiurewicz-{P}rzytycki theorem for some
  mappings.
\newblock {\em Uspekhi Mat. Nauk}, 56(1(337)):183--184, 2001.

\bibitem{Fried}
Shmuel Friedland.
\newblock Entropy of graphs, semigroups and groups.
\newblock In {\em Ergodic theory of {${\bf Z}^d$} actions ({W}arwick,
  1993--1994)}, volume 228 of {\em London Math. Soc. Lecture Note Ser.}, pages
  319--343. Cambridge Univ. Press, Cambridge, 1996.

\bibitem{GLW}
{\'E}.~Ghys, R.~Langevin, and P.~Walczak.
\newblock Entropie g\'eom\'etrique des feuilletages.
\newblock {\em Acta Math.}, 160(1-2):105--142, 1988.

\bibitem{Ghys}
{\'E}tienne Ghys.
\newblock Groups acting on the circle.
\newblock {\em Enseign. Math. (2)}, 47(3-4):329--407, 2001.

\bibitem{R.I.Grigorchuk}
R.~I. Grigorchuk.
\newblock Degrees of growth of finitely generated groups and the theory of
  invariant means.
\newblock {\em Izv. Akad. Nauk SSSR Ser. Mat.}, 48(5):939--985, 1984.

\bibitem{Sad}
P.~R. Grossi~Sad.
\newblock A {${\bf Z}\times {\bf Z}$} structurally stable action.
\newblock {\em Trans. Amer. Math. Soc.}, 260(2):515--525, 1980.

\bibitem{Katok}
Anatole Katok.
\newblock Fifty years of entropy in dynamics: 1958--2007.
\newblock {\em J. Mod. Dyn.}, 1(4):545--596, 2007.

\bibitem{KMa}
Piotr Ko{\'s}cielniak and Marcin Mazur.
\newblock Chaos and the shadowing property.
\newblock {\em Topology Appl.}, 154(13):2553--2557, 2007.

\bibitem{LiSc}
Douglas Lind and Klaus Schmidt.
\newblock Symbolic and algebraic dynamical systems.
\newblock In {\em Handbook of dynamical systems, {V}ol.\ 1{A}}, pages 765--812.
  North-Holland, Amsterdam, 2002.

\bibitem{MW}
Dongkui Ma and Min Wu.
\newblock Topological pressure and topological entropy of a semigroup of maps.
\newblock {\em Discrete Contin. Dyn. Syst.}, 31(2):545--557, 2011.

\bibitem{MW14}
R.~Miles and T~Ward.
\newblock Directional uniformities, periodic points and entropy.
\newblock {\em preprint, http://arxiv.org/abs/1411.5295}.

\bibitem{OT11}
Krerley Oliveira and Xueting Tian.
\newblock Non-uniform hyperbolicity and non-uniform specification.
\newblock {\em Trans. Amer. Math. Soc.}, 365(8):4371--4392, 2013.

\bibitem{PS}
C.-E. Pfister and W.~G. Sullivan.
\newblock Large deviations estimates for dynamical systems without the
  specification property. {A}pplications to the {$\beta$}-shifts.
\newblock {\em Nonlinearity}, 18(1):237--261, 2005.

\bibitem{RuelleS}
D.~Ruelle and Ya.~G. Sina{\u\i}.
\newblock From dynamical systems to statistical mechanics and back.
\newblock {\em Phys. A}, 140(1-2):1--8, 1986.
\newblock Statphys 16 (Boston, Mass., 1986).

\bibitem{Ruelle}
David Ruelle.
\newblock Statistical mechanics on a compact set with {$Z^{v}$} action
  satisfying expansiveness and specification.
\newblock {\em Trans. Amer. Math. Soc.}, 187:237--251, 1973.

\bibitem{SSY10}
Kazuhiro Sakai, Naoya Sumi, and Kenichiro Yamamoto.
\newblock Diffeomorphisms satisfying the specification property.
\newblock {\em Proc. Amer. Math. Soc.}, 138(1):315--321, 2010.

\bibitem{Sc15}
F.~M. Schneider.
\newblock Topological entropy of continuous actions of compactly generated
  groups.
\newblock {\em preprint, http://arxiv.org/abs/1502.03980}.

\bibitem{S}
Karl Sigmund.
\newblock On dynamical systems with the specification property.
\newblock {\em Trans. Amer. Math. Soc.}, 190:285--299, 1974.

\bibitem{Si72}
Ja.~G. Sina{\u\i}.
\newblock Gibbs measures in ergodic theory.
\newblock {\em Uspehi Mat. Nauk}, 27(4(166)):21--64, 1972.

\bibitem{Thompson}
Daniel~J. Thompson.
\newblock Irregular sets, the {$\beta$}-transformation and the almost
  specification property.
\newblock {\em Trans. Amer. Math. Soc.}, 364(10):5395--5414, 2012.

\bibitem{Var10}
Paulo Varandas.
\newblock Non-uniform specification and large deviations for weak {G}ibbs
  measures.
\newblock {\em J. Stat. Phys.}, 146(2):330--358, 2012.

\bibitem{SVY13}
Paulo Varandas, N.~Sumi, and Kenichiro Yamamoto.
\newblock Partial hyperbolicity and specification.
\newblock {\em Proc. Amer. Math. Soc.}, 2015,to appear.

\bibitem{SVY15}
Paulo Varandas, N.~Sumi, and Kenichiro Yamamoto.
\newblock Specification and partial hyperbolicity for flows.
\newblock {\em Dyn. Syst.}, 2015,to appear.

\bibitem{Ya09}
Kenichiro Yamamoto.
\newblock On the weaker forms of the specification property and their
  applications.
\newblock {\em Proc. Amer. Math. Soc.}, 137(11):3807--3814, 2009.

\bibitem{CZ14}
Dongmei Zheng and Ercai Chen.
\newblock Bowen entropy for action of amenable groups.
\newblock {\em preprint, http://arxiv.org/abs/1410.4645}.

\end{thebibliography}
\end{document}